\documentclass[a4paper]{article}

\newcommand{\COLORON}{1}
\newcommand{\NOTESON}{0}
\newcommand{\Debug}{0} 

\usepackage{amsthm,amssymb,amsmath,enumerate,graphicx,epsf}
\usepackage[numbers,sort&compress]{natbib}
\usepackage[normalem]{ulem}
\usepackage{bbold}
\usepackage{mathtools,enumitem}
\usepackage{authblk}
\usepackage[percent]{overpic}
\usepackage{hyperref}
\RequirePackage{latexsym} \RequirePackage{amsthm}
\RequirePackage{amsmath}
\usepackage{authblk}
\usepackage{tikz}
\usepackage[dvipsnames]{xcolor}
\usepackage{subcaption}
\usepackage{geometry}
\usetikzlibrary{arrows.meta, positioning, shapes.geometric}
\usetikzlibrary{positioning, calc, shapes.misc, fit}

\usepackage{enumerate}
\RequirePackage{amssymb} \RequirePackage{makeidx}
\usepackage{dsfont}
\usepackage{mathrsfs}
\usepackage[nameinlink]{cleveref}
\usepackage[T1]{fontenc}

\usepackage{amsthm,amssymb,amsmath,bbm,enumerate,graphicx,epsf,stmaryrd,accents}

\usepackage{authblk}

\newcommand{\comment}[1]{}
\newcommand{\COMMENT}[1]{}

\definecolor{darkgray}{rgb}{0.3,0.3,0.3}
\newcommand{\defi}[1]{{\color{darkgray}\emph{#1}}}

\comment{
	\begin{lemma}\label{}	
\end{lemma}
\begin{proof}

\end{proof}

\begin{theorem}\label{}
\end{theorem} 
\begin{proof} 	

\end{proof}

}

\newtheorem{proposition}{Proposition}[section]
\newtheorem{definition}[proposition]{Definition}
\newtheorem{theorem}[proposition]{Theorem}
\newtheorem{corollary}[proposition]{Corollary}
\newtheorem*{nocorollary}{Corollary}
\newtheorem{lemma}[proposition]{Lemma}
\newtheorem{observation}[proposition]{Observation}
\newtheorem{conjecture}{{Conjecture}}[section]

\newtheorem{problem}[conjecture]{{Problem}}

\newtheorem{question}[conjecture]{{Question}}

\newtheorem{examp}[proposition]{Example}

\theoremstyle{definition}
\newtheorem{remark}[proposition]{Remark}

\theoremstyle{definition}

\newcommand{\kreis}[1]{\mathaccent"7017\relax #1}

\newcommand{\FIG}{0}

\ifnum \NOTESON = 1 \newcommand{\note}[1]{ 

\hspace*{-30pt}
	{\color{blue}  NOTE: \color{Turquoise}{\small  \tt \begin{minipage}[c]{1.1\textwidth}  #1 \end{minipage} \ignorespacesafterend }} 
	
	}
\else \newcommand{\note}[1]{} \fi

\newcommand{\afsubm}[1]{ \ifnum \Debug = 1 {\mymargin{#1}}
\fi} 

\ifnum \Debug = 1 
\else  \fi

\ifnum \FIG = 1 \newcommand{\fig}[1]{Figure ``{#1}''}
\else \newcommand{\fig}[1]{Figure~\ref{#1}} \fi

\ifnum \FIG = 1 
\else  \fi

\ifnum \Debug = 1 \usepackage[notref,notcite]{showkeys}
\fi

\ifnum \COLORON = 0 \renewcommand{\color}[1]{}
\fi

\newcommand{\N}{\ensuremath{\mathbb N}}
\newcommand{\R}{\ensuremath{\mathbb R}}

\newcommand{\Z}{\ensuremath{\mathbb Z}}

\newcommand{\cc}{\ensuremath{\mathcal C}}

\newcommand{\ci}{\ensuremath{\mathcal I}}

\newcommand{\cp}{\ensuremath{\mathcal P}}

\newcommand{\cs}{\ensuremath{\mathcal S}}

\newcommand{\cx}{\ensuremath{\mathcal X}}

\newcommand{\eps}{\ensuremath{\epsilon}}

\newcommand{\sm}{\backslash}

\newcommand{\isom}{\cong}

\makeatletter
\DeclareRobustCommand{\cev}[1]{%
  \mathpalette\do@cev{#1}%
}
\newcommand{\do@cev}[2]{%
  \fix@cev{#1}{+}%
  \reflectbox{$\m@th#1\vec{\reflectbox{$\fix@cev{#1}{-}\m@th#1#2\fix@cev{#1}{+}$}}$}%
  \fix@cev{#1}{-}%
}
\newcommand{\fix@cev}[2]{%
  \ifx#1\displaystyle
    \mkern#23mu
  \else
    \ifx#1\textstyle
      \mkern#23mu
    \else
      \ifx#1\scriptstyle
        \mkern#22mu
      \else
        \mkern#22mu
      \fi
    \fi
  \fi
}

\makeatother

\newcommand{\pth}[2]{\ensuremath{#1}\text{--}\ensuremath{#2}~path}

\newcommand{\g}{\ensuremath{G\ }}
\newcommand{\G}{\ensuremath{G}}

\newcommand{\Cg}{Cayley graph}

\newcommand{\Lr}[1]{Lemma~\ref{#1}}

\newcommand{\Tr}[1]{Theorem~\ref{#1}}
\newcommand{\Trs}[1]{Theorems~\ref{#1}}
\newcommand{\Sr}[1]{Section~\ref{#1}}

\newcommand{\Prr}[1]{Pro\-position~\ref{#1}}
\newcommand{\Prb}[1]{Problem~\ref{#1}}
\newcommand{\Cr}[1]{Corollary~\ref{#1}}
\newcommand{\Cnr}[1]{Con\-jecture~\ref{#1}}
\newcommand{\Or}[1]{Observation~\ref{#1}}

\newcommand{\Qr}[1]{Question~\ref{#1}}
\newcommand{\Rr}[1]{Remark~\ref{#1}}

\renewcommand{\iff}{if and only if}
\newcommand{\fe}{for every}

\newcommand{\st}{such that}

\newcommand{\obda}{without loss of generality}

\newcommand{\wrt}{with respect to}

\newcommand{\labequ}[2]{ \begin{equation} \label{#1} #2 \end{equation} } 
 
\newcommand{\labtequ}[2]{
 \begin{equation} \label{#1} 	\begin{minipage}[c]{0.9\textwidth}  #2 \end{minipage} \ignorespacesafterend \end{equation} }

\newcommand{\mymargin}[1]{
 \ifnum \Debug = 1
  \marginpar{%
    \begin{minipage}{\marginparwidth}\small%
      \begin{flushleft}%
        {\color{blue}#1}%
      \end{flushleft}%
   \end{minipage}%
  }%
 \fi
}%

\newcommand{\extras}[1]{
 \ifnum \Debug = 1
\section{Extras} #1
 \fi
}%

\newcommand{\mySection}[2]{}

\newcommand{\cococ}{coarsely contraction closed}

\newcommand{\Id}{\ensuremath{\mathrm{Id}}}

\newcommand{\Hd}{\ensuremath{d^{HD}}}

\newcommand{\rig}{region-intersection-graph}
\newcommand{\ing}{intersection graph}
\newcommand{\cig}{cover-intersection-graph}
\newcommand{\scig}{subdivision-cover-intersection-graph}
\newcommand{\CIG}{\ensuremath{\mathcal{CIG}}}
\newcommand{\SCIG}{\ensuremath{\mathcal{SCIG}}}

\newcommand{\QI}{\ensuremath{\mathcal{QI}}}

\newcommand{\Forb}{\mathrm{Forb}}
\newcommand{\Find}{\Forb_{\mathrm{ind}}}

\newcommand{\qy}{quasi-isometry}
\newcommand{\qic}{quasi-isometric}

\begin{document}

\title{Quasi-isometries, contractions, and intersection graphs}

\author[1]{Agelos Georgakopoulos\thanks{Supported by EPSRC grant  EP/V009044/1. AG thanks the department of mathematics of the NKUA for hospitality
while some of this work was carried out.}}
\author[2]{Chiara Molinari\thanks{Supported by the Warwick Mathematics Institute CDT and 
acknowledges funding from the University of Warwick.}}

\affil[1,2]{  {Mathematics Institute}\\ {University of Warwick}\\  {CV4 7AL, UK}}

\date{\today}
\maketitle

\begin{abstract}
We prove that a graph $G$ is quasi-planar ---i.e.\ quasi-isometric to a planar graph--- if and only if it can be obtained by iterating the following two operations a bounded number of times: a) subdividing each edge into a path of bounded length, and b) taking the intersection graph of a family of connected subgraphs covering $G$. This applies  both to infinite graphs, and to families of finite graphs with uniform constants.

The backward implication relies on, and generalises, a deep result of Davies, partly proved independently by Chang, Conroy, Tan \& Zheng, saying that every string graph is quasi-planar. The forward implication requires new ideas. 

As a byproduct of our proofs, we  deduce that every contraction minor of a quasi-planar graph is quasi-planar. Moreover, if \g admits a tree-decomposition with adhesions of bounded diameter and quasi-planar induced bags, then \g is itself quasi-planar.

Our results apply to other graph classes as well, and we offer various tools for understanding quasi-isometries as well as bi-Lipschitz equivalences between graphs. 
\end{abstract}

{\bf{Keywords:} } coarse graph theory, quasi-isometry, quasi-planar, bi-Lipschitz,\\ intersection graph, string graph, contraction. \\ 

{\bf{MSC 2020 Classification:}} 05C10, 05C62, 05C12, 51F30, 05C63, 05C76. 

\maketitle

\section{Introduction}



There is widespread interest in studying graph properties that are invariant under quasi-isometries, as such properties are stable under local perturbations and capture the large-scale geometric structure. This parallels ideas in Geometric Group Theory following Gromov's paradigm \cite{GroAsyInv}, whereby quasi-isometries between groups play a central role. More broadly, there is interest in obtaining metric variants of graph-theoretic results. See e.g.\ \cite{ADGSmallCounterexamples,AlbGeoBlo,AJKW,BerSeyBou,BlPiPrCoa,BBEGLPS,BLPPSep,DGHLMGallai,FujPapCoa,FujPapAsy,LiuWeakCoarseMenger,McMFat,NgScSeAsyI,NgScSeAsyII} for a sample of the fast-growing literature in this direction, and \cite{GeoPapMin} for a discussion of the motivation. 

Of particular interest, both from the graph-theoretic \cite{BlPiPrCoa, CCTZstring,DaviesStringGraphsQuasiPlanar,NgScSeAsyVI}, as well as the group-theoretic  perspective \cite{EsGiCoa,McMAcc,McMFat,MaMaSpAlm}, is the class of \defi{quasi-planar} graphs, i.e.\  graphs that are $(M,A)$-quasi-isometric to a planar graph for fixed constants $M,A$. Well known-open problems ask whether such graphs admit bounded-distortion embeddings into $\ell_1$ \cite{GNRS}, whether $M$ can always be chosen to be 1 by changing the target planar graph \cite{NgScSeAsyII,GeoVigCoa}, and whether quasi-planarity is equivalent to forbidding the Kuratowski graphs as asymptotic minors \cite{GeoPapMin}.

This paper revolves around quasi-planar graphs, and provides a purely combinatorial characterization involving intersection graphs. Along the way, we will develop tools for studying quasi-isometry and bi-Lipschitz equivalence in much broader setups, some of which are of independent interest. We will now review some of these results and techniques, leading up to our aforementioned characterization.

\medskip
Our starting point is the following deep result, recently proved by Davies \cite{DaviesStringGraphsQuasiPlanar} and independently by Chang et al \cite{CCTZstring}.\footnote{Chang et al \cite{CCTZstring} only consider finite graphs, but Davies' proof applies to countably infinite ones.} 

\begin{theorem} [{\cite{CCTZstring,DaviesStringGraphsQuasiPlanar}}] \label{Davies}
Every countable string graph is $(M,A)$-\qic\ to a planar graph for universal constants $M,A$.\footnote{For our purposes, a \defi{string graph} can be defined as an intersection of a family of connected subgraphs of a planar graph. The definition of string graph in {\cite{CCTZstring,DaviesStringGraphsQuasiPlanar}} is more general, but it is easy to see that ours is a special case.}
\end{theorem}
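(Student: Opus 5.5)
Since the statement is quoted from Davies and from Chang et al., the only thing left to check here is the footnote's reduction: an intersection graph of connected subgraphs of a planar graph is a string graph in the sense of those papers. After that, the universal constants $(M,A)$ come directly from \cite{CCTZstring,DaviesStringGraphsQuasiPlanar}, with Davies covering the countably infinite case. I will not try to re-prove the deep quasi-isometry result. If I had to sketch its structure, the route would be to build a planar graph from a plane drawing of the strings, with one vertex per string and extra vertices for crossings and regions. One would then show that distances in the string graph are comparable, up to a multiplicative and additive constant, to distances in this planar graph. The difficulty is that a single string can be very long in the planar host yet has diameter one in the intersection graph, so the planar graph must be modified, for instance by coning off each string. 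Controlling these cones without destroying planarity is exactly the deep part of Davies' argument.

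For the reduction, let $H$ be a countable planar graph with a fixed embedding in the plane, and let $\{S_i\}$ be a family of connected subgraphs of $H$, so that $G$ has vertex set indexed by the $S_i$ and $ij\in E(G)$ if and only if $S_i\cap S_j\neq\emptyset$. Each $S_i$ is connected, so it contains a spanning tree $T_i$. Its image in the plane is a connected compact set when $S_i$ is finite. When $S_i$ is infinite, it is a countable union of finite subtrees forming an increasing chain.

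I would then thicken slightly. I would replace each $T_i$ by a curve that traces around $T_i$ in a small neighbourhood, as in an Euler tour of the doubled tree. The curve must pass through every vertex of $T_i$. It must stay inside a small tubular neighbourhood of $T_i$ that meets no other vertex or edge of $H$ except within $T_i$ itself.

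The main point is the equivalence: two such curves intersect if and only if $S_i\cap S_j\neq\emptyset$. Subgraphs sharing only an edge also share its endpoints, so every intersection happens at a common vertex. It therefore suffices to make the curves pass exactly through the vertices of $S_i$ and to keep the neighbourhoods around edges disjoint from each other away from vertices.

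If the definition in those papers allows arbitrary path-connected sets, one can simply use the embedded image of $S_i$ itself, with no tracing needed. The only remaining obstacle is the infinite case, where a string must be a single curve. There I would either use the path-connected image, or argue by compactness/König over finite subfamilies, as Davies' proof handles countable families. With the reduction done, \Tr{Davies} follows by citation.
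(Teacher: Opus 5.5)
Your proposal matches the paper, which also just imports the theorem by citation from Davies and Chang et al.\ and only asserts, in the footnote, that intersection graphs of connected subgraphs of a planar graph are string graphs in their sense. Your verification of that footnote is correct for finite $S_i$: trace a spanning tree $T_i$ of each $S_i$ (or simply parametrise the embedded image of $T_i$ by an Euler tour of the doubled tree), and note that two such images meet only at a common vertex. For infinite $S_i$ the paper gives nothing beyond the remark that Davies' proof covers countable graphs, so your path-connected-image route is at the same level of rigour. The compactness/K\"onig fallback is unnecessary, and it would not be straightforward anyway, since finite induced subfamilies do not carry the metric $d_G$ and balls in $G$ may be infinite.
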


Our first result uses \Tr{Davies} to prove the following statement, conjectured by the first author in recent meetings. We say that $G$ is a \defi{contraction minor} of a graph $H$, if \g is obtained from $H$ by contracting edges.

\begin{theorem} \label{cm to qp}
    Let $G$ be a graph quasi-isometric to a planar graph. Then every contraction minor of $G$ is quasi-isometric to a planar graph.
\end{theorem}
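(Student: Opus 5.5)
The plan is to realise a contraction minor $G'$ of $G$ as an intersection graph of connected subgraphs of a slightly modified copy of $G$. We then push these subgraphs into the planar graph, so that they become strings, and finally invoke \Tr{Davies}.

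\textbf{Step 1: $G'$ as an intersection graph.} Let $G'$ be obtained from $G$ by contracting the edges inside a partition of $V(G)$ into connected branch sets $B_v$, $v\in V(G')$. Let $G^1$ be the graph obtained from $G$ by subdividing every edge once. For each $v$, let $B'_v\subseteq G^1$ consist of:
\begin{itemize}
\item the subdivided copy of $G[B_v]$;
\item for every edge $xy$ of $G$ with $x\in B_v$, $y\notin B_v$, the midpoint of $xy$ and the half-edge joining it to $x$.
\end{itemize}
Each $B'_v$ is connected. Moreover, $B'_v\cap B'_w\neq\emptyset$ iff some edge of $G$ joins $B_v$ to $B_w$, i.e.\ iff $vw\in E(G')$. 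Hence $G'$ is exactly the intersection graph $I$ of the family $\{B'_v\}$.

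This family has an extra property we will need: every edge of $G^1$ lies in some $B'_v$. Consequently, if two members $B'_v,B'_w$ are at distance at most $C$ in $G^1$, then walking along a shortest path and noting that consecutive edges share a vertex gives $d_I(v,w)\le C+1$. Since $G^1$ is $2$-bi-Lipschitz to $G$, the graph $G^1$ is also quasi-isometric to a planar graph $H$, say via $f\colon V(G^1)\to V(H)$ with constants $(M,A)$.

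\textbf{Step 2: transfer to strings in $H$.} For each $v$, let $S_v\subseteq H$ be the union, over all edges $xy$ of $B'_v$, of a geodesic in $H$ from $f(x)$ to $f(y)$; each such geodesic has length at most $M+A$.
\begin{itemize}
\item $S_v$ is connected, because $B'_v$ is connected.
\item The family $\{S_v\}$ can be taken nonempty, covering, or at least indexed by the same set as before. (To make it cover $H$ one may add the missing vertices as singleton sets; these are at bounded distance from the others, since $f$ is coarsely surjective, so they do not affect the quasi-isometry type.)
\end{itemize}
Let $I^*$ be the intersection graph of $\{S_v\}$. This is a string graph in the sense of the footnote to \Tr{Davies}.

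\textbf{Step 3: compare $I$ and $I^*$.}
\begin{itemize}
\item If $B'_v\cap B'_w\ni x$, then $f(x)\in S_v\cap S_w$, so $E(I)\subseteq E(I^*)$.
\item Conversely, suppose $S_v\cap S_w\neq\emptyset$. Then there are vertices $x\in B'_v$ and $y\in B'_w$ with $d_H(f(x),f(y))\le M+A$. By the quasi-isometry inequality, $d_{G^1}(x,y)\le C$ for a constant $C=C(M,A)$.
\item By the covering property from Step 1, this gives $d_I(v,w)\le C+1$.
\end{itemize}
So the identity map $V(I)\to V(I^*)$ is bi-Lipschitz with constants depending only on $(M,A)$. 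By \Tr{Davies}, $I^*$ is quasi-isometric to a planar graph, and composing, so is $G'\cong I$.

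\textbf{Expected obstacle.} The main point to get right is the comparison in Step 3. A naive choice of subgraphs (e.g.\ just $B_v$ inside $G$) fails, because adjacency in $G'$ is not intersection, and near-but-disjoint sets need not be close in the intersection graph. The half-edge construction in $G^1$ is exactly what makes both directions work.

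A secondary technicality is countability in \Tr{Davies}. For uncountable graphs I would either argue that the relevant graphs may be assumed countable, or note that the quasi-isometry constants are uniform, so one can apply the result to countable (finite) subfamilies and pass to a limit.
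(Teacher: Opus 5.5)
Your proof is correct, but it is organised differently from the paper's. The paper proves the general \Tr{cm to qp gen} in one step. Each contracted branch set $C_i$ is sent to the ball $C_i'=B_H(f(C_i),M+A)$. All edges of $H$ are added to the family, so the intersection graph $S$ is a \emph{cover}-intersection graph of $H$ with $d_S\le d_H+1$. The map $C_i\mapsto C_i'$ is then checked directly to be a quasi-isometry $G'\to S$; it is not a bijection, because $S$ also contains the vertices $E(H)$.

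You instead factor through $G^1$. First you represent $G'$ exactly as the intersection graph of a covering family in $G^1$, which is a converse of \Tr{cig to cm}. Then comes a transfer step. There, the covering property of $\{B'_v\}$ in the \emph{domain} does the job that adding $E(H)$ does in the paper: it turns proximity of sets into proximity in the intersection graph.

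The two routes buy different things. Yours gives the stronger statement that $G'$ is bi-Lipschitz equivalent, via a bijection, to a region intersection graph of $H$. The paper's gives a family that covers $H$, which is what \Tr{cm to qp gen} and the class-level statement \Cr{equivalence} need. For \Tr{cm to qp} itself a cover is unnecessary, since the footnote to \Tr{Davies} only asks for connected subgraphs of a planar graph.

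So drop your parenthetical about padding $\{S_v\}$ with singletons, because as written it is wrong. A vertex $h$ lying in no $S_v$ would give an isolated vertex $\{h\}$ of $I^*$, which destroys the quasi-isometry. To obtain a genuine cover you would have to add $E(H)$, as the paper does, and give up bijectivity.

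Three minor points:
\begin{itemize}
\item $G^1$ is $(2,1)$-quasi-isometric to $G$, not bi-Lipschitz equivalent to it (there is no bijection), but the quasi-isometry is all you use.
\item $S_v$ should also contain $f(x)$ for every $x\in B'_v$, so that branch sets consisting of an isolated vertex are handled.
\item The countability discussion is moot, since the paper assumes all graphs are countable.
\end{itemize}
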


As is usual in the area, all our results come in two variants: one where \g is a (countably) infinite graph, and one where \g belongs to an infinite family of finite graphs. The same proofs apply to both setups. In this introduction we chose to present the infinite versions, as they are simpler to state. But all our proofs are constructive and provide explicit bounds and efficient algorithms for families of finite graphs that will be stated throughout the paper. For example, \Tr{cm to qp} provides an explicit function $f: \N^2 \to \N^2$ such that if \g is $(M,A)$-quasi-planar, then every contraction minor of \g is $f(M,A)$-quasi-planar. 

\medskip
\Tr{cm to qp} follows from the following more general statement. 
Recall that a \defi{\rig} of a graph $G$ is the intersection graph\footnote{See \Sr{sec gra} for the definition.}  \wrt\ a family $\{S_i\}_{i\in \ci}$ of connected subgraphs of $G$. When, in addition, $\bigcup_{i\in \ci} S_i = G$, we call $S$ a \defi{\cig} of $G$. (The benefit of \cig s compared to \rig s is that they respect the geometry of the underlying graph, much like induced subgraphs respect more of the structure than arbitrary subgraphs.)

\begin{theorem} \label{cm to qp gen}
Let $G$ be a graph quasi-isometric to a graph $H$. Then every contraction minor of $G$ is quasi-isometric to a \cig\ of $H$ (in particular, to a \cig\ of $G$ itself).
\end{theorem}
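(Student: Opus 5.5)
Let $f\colon G\to H$ be an $(M,A)$-quasi-isometry, and let $G'=G/\mathcal{E}$ be a contraction minor of $G$. Then each vertex $v'$ of $G'$ corresponds to a connected subgraph (branch set) $B_{v'}\subseteq G$, and these branch sets partition $V(G)$. Two vertices of $G'$ are adjacent iff their branch sets are joined by an edge of $G$. Equivalently, $G'$ is the intersection graph of the family of connected subgraphs $B_{v'}^+$, where $B_{v'}^+$ is $B_{v'}$ together with all edges of $G$ leaving it. These subgraphs cover $G$, so $G'$ is a \cig\ of $G$ in the trivial sense. The parenthetical ``in particular'' clause then follows from the main clause by taking $H=G$ with the identity quasi-isometry. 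So the plan is to transport this family through $f$ to a family of connected subgraphs of $H$ covering $H$, and to show that its intersection graph is quasi-isometric to $G'$.

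\textbf{Construction.} Fix a quasi-inverse $g\colon H\to G$ with $d(f(g(y)),y)\le A'$. For each edge $xy$ of $G$ choose a geodesic $P_{xy}$ in $H$ from $f(x)$ to $f(y)$; it has length at most $M+A$. For each $v'\in V(G')$ let $S_{v'}\subseteq H$ be the union of the paths $P_{xy}$ over all edges $xy$ with $x\in B_{v'}$, together with all vertices $z\in H$ with $g(z)\in B_{v'}$, each joined to $f(g(z))$ by a geodesic of length at most $A'$. Then $S_{v'}$ is connected because $B_{v'}^+$ is connected, and the union of all $S_{v'}$ is $H$: every vertex $z$ lies in the set indexed by the branch set containing $g(z)$, and every edge of $H$ can be added to the set containing one of its endpoints. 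Let $S$ be the intersection graph of $\{S_{v'}\}$. The candidate quasi-isometry is $\phi\colon V(G')\to V(S)$, $v'\mapsto S_{v'}$, which is a bijection.

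\textbf{Comparing distances.} The upper bound is easy. If $v'w'$ is an edge of $G'$, witnessed by an edge $xy$ with $x\in B_{v'}$ and $y\in B_{w'}$, then $P_{xy}$ lies in both $S_{v'}$ and $S_{w'}$, so $S_{v'}$ and $S_{w'}$ are adjacent in $S$. Hence $d_S\le d_{G'}$.

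The lower bound is the real content and the main obstacle. We must show that if $S_{v'}\cap S_{w'}\neq\emptyset$ then $d_{G'}(v',w')\le C$ for a constant $C=C(M,A)$. Suppose $z$ lies in both sets. Then $z$ is within distance about $M+A+A'$ of $f(x)$ for some $x\in B^+_{v'}$, and similarly of $f(y)$ for some $y\in B^+_{w'}$. Therefore $d_G(x,y)\le K$ for some constant $K$, and a path of length at most $K$ in $G$ projects to a walk of length at most $K$ in $G'$. This gives $d_{G'}(v',w')\le K+2$.

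Thus $\phi$ is a bijection satisfying $d_S\le d_{G'}\le (K+2)\,d_S$, by chaining along geodesics in $S$. In particular $G'$ is bi-Lipschitz, hence quasi-isometric, to the \cig\ $S$ of $H$. One technical point needs care: when $B_{v'}$ is large, $S_{v'}$ is large too, so intersections of sets are not localized. The argument above sidesteps this because it only uses a single common point $z$ and the fact that contraction collapses whole branch sets. I expect the bookkeeping of the constants, and the covering property for edges of $H$ that are far from the image of $f$, to be the steps needing the most attention. They are handled by the $g$-pullback step in the construction.
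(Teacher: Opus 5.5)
Your proof is correct, and it takes a somewhat different route from the paper's. The paper uses the canonical neighbourhoods $C_i'=B_H(f(C_i),M+A)$ and also puts every edge of $H$ into the cover. This makes any path in $H$ yield a path in $S$, giving $d_S(C_i',C_j')\le d_H(C_i',C_j')+1$. The price is that $C_i\mapsto C_i'$ misses all the edge-vertices of $S$, so the construction yields a quasi-isometry rather than a bijection. Quasi-surjectivity then has to be checked separately. The lower bound also has to pull back points along $S$-geodesics that may pass through edge-vertices, which is what \Lr{lem confn} is for.

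You instead force adjacency directly: for every edge $xy$ between branch sets you place the geodesic $P_{xy}$ into both $S_{v'}$ and $S_{w'}$. You then get a cover of $H$ indexed by $V(G')$ alone by pulling back along a quasi-inverse and appending the remaining edges of $H$. Every $S_{v'}$ lies within distance $\max\{M+A,A'\}+1$ of $f(B_{v'})$; the $+1$ accounts for the far endpoints of the appended edges, which your ``about'' should absorb. Hence intersecting sets come from branch sets at bounded distance in $G$, and you get a bijection with $d_S\le d_{G'}\le C\,d_S$. So your argument shows that $G'$ is bi-Lipschitz equivalent to a \cig\ of $H$, slightly more than the theorem claims, and with a shorter lower-bound argument. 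The paper's version, in exchange, works with choice-free neighbourhoods and needs no quasi-inverse or chosen connecting geodesics.

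One statement in your first paragraph is false, although you never use it. The intersection graph of $\{B^+_{v'}\}$ is in general not $G'$: for the path $abc$ with nothing contracted, $B^+_a$ and $B^+_c$ both contain $b$. It only lies between $G'$ and $(G')^2$. This is harmless, since you correctly derive the parenthetical clause from the main one by taking $H=G$ and $f=\mathrm{id}$.
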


To see that this implies \Tr{cm to qp}, note that if $H$ is planar, then any \cig\ of $H$ is a string graph by definition, and therefore quasi-planar by \Tr{Davies}.


The following is an approximate converse of \Tr{cm to qp gen}:

\begin{theorem} \label{cig to cm}
Let $S$ be a \cig\ over a graph $G$. Then $S$ is isomorphic to a contraction minor of a graph $G'$ which is $(2,1)$-quasi-isometric to \G.
\end{theorem}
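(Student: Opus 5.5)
The plan is to build $G'$ as a ``blow-up'' of $G$ in which every subgraph $S_i$ receives its own private copy. Contracting each private copy to a single vertex then yields exactly the intersection graph $S$.

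\textbf{Construction.} Let $\{S_i\}_{i\in\ci}$ be the family of connected subgraphs defining $S$, with $\bigcup_i S_i=G$. Define $G'$ as follows.
\begin{itemize}
\item Its vertex set is $\{(v,i): i\in\ci,\ v\in V(S_i)\}$.
\item For each $i$ and each edge $vw\in E(S_i)$, add the edge $(v,i)(w,i)$. Thus the layer $L_i:=\{(v,i): v\in V(S_i)\}$ induces a copy of $S_i$, which is connected.
\item For each vertex $v$ of $G$, add all edges $(v,i)(v,j)$ with $i\neq j$ and $v\in S_i\cap S_j$. These form a clique on the copies of $v$.
\end{itemize}
No other edges are added. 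In particular, there are no edges $(v,i)(w,j)$ with $v\neq w$ and $i\neq j$.

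\textbf{Contraction.} The layers $L_i$ partition $V(G')$ into connected sets. Contracting each $L_i$ to a single vertex $i$, and suppressing loops and parallel edges, gives a contraction minor of $G'$.
\begin{itemize}
\item If $i\neq j$ are adjacent in this minor, some edge of $G'$ joins $L_i$ to $L_j$. By construction it has the form $(v,i)(v,j)$, so $v\in S_i\cap S_j$.
\item Conversely, if $v\in S_i\cap S_j$, the clique edge $(v,i)(v,j)$ makes $i$ and $j$ adjacent.
\end{itemize}
Hence the minor is isomorphic to the intersection graph $S$. This uses the convention that two subgraphs intersect if they share a vertex; sharing an edge implies sharing a vertex anyway.

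\textbf{Quasi-isometry.} Let $\pi:(v,i)\mapsto v$. Every vertex of $G$ lies in some $S_i$ because the family covers $G$, so $\pi$ is surjective.
\begin{itemize}
\item Every edge of $G'$ maps to an edge of $G$ or to a single vertex. Hence $d_G(\pi x,\pi y)\le d_{G'}(x,y)$.
\item For the reverse bound, let $v=v_0v_1\dots v_n=w$ be a geodesic in $G$. Since $\bigcup_i S_i=G$, each edge $v_{k-1}v_k$ lies in some $S_{i_k}$. Starting from $(v,j)$, at each step we move to $(v_{k-1},i_k)$ using at most one clique edge, then to $(v_k,i_k)$ using a layer edge. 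Finally, at most one clique edge takes us to $(w,j')$.
\end{itemize}
This walk has length at most $2n+1$, so $d_{G'}((v,j),(w,j'))\le 2d_G(v,w)+1$. Together with the lower bound and surjectivity, $\pi$ is a $(2,1)$-quasi-isometry.

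The only point requiring care is in the contraction step: no edge of $G'$ may join two distinct layers except through a shared vertex. This is exactly why edges between layers are added only between copies of the same vertex of $G$. The covering hypothesis is what gives the factor $2$ in the distance bound, since it guarantees every edge of $G$ is realised inside some layer.
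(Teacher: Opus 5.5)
Your proof is correct and is essentially the paper's own argument. The paper also replaces each vertex $v$ by a clique on its copies $v_{S_i}$, one for each $S_i$ containing $v$, adds the horizontal edges $v_{S_i}w_{S_i}$ for $vw\in E(S_i)$, contracts the resulting copies of the $S_i$, and uses the canonical projection as the $(2,1)$-quasi-isometry, with the same distance bounds $d_G\le d_{G'}\le 2d_G+1$.
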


We proved \Tr{cm to qp} using \Tr{Davies}. Conversely, any proof of \Tr{cm to qp} would imply, via \Tr{cig to cm}, a proof of \Tr{Davies}.

\comment{
As an immediate corollary, we deduce that every string graph is a contraction minor of a graph $(2,1)$-quasi-isometric to a planar graph.
}

By combining \Tr{cm to qp gen} and \Tr{cig to cm}, we deduce the following corollaries: 

\begin{corollary} \label{cor cigs}
    Let $G,H$ be quasi-isometric graphs. Then every \cig\ $S$ of \g is quasi-isometric to a \cig\ of $H$.
\end{corollary}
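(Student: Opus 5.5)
The plan is to chain \Tr{cig to cm} and \Tr{cm to qp gen}, using that quasi-isometry is an equivalence relation on graphs, with constants composing in a controlled way.

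\emph{Step 1.} Let $S$ be a \cig\ of $G$. By \Tr{cig to cm}, $S$ is isomorphic to a contraction minor of some graph $G'$ that is $(2,1)$-quasi-isometric to $G$.

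\emph{Step 2.} By hypothesis, $G$ is quasi-isometric to $H$, say $(M,A)$-quasi-isometric. Composing the quasi-isometry $G'\to G$ with $G\to H$ shows that $G'$ is quasi-isometric to $H$, with constants depending only on $(M,A)$. Here I use the standard fact that a composition of an $(M_1,A_1)$- and an $(M_2,A_2)$-quasi-isometry is an $(M_1M_2, M_2A_1+A_2)$-quasi-isometry, up to the usual bookkeeping for coarse surjectivity. If the paper's notion of quasi-isometry is not visibly symmetric, I also invoke the standard existence of quasi-inverses to orient the maps correctly.

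\emph{Step 3.} Apply \Tr{cm to qp gen} to the pair $G'$, $H$. Every contraction minor of $G'$, in particular the one isomorphic to $S$, is quasi-isometric to a \cig\ of $H$. Since isomorphism preserves quasi-isometry, $S$ is quasi-isometric to a \cig\ of $H$, which is the claim.

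The only point requiring care is Step 2, namely that the constants stay uniform. This matters for the finite-family version, where we want an explicit bound $f(M,A)$. It is routine, because the constants $(2,1)$ from \Tr{cig to cm} are universal and \Tr{cm to qp gen} presumably comes with explicit constant dependence. I do not expect a genuine obstacle.
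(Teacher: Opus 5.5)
Your proposal is correct and is exactly the paper's proof: Theorem~\ref{cig to cm} realises $S$ as a contraction minor of some $G'$ quasi-isometric to $G$, hence to $H$, and Theorem~\ref{cm to qp gen} is then applied with $G'$ in place of $G$. Your remarks on composing the constants and orienting the maps via quasi-inverses are routine bookkeeping that the paper leaves implicit.
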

\begin{proof}
By \Tr{cig to cm}, $S$ is a contraction minor of a graph $G'$ quasi-isometric to $G$, and therefore to $H$. Applying \Tr{cm to qp gen} with $G$ replaced by $G'$ yields that $S$ is quasi-isometric to a \cig\ of $H$.
\end{proof}

\Tr{Davies} provides interesting examples of quasi-planar graphs, but it cannot  capture all $(M,A)$-quasi-planar graphs just because there are such graphs with arbitrarily large $M,A$, while the constants in \Tr{Davies} are universal. However, we will come closer to capturing all quasi-planar graphs by iterating the operation of taking a \cig\ implicit in the definition of a string graph as follows. Given a graph ---or a class of graphs--- $H$, we start by letting $\mathcal{CIG}^0(H) = H$, and recursively define $\mathcal{CIG}^n(H)$ for every $n\in \N_{>0}$ to be the class of \cig s of a graph in $\mathcal{CIG}^{n-1}(H)$. A graph \g is an \defi{iterated cover-intersection graph} of $H$ if $G \in \mathcal{CIG}^n(H)$ for some $n$. When $H$ is planar, we also call such a \g an \defi{iterated string graph}. We prove

\begin{corollary} \label{cor isg}
    Every iterated string graph 
    is quasi-planar.
\end{corollary}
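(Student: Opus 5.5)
We argue by induction on $n$, showing that every $G\in\mathcal{CIG}^n(H)$ with $H$ planar is quasi-isometric to a planar graph. The case $n=0$ is trivial, since $G=H$ is itself planar. The case $n=1$ is exactly \Tr{Davies}: a \cig\ of a planar graph is in particular a \rig\ of it, hence a string graph.

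For the inductive step, let $G\in\mathcal{CIG}^{n}(H)$ with $n\ge 2$. Then $G$ is a \cig\ of some $G_{n-1}\in\mathcal{CIG}^{n-1}(H)$. By the induction hypothesis, $G_{n-1}$ is quasi-isometric to a planar graph $P$. Now apply \Cr{cor cigs} to the quasi-isometric pair $G_{n-1},P$. It yields that the \cig\ $G$ of $G_{n-1}$ is quasi-isometric to some \cig\ $S$ of $P$. Since $P$ is planar, $S$ is a string graph, so \Tr{Davies} makes it quasi-isometric to a planar graph $P'$. Composing the two quasi-isometries $G\to S\to P'$ shows that $G$ is quasi-planar, completing the induction.

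In effect, the induction alternates between \Cr{cor cigs}, which transports the \cig\ operation along a quasi-isometry to the planar model, and \Tr{Davies}, which re-planarises. Each step stays within the class of graphs quasi-isometric to planar graphs.

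The main obstacle is bookkeeping the constants rather than any new idea. For the infinite statement, mere existence of quasi-isometries suffices. For the version about families of finite graphs, we need \Cr{cor cigs} in quantitative form: an $(M,A)$-quasi-isometry $G_{n-1}\to P$ should yield an $f(M,A)$-quasi-isometry $G\to S$ with $f$ independent of the graphs, as promised by the explicit bounds in \Tr{cm to qp gen} and \Tr{cig to cm}. With that in hand, composing with the universal constants of \Tr{Davies} gives constants depending only on $n$, obtained by iterating $f$ followed by composition with the Davies constants. The constants may therefore grow with $n$, but for fixed $n$ they are uniform. One also has to check that composing quasi-isometries stays within controlled constants, which is standard. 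Countability, needed for \Tr{Davies}, is preserved because all graphs involved are countable.
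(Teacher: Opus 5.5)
Your proof is correct and follows the paper's argument: induction on $n$, where the inductive step transports the \cig\ operation to the planar model via \Cr{cor cigs}, re-planarises with \Tr{Davies}, and composes the two quasi-isometries. Treating $n=1$ separately is harmless, since the general inductive step already covers it, and your remarks on uniform constants match the explicit bounds the paper provides.
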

\begin{proof}
Suppose $S\in \mathcal{CIG}^n(\cp)$, where \cp\ stands for the class of planar graphs. We will prove that $S$ is quasi-planar by induction on $n$.
\comment{For convenience, let $\mathcal{CIG}^0(\cp):= \cp$, so that}
The base case $n=0$ is trivial. Now suppose $S$ is a \cig\ of a graph $S'\in \mathcal{CIG}^{n-1}(\cp)$. \comment{obtained by subdividing some edges of a graph $S''\in \mathcal{CIG}^{n-1}_*(\cp)$ into paths of bounded length.
Note that $S'$ and $S''$ are quasi-isometric.
}
By the induction hypothesis, $S'$ is quasi-isometric to some planar graph $P$.
\comment{and, since quasi-isometries are preserved under composition, so is $S'$.}
By applying \Cr{cor cigs} to $S'$ and $P$, we deduce that $S$ is quasi-isometric to some \cig\ of $P$, which is quasi-planar by \Tr{Davies}.
Since quasi-isometries are preserved under composition, we deduce that $S$ too is quasi-planar.
\end{proof}

\comment{
\begin{question}
    Suppose $G$ does not have $K_5$ or $K_{3,3}$ as an  asymptotic minor. Must \g be quasi-isometric to an iterated string graph?
\end{question}

This, combined with \Cr{cor isg}, would imply the coarse Kuratowski conjecture of \cite{GeoPapMin}.
}


We do not yet know if the converse of \Cr{cor isg} holds, and suspect it does not. However, we will be able to obtain a converse, and thus arrive at a fully combinatorial characterization of quasi-planar graphs, by allowing subdivisions of edges into bounded length paths  ---an operation that clearly preserves quasi-planarity--- each time we take a \cig. To make this precise, we define the class of iterated \defi{\scig s (\SCIG)} of $H$ by adapting the above definition of $\mathcal{CIG}^n$ as follows. Again we start with $\SCIG^0(H):=H$, and
we let $S^n(H)$ be the class of graphs obtained by subdividing each edge of a graph in $\SCIG^{n-1}(H)$ into a path of length at most 3,\footnote{Of length at most 2 is also possible by an adaptation of our construction.}
and let $\SCIG^n(H)$ denote the class of \cig s over a graph in $S^n(H)$.

We can now state our main result:

\begin{theorem} \label{pl iff subd iter str}
    A graph is quasi-isometric to a planar graph if and only if it is an iterated \scig\ of a planar graph.
\end{theorem}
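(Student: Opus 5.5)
The backward implication is handled by induction on $n$, exactly as in the proof of \Cr{cor isg}. Suppose $G\in\SCIG^n(\cp)$. Then $G$ is a \cig\ of a graph $G'\in S^n(\cp)$, and $G'$ arises from some $G''\in\SCIG^{n-1}(\cp)$ by subdividing each edge into a path of length at most $3$. By induction, $G''$ is quasi-isometric to a planar graph $P$. Subdividing edges into paths of length $1$ to $3$ is a $(3,3)$-quasi-isometry: the inclusion of $V(G'')$ into $V(G')$ distorts distances by a factor of at most $3$ and is $1$-dense. Hence $G'$ is quasi-isometric to $P$. Applying \Cr{cor cigs} to $G'$ and $P$, we see that $G$ is quasi-isometric to a \cig\ of $P$. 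That \cig\ is a string graph, so it is quasi-planar by \Tr{Davies}, and composing the quasi-isometries gives the claim. For families of finite graphs, the constants depend only on $n$ and on the constants of the previous level.

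The forward implication is the substantial part. Let $G$ be $(M,A)$-quasi-isometric to a planar graph $P$. We must exhibit $G$, up to isomorphism rather than merely up to quasi-isometry, as an element of $\SCIG^n(P')$ for some planar $P'$ and some $n$ bounded in terms of $M,A$. Since the statement only asks that $G$ be an iterated \scig, I may choose $P'$ conveniently, for instance a subdivision of $P$. The plan is to reverse the quasi-isometry in a bounded number of steps that each \emph{coarsen} or \emph{refine} the geometry.

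First step: fix a quasi-isometry $f\colon V(G)\to V(P)$. For each $v\in V(G)$, let $S_v$ be the ball of radius $r$ around $f(v)$ in $P$, with $r$ large in terms of $M,A$. For suitable $r$, the intersection graph of $\{S_v\}$ contains every edge of $G$, since adjacent vertices have images at distance at most $M+A$. However, it also contains spurious edges between vertices of $G$ at distance up to roughly $M(2r+A)$.

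To fix this I will use the freedom to subdivide. Subdivide $P$ so that each ball splits into finer pieces, and choose $S_v$ to be a connected subgraph, for example a tree, joining $f(v)$ to the images $f(u)$ of its neighbours along geodesics. Then $S_u\cap S_v\neq\emptyset$ whenever $uv\in E(G)$, but nonadjacent pairs may still meet. The cleanest way to achieve \emph{exact} adjacency seems to be a second round. After one \cig\ we have a graph $H_1$ in which $G$ sits with all its edges, together with extra bounded-length edges. We then subdivide $H_1$ and pass to a further \cig\ whose subgraphs are indexed by $V(G)$, choosing them so that two of them intersect precisely along edges of $G$. Concretely, each edge of $H_1$ is subdivided into a path with a middle vertex. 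The subgraph for $v$ is taken to be the star at $v$ extended to the midpoints of the edges $vu$ with $uv\in E(G)$, together with additional subgraphs, namely the single subdivided edges not in $E(G)$, so that the cover condition $\bigcup S_i=H$ holds.

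The main obstacle is precisely this cover requirement. Extra covering pieces become extra vertices of the \cig, so the resulting graph is not literally $G$. I expect to handle it by repeating the construction a bounded number of times, each time absorbing the auxiliary vertices into sets indexed by $V(G)$ while keeping each piece connected and ensuring that unwanted pieces do not create intersections between nonadjacent vertices. This is where the ``length at most $3$'' subdivisions and the iteration count enter. I would first try the following scheme. Subdivide every edge into three. Let $S_v$ consist of $v$ together with the first third of each incident edge, and for $uv\in E(G)$ also the middle third of the edge $uv$, assigned to whichever endpoint is smaller in a fixed ordering. Edges of $H_1$ not in $G$ then split cleanly between their endpoints without any overlap. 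This gives a cover whose intersection graph is exactly $G$, and it is this careful partition-like choice that I expect to need the most checking.
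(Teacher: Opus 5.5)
Your backward direction is correct and is the paper's argument: bounded subdivisions preserve the quasi-isometry class, then \Cr{cor cigs} and \Tr{Davies}. Your first forward step is also sound, and it is a tidy substitute for \Lr{VG = VH}. For $r\ge\max\{A+1,(M+A)/2\}$ the balls $B_P(f(v),r)$ are connected and cover $P$, edges included. So $H_1\in\CIG(P)$ has vertex set $V(G)$ and $G\subseteq H_1\subseteq G^{K}$ with $K=M(2r+A)$.

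The gap is in the second step. In your concrete scheme an edge $uv\in E(H_1)\setminus E(G)$ becomes a path $u\,a\,b\,v$ with $a\in S_u$ and $b\in S_v$, but the middle edge $ab$ lies in no $S_w$. So $\{S_v\}$ is not a cover, and your claim that it ``gives a cover whose intersection graph is exactly $G$'' is false. Covering edges is essential, not a technicality. If it could be dropped, your scheme would realise every spanning subgraph of any graph as an intersection graph of connected pieces of diameter at most $4$ of its $3$-subdivision, for example $P_n$ from $C_n$. \Cr{bounddiamsets} would then make long paths uniformly quasi-isometric to long cycles, which is false.

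Nor can this be repaired locally. As long as $v\in S_v$ for all $v$, the pieces covering the three edges of $u\,a\,b\,v$, together with $S_u$ and $S_v$, form a walk of length at most $4$ from $u$ to $v$ in the intersection graph. So a single step can only eliminate extra edges $uv$ with $d_G(u,v)\le 4$, whereas the extra edges of $H_1$ can join vertices at $G$-distance up to $K$, typically much larger. Covering the middle edges by separate pieces only replaces $uv$ by a $2$-path through a new vertex, which must itself be eliminated later, with the same problem. Your ``repeat and absorb'' plan gives no mechanism for shortening these long edges.

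The missing idea is the paper's edge-sliding. Since the identity $H_1\to G$ is a bi-Lipschitz equivalence, \Lr{lem: iest} and \Lr{lem: biL and est} turn $H_1$ into $G$ in at most $2\lceil K\rceil$ rounds. Each round moves every extra edge by at most one endpoint-step along a slider edge of $G$, which stays fixed as part of the frame, until the edge lands on an edge of $G$. \Lr{lem: est implies cig} then realises each round by two $\SCIG$ operations. It absorbs the subdivided moving edge into the vertex-bag of the pivot and the edge-bag of the slider, so every edge is covered and the only intersections created are the intended ones.
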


We have sketched the proof of the backward direction above (the proof of \Cr{cor isg} easily extends to allow bounded-length subdivisions in each iteration). 

For the proof of the forward direction  we will introduce a notion of \defi{`edge-sliding'} which defines an equivalence relation on graphs of the same order, that we consider to be of independent interest; see \Sr{sec ES} for more. In fact this will yield a much more general result as we explain in the following subsection.


\subsection{Generalisation to, and beyond, minor-closed classes}

We have been focusing on quasi-planar graphs above, but our results are much more general. In particular, the forward direction of \Tr{pl iff subd iter str} follows from the forward direction of the following:

\begin{theorem} \label{qi implies iter scig}
    Let $G,H$ be graphs. The following are equivalent:
    \begin{enumerate}
        \item \label{qi-itercig i} $G,H$ are $(M,A)$-quasi-isometric; 
        \item  \label{qi-itercig ii} $G \in \SCIG^n (H)$ and $H \in \SCIG^n (G)$ for some $n \geq 0$, {whereby} at each iteration the cover consists of subgraphs with diameter bounded by some uniform $D \geq 0$ in their intrinsic metric.
    \end{enumerate}
    If $G,H$ satisfy the conditions above, then $M, A$ depend only on $n,D$ and vice-versa.
\end{theorem}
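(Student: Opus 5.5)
The implication (ii)$\Rightarrow$(i) is a direct computation, so I would do it first. It suffices to treat one iteration and then compose, since quasi-isometries compose and the constants then depend only on $n$ and $D$.

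Subdividing each edge into a path of length at most $3$ gives a $(3,3)$-quasi-isometry: take the inclusion of the vertex set, and note that every subdivision vertex lies within distance $1$ of an original vertex. Next, let $S$ be a \cig\ of $G'$ with respect to a cover $\{S_i\}_{i\in\ci}$ whose members have intrinsic diameter at most $D$. Map each $i$ to some vertex $v_i\in S_i$, and check that this is a quasi-isometry.
\begin{itemize}
\item An edge $ij$ of $S$ means $S_i\cap S_j\neq\emptyset$, so $d(v_i,v_j)\le 2D$.
\item Conversely, a path $v_i=x_0,\dots,x_k=v_j$ in $G'$ lifts to a walk in $S$. Each edge $x_tx_{t+1}$ lies in some cover member, because the cover covers all edges. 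Consecutive members along the path share a vertex, so the walk has length at most about $k+2$.
\item Coarse surjectivity holds because every vertex lies in some $S_i$, hence within $D$ of $v_i$.
\end{itemize}
Composing these maps along the $n$ iterations from $H$ to $G$ gives (i).

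For (i)$\Rightarrow$(ii) I would construct the two chains explicitly with $n$ bounded in terms of $M$ and $A$; by symmetry it is enough to show $G\in\SCIG^n(H)$. Fix an $(M,A)$-quasi-isometry $f\colon H\to G$ with coarse inverse $g$. The natural target is the cover-intersection graph of a suitable subdivision of $H$ with cover $S_x:=$ the ball of radius $R$ around $g(x)$, for $x\in V(G)$. These sets are connected, have diameter at most $2R$, and cover $H$ once $R$ exceeds the coarse-surjectivity constant.

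The resulting intersection graph $\Gamma$ has vertex set $V(G)$. Its edge set contains $E(G)$ as soon as $R$ is large compared with $M+A$, since adjacent $x,y$ have $g(x),g(y)$ at bounded distance, so the balls meet. However, $\Gamma$ also contains extra edges $xy$ with $d_G(x,y)\le K$, where $K=K(M,A,R)$. So one step only produces a graph $\Gamma$ on $V(G)$ with
\[
E(G)\subseteq E(\Gamma)\subseteq E(G^{K}),
\]
where $G^K$ denotes the $K$-th power. Two further issues arise: $V(G)$ may need multiplicities if $g$ is not injective, and the cover must remain exactly a cover of the graph.

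The remaining and main step is to pass from such a $\Gamma$ back to $G$ exactly, using further bounded iterations. This is where I would use the edge-sliding relation announced in \Sr{sec ES}. I would show that a graph $\Gamma$ sandwiched between $G$ and $G^K$ can be turned into $G$ by boundedly many rounds of ``subdivide, then take a \cig''. Each round should implement a bounded family of edge slides: an edge $xy$ realised by a path $x=z_0,\dots,z_K=y$ in $G$ is shortened step by step until only the edges of $G$ remain.

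A single round would work as follows. Subdivide each edge of $\Gamma$ into a path of length at most $3$. Then take as cover the stars of the original vertices, extended by suitable halves of the subdivided edges. Choosing which half-edges each star keeps lets one delete edges or reroute them to a neighbour. This is the purpose of allowing subdivisions of length $3$: a middle vertex gives the freedom to attach it to either end, or to neither.

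Long extra edges must be removed through a bounded number of intermediate rounds, shortening their span by one each time, so $n=O(K)$. The key lemma is therefore a precise statement of what one round can achieve, and its verification is where I expect the real difficulty: the cover pieces must be connected, must have bounded intrinsic diameter, and must cover every subdivided edge, all while every edge of $G$ is preserved. I would first try to prove that any edge slide along an edge of $G$ is realisable in one round, and then iterate. Handling non-injective $g$ and vertex multiplicities should be a technical detail, dealt with by the extra subdivision vertices.
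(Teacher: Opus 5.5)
Your direction (ii)$\Rightarrow$(i) is fine. So is the opening move of (i)$\Rightarrow$(ii). Covering $H$ by the balls $B_H(g(x),R)$, $x\in V(G)$, gives in one step a graph $\Gamma\in\SCIG^1(H)$ on $V(G)$ with $G\subseteq\Gamma\subseteq G^K$. This is a neat substitute for the paper's Lemma~\ref{VG = VH}, and multiplicities are no issue, since a cover is an indexed family.

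The gap is the one-round lemma you identify as the crux: as described, it is false.
\begin{enumerate}
\item Suppose every member $X_v$ is the star of $v$ extended by pieces of subdivided edges at $v$. Take an edge $xz$ of $\Gamma$ subdivided as $x\,a\,b\,z$. The vertices $a,b$ and the edges $xa,ab,bz$ must all be covered, and only $X_x,X_z$ are available, so these two members necessarily meet. There is no ``neither'' option, and the resulting intersection graph is exactly $\Gamma$: nothing is deleted or rerouted.
\item To delete or reroute $xz$, some third member $X_w$ must reach the interior of the subdivided $xz$. That interior is attached to the rest of the graph only at $x$ and $z$, so $X_w$ must contain the original vertex $x$ or $z$ itself. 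It then meets every other member containing that vertex.
\item Such collisions cannot be avoided once many slides must be done simultaneously, which a bounded number of rounds forces. Let $G$ be the $1$-subdivision of $K_{3,3}$, with sides $a_i$, $b_j$ and middle vertices $w_{ij}$, and let $\Gamma=G\cup\{a_ib_j: i,j\in[3]\}\subseteq G^2$. In any single round realising $G$ by members with $v\in X_v$, each pair $(i,j)$ forces $X_{w_{ij}}$ to contain $a_i$ or $b_j$. But each $a_i$ or $b_j$ lies in at most one member besides its own, since two such members would intersect while the $w$'s are pairwise non-adjacent. So at most $6$ of the $9$ pairs can be handled.
\end{enumerate}

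The missing idea is that one edge-sliding step needs \emph{two} SCIG operations, passing through an intermediate graph whose vertex set is not $V$. In Lemma~\ref{lem: est implies cig} the paper proceeds as follows.
\begin{enumerate}
\item It first covers a subdivision of $G$ by the vertex-bags and edge-bags of the frame $G\cap H$. Each slider $yz$ thereby becomes its own vertex of the intermediate graph $G_2$. Its bag can be enlarged into the subdivided edge to meet the pivot's vertex-bag without merging $y$ with $z$.
\item It then subdivides the new (green) edges of $G_2$ and covers by the clique-bags $\bar K^v$, placing each new midpoint in exactly the two clique-bags prescribed by its label.
\end{enumerate}
This returns to vertex set $V$ with edge set exactly $E(H)$. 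It works for arbitrarily many simultaneous slides, using covers of diameter at most $4$.

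Combine this with the bound $k\le 2\lceil M\rceil$ on the number of twin steps for a bi-Lipschitz constant $M$ (Lemma~\ref{lem: biL and est}); your step-by-step shortening essentially reproduces that bound. Together they give $n,D$ depending only on the original quasi-isometry constants. Without such a two-step lemma, or some other valid replacement for your one-round claim, your argument for (i)$\Rightarrow$(ii) is not established.
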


This is similar to \Tr{pl iff subd iter str}, but the bounded diameter condition is not present in the latter, and its role in the proof is replaced by \Tr{Davies}. James Davies (private communication) has expressed optimism that his \Tr{Davies} holds with planar graphs replaced by any minor-closed family. Once this is proved our \Tr{pl iff subd iter str} would extend immediately for any class $\cx = Forb(M)$, where $M$ is a collection of graphs; indeed, the forward implication via \Tr{qi implies iter scig} is unaffected, and the proof of the backward implication via \Cr{cor isg} would generalise.

Although we do not yet know whether subdivisions are necessary in \Tr{pl iff subd iter str}, we know they are in \Tr{qi implies iter scig}: when $H$ is a tree, any graph in $\CIG^n(H)$ has its cycle space generated by triangles, but this is not true for every quasi-tree $G$;  more details will be provided in subsequent work.

\subsection{Coarsely contraction closed classes}

As remarked above, \Tr{Davies} is `equivalent' to \Tr{cm to qp}, and we now generalise some of our results to graph classes having this property. 
Letting \defi{$\QI(\cx)$} denote the class of graphs that are quasi-isometric to some graph of a given class \cx, we have

\begin{corollary} \label{equivalence}
    The following are equivalent for any class of graphs $\cx$:
    \begin{enumerate}
    \item \label{ccc i} $\mathcal{QI}(\cx)$ is closed under contraction minors;
    \item \label{ccc ii} $\mathcal{CIG}(\cx) := \mathcal{CIG}^1(\cx) \subseteq \mathcal{QI}(\cx)$;
    \item \label{ccc iii} $\mathcal{CIG}^n(\cx) \subseteq \mathcal{QI}(\cx)$ for every $n \in \N$, and
    \item \label{ccc iv} $\SCIG^n(\cx) \subseteq \mathcal{QI}(\cx)$ for every $n \in \N.$
    \end{enumerate}
\end{corollary}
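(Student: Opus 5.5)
We prove the cycle of implications \ref{ccc i} $\Rightarrow$ \ref{ccc ii} $\Rightarrow$ \ref{ccc iii} $\Rightarrow$ \ref{ccc iv} $\Rightarrow$ \ref{ccc ii} $\Rightarrow$ \ref{ccc i}. Since \ref{ccc iii} trivially implies \ref{ccc ii} (take $n=1$) and \ref{ccc iv} likewise implies \ref{ccc ii} (take $n=1$ and subdivide each edge into a path of length $1$, so that $\SCIG^1(\cx)\supseteq \CIG^1(\cx)$), it suffices to prove three things: \ref{ccc i}$\Leftrightarrow$\ref{ccc ii}, \ref{ccc ii}$\Rightarrow$\ref{ccc iii}, and \ref{ccc ii}$\Rightarrow$\ref{ccc iv}. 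Throughout, quasi-isometry is understood with constants that are allowed to grow with $n$, exactly as in the proof of \Cr{cor isg}.

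For \ref{ccc i}$\Rightarrow$\ref{ccc ii}, let $S$ be a \cig\ of some $G\in\cx$. By \Tr{cig to cm}, $S$ is isomorphic to a contraction minor of a graph $G'$ that is $(2,1)$-quasi-isometric to $G$. Hence $G'\in\QI(\cx)$, and \ref{ccc i} gives $S\in\QI(\cx)$. For \ref{ccc ii}$\Rightarrow$\ref{ccc i}, let $G\in\QI(\cx)$, say $G$ is quasi-isometric to $H\in\cx$, and let $G_0$ be a contraction minor of $G$. By \Tr{cm to qp gen}, $G_0$ is quasi-isometric to a \cig\ of $H$. That \cig\ lies in $\CIG(\cx)\subseteq\QI(\cx)$ by \ref{ccc ii}, so $G_0\in\QI(\cx)$ because quasi-isometries compose.

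For \ref{ccc ii}$\Rightarrow$\ref{ccc iii}, we copy the induction of \Cr{cor isg}, replacing planar graphs by $\cx$ and \Tr{Davies} by \ref{ccc ii}. Let $S\in\CIG^n(\cx)$ be a \cig\ of some $S'\in\CIG^{n-1}(\cx)$. By induction $S'$ is quasi-isometric to some $P\in\cx$. By \Cr{cor cigs}, $S$ is quasi-isometric to a \cig\ of $P$, which lies in $\QI(\cx)$ by \ref{ccc ii}. Composing the two quasi-isometries gives $S\in\QI(\cx)$.

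For \ref{ccc ii}$\Rightarrow$\ref{ccc iv}, we run the same induction with one extra step. Let $S\in\SCIG^n(\cx)$ be a \cig\ of $S''$, where $S''$ is obtained from some $S'\in\SCIG^{n-1}(\cx)$ by subdividing each edge into a path of length at most $3$. The only new point is to check that $S''$ is quasi-isometric to $S'$, with uniform constants. The inclusion $V(S')\hookrightarrow V(S'')$ does this: distances between vertices of $S'$ are multiplied by a factor between $1$ and $3$, and every subdivision vertex is within distance $1$ of $V(S')$. After that, the argument for \ref{ccc iii} goes through verbatim with $S'$ replaced by $S''$.

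The main point requiring care is the direction \ref{ccc ii}$\Rightarrow$\ref{ccc i}, since the contraction minor must be compared with a \cig\ of a graph in $\cx$ itself rather than of $G$. This is exactly what \Tr{cm to qp gen} provides, as it allows the target $H$ to be any graph quasi-isometric to $G$. The other steps are routine bookkeeping of constants under composition.
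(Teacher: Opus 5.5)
Your proof is correct and follows the paper's argument. You get \ref{ccc i}$\Leftrightarrow$\ref{ccc ii} from \Tr{cig to cm} and \Tr{cm to qp gen}, and you upgrade to iterated (subdivided) \cig s by the induction of \Cr{cor isg}, with the bounded-length subdivision step checked separately. The only difference is that you prove \ref{ccc ii}$\Rightarrow$\ref{ccc iii} directly, whereas the paper observes that $\CIG^n(\cx)\subseteq\SCIG^n(\cx)$, so \ref{ccc iv} already gives \ref{ccc iii}. Your opening announcement of a ``cycle'' through \ref{ccc iii}$\Rightarrow$\ref{ccc iv} does not match what you actually prove, but the reduction you state immediately afterwards is sufficient.
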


\begin{proof}
    Note that \ref{ccc i} and  \ref{ccc ii} are equivalent, where the forward direction follows from \Tr{cig to cm} and the backward direction follows from \Tr{cm to qp gen}.
    By proceeding as in \Cr{cor isg}, we can upgrade  
    \ref{ccc ii} to \ref{ccc iv}, and the latter clearly implies \ref{ccc iii} and \ref{ccc ii}.
\end{proof}

We call a class of graphs \cx\  \defi{\cococ} if it satisfies any (and hence all) of the conditions above.

For example, the class of planar graphs is coarsely contraction closed, since string graphs are quasi-planar, and above we expressed hope that so is any minor-closed family of finite graphs. Moreover, it is not hard to see that any class of graphs for which the (false \cite{DHIM}) fat minor conjecture of \cite{GeoPapMin} holds is coarsely contraction closed. We would be interested to see other sufficient (or necessary) conditions for a class to be \cococ.

In particular, letting \defi{$Forb_c(M)$} denote the class of graphs that forbid each member of a set $M$ of graphs as a contraction minor, one can ask 
\begin{question}
    Is $Forb_c(M)$ coarsely contraction closed for any non-empty set $M$ of finite graphs?
\end{question}

For example, for every $n \in \N$, the class of graphs with $n$ connected components is not minor-closed but it is closed under contraction minors.
It is easy to see that it is coarsely contraction closed.

\comment{
\begin{examp} \normalfont
    It is not sufficient for a class \cx\ to be closed under taking subgraphs, in order to be coarsely contraction closed.

    For example, let $\cx$ be the class of graphs whose vertices all have finite degree, which is closed under subgraphs, but not under taking contraction minors. Consider the graph in $\cx$ constructed by taking an infinite path $P$ and attaching an infinite path to each vertex of $P$. By contracting $P$, we obtain the $\aleph_0$-star of rays $S$. We now check that it is not quasi-isometric to any graph $G \in \cx$.

    Indeed, assume there was some $(M,A)$-quasi-isometry $f:S \rightarrow G$. Fix the center of the star $x \in V(S)$ and some $R \in \R$ large enough compared to $M,A$. We denote with $\partial B_S(x, R)$ the infinite collection of vertices in $S$ that are exactly $R$-far away from $x$. Note that there is some $R' \in \R$, depending on $M,A$, such that $f(\partial B_S(x, R)) \subseteq B_G(f(x), R')$. Since the latter is finite, there are some $y,z \in \partial B_S(x, R)$ such that $d_G(f(y), f(z)) = 0$, but $d_S(y,z)=2R$, which is a contradiction if $R$ is large enough.
\end{examp}

For every $n \in \N$ and every class $\cx$, we have
    \begin{enumerate}
        \item $\SCIG^n(\cx) \subseteq \SCIG^{n+1}(\cx)$
        \item $\CIG^n(\cx) \subseteq \CIG^{n+2}(\cx)$
        \item $\CIG^n(\cx) \subseteq \SCIG^{n}(\cx)$.
    \end{enumerate}
It is natural to ask whether these sequences stabilize. However, this does not happen in general. For example let $\cx = \cp$. As shown in \cite{DaviesStringGraphsQuasiPlanar} and \cite{CCTZstring}, there is an absolute constant $M$ such that any graph in $\CIG(\cp)$ is $(M,0)$-quasi-isometric to a planar graph. Hence any graph in $\CIG^{n}(\cx)$ (resp.\ $\CIG^{n+1}(\cx)$) is quasi-isometric to a planar graph with multiplicative distortion of $M^n$ (resp.\ $M^n 2^n$).
On the other side, one can obtain graphs with arbitrarily fat $K_5$ minors by repetitively taking cover-intersection graphs, even if we do not allow for edge subdivisions. 
As an example, pick some large enough $N \in \N$ and let $G = Cay(\Z \times \Z \times \Z/N\Z, S)$, where $S = \{(\pm 1, 0,0), (0, \pm 1, 0), (0,0,\pm 1), \pm (1,0,1), \pm (0,1,1)\}$. It is easy to check that is can be generated (starting from the planar square grid with $N$-long cycles attached to each vertex) repetitively taking \cig s.   
}

\subsection{Tree-decompositions preserving quasi-planarity, and generalisations}

We will prove, in \Sr{sec TD QP}, that quasi-planarity is preserved under tree-decompositions with bounded-diameter adhesions: 
\begin{theorem} \label{tree decomposition}
    Suppose a graph $G$ admits a tree decomposition $(T,\{V_i\}_{i \in V(T)})$ \st\ for some {$M\in \R_{\geq 1}$ and $A \in \R_{\geq 0}$,}
    \begin{enumerate}
    \item \label{ptd i} each $G[V_i]$ is connected and $(M,A)$-quasi-planar \wrt\ $d_G$\footnote{The statement becomes false if we replace $d_G$ by the intrinsic metric $d_{G[V_i]}$ here; however, given such a tree decomposition, we can always modify it so that $d_{G[V_i]}$ and $d_G$ differ by bounded additive distortion, in which case it does not matter which of the two metrics we consider. The same applies to item \ref{ptd ii}; see \Lr{lem:TD exp} and \Rr{rem TD}. Alternatively, if in \ref{ptd ii} we require bounded intrinsic rather than extrinsic diameter, then in \ref{ptd i} it does not matter whether we consider the intrinsic or the extrinsic metric on each {induced} bag. }, and
    \item \label{ptd ii} the adhesion sets $G[V_i] \cap G[V_j]$ have uniformly bounded diameters in $d_G$.
    \end{enumerate}
    Then $G$ is quasi-planar.
\end{theorem}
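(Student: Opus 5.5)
Our plan is to build a planar graph $P$ by gluing planar models of the bags along a tree, and then to deduce quasi-planarity of $G$ from the results above.

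First, we normalise the decomposition. By \Lr{lem:TD exp} and \Rr{rem TD} we may assume that each $d_{G[V_i]}$ differs from $d_G$ on $V_i$ by bounded additive distortion, so each bag is quasi-planar in its intrinsic metric as well. For each $i$, fix an $(M,A)$-quasi-isometry $f_i\colon G[V_i]\to P_i$ with $P_i$ planar.

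Second, we build a single planar graph $P$. Take disjoint copies of the $P_i$. For every edge $ij$ of $T$, add a new path of length $1$ joining some vertex of $f_i(V_i\cap V_j)$ to some vertex of $f_j(V_i\cap V_j)$; we may assume adhesions are nonempty since $G$ is connected. Gluing planar graphs along a tree of single connecting edges keeps the result planar, even with infinitely many pieces, because $P$ is a tree of planar blocks joined at cut-edges. We then define $f\colon V(G)\to V(P)$ by sending $v$ to $f_{i(v)}(v)$, where $i(v)$ is a chosen bag containing $v$.

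Third, we prove that $f$ is a quasi-isometry, and this is the main obstacle. The upper bound on $d_P$ is easy: a geodesic in $G$ passes through a sequence of bags, and each transition happens inside an adhesion set of bounded diameter. So the path can be rerouted in $P$ with bounded additive cost per bag visited, but the number of bags visited could be unbounded relative to the length. To control this, we would instead use the fact that a geodesic between $u,v$ must meet every adhesion $V_i\cap V_j$ along the $T$-path between $i(u)$ and $i(v)$. Consecutive such adhesions are at $d_G$-distance $\ge 1$ unless they overlap, and overlapping ones can be merged because they lie in a common bag, where the bounded-diameter hypothesis lets us bound the cost. The lower bound follows from a similar separation argument in $P$: any path in $P$ must traverse the connecting edges along the $T$-path, and these correspond to adhesions in $G$ of bounded diameter.

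If this direct bookkeeping becomes messy, we would fall back on the tools above. Contract or merge bags with heavily overlapping adhesions, then view $G$ as quasi-isometric to a \cig\ of the glued planar graph, where each vertex $v$ corresponds to the connected subgraph spanned by its images in all bags containing it. We would then invoke \Cr{cor cigs} together with \Tr{Davies} to conclude that $G$ is quasi-planar.
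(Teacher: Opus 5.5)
\textbf{There is a genuine gap in your main route.} The map $f$, sending $v$ to $f_{i(v)}(v)$ in the graph $P$ obtained by joining the planar pieces with cut-edges, does not satisfy the Lipschitz upper bound. More bookkeeping cannot fix this, because $P$ itself is in general not quasi-isometric to $G$. The cause is that a single vertex may lie in unboundedly many bags.

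For a concrete example, let $G$ be the fan consisting of a path $x_0x_1\cdots x_n$ and a vertex $v$ adjacent to every $x_k$. Take the path-decomposition with bags $\{v,x_{k-1},x_k\}$, $1\le k\le n$. Every induced bag is a triangle and every adhesion $\{v,x_k\}$ has diameter $1$, so the hypotheses hold with $M=1$, $A=0$ and adhesion diameter $1$, uniformly in $n$. However, $x_0$ and $x_n$ lie only in the first and last bag respectively. So every $f(x_0)$--$f(x_n)$ path in $P$ crosses $n-1$ cut-edges, while $d_G(x_0,x_n)=2$. Indeed $P$ has diameter at least $n-1$, whereas $G$ has diameter $2$.

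Your proposed repair fails on exactly this example. All adhesions overlap (in $v$), but they do not lie in a common bag, and $P$ charges at least one edge for each of the $n-1$ bag transitions. The reverse bound you sketch is fine; only the upper bound breaks.

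\textbf{What you call a fallback is in fact the paper's proof, and it is necessary rather than optional.} As written, though, it omits the steps that make it work.
\begin{itemize}
\item \emph{Gluing.} The paper glues the pieces $C_i$ by identifying $f_i(v_{ij})$ with $f_j(v_{ij})$ for a chosen $v_{ij}\in V_i\cap V_j$. Your cut-edges would also work.
\item \emph{Connected subgraphs.} For each $v$, it builds a connected subgraph $T'_v$. Inside each $C_i$ with $i\in V(T_v)$, it joins $f_i(v)$ by geodesics to the junction points of the edges of $T_v$ at $i$. In your version you would also add the relevant connecting edges. These paths are what make $T'_v$ connected and what control its spread.
\item \emph{Key estimate.} Every vertex of $T'_v$ lies within $Mr+A$ of $f_j(v)$ in some $C_j$. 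This holds because $v$ and $v_{ij}$ lie in a common adhesion, whose intrinsic diameter is at most $r$ after the normalisation of Lemma~\ref{lem:TD exp}.
\item \emph{Cover.} One then takes the cover $\{T'_v\}_{v\in V(G)}\cup E(C)$. The edges of $C$ are included so that distances in the intersection graph are at most $d_C+1$.
\item \emph{Quasi-isometry check.} One verifies that $v\mapsto T'_v$ is a quasi-isometry as in the proof of Theorem~\ref{cm to qp gen}. The upper bound is proved edge by edge along a $G$-geodesic, using the local quasi-isometry of a bag containing that edge. The lower bound is proved by choosing quasi-preimages of the intersection points and applying the key estimate.
\end{itemize}
The resulting cover-intersection graph is a string graph but no longer planar, which is why Theorem~\ref{Davies} is indispensable here. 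It applies directly. Corollary~\ref{cor cigs} and any contracting or merging of bags are unnecessary.
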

Interestingly, the proof of this relies on \Tr{Davies}, and uses \cig s similarly to \Tr{cm to qp gen}.

More generally, we say that a class of graphs \cx\ is \defi{tree-closed} if a graph belongs to \cx\ if and only if it admits a tree decomposition of adhesion $1$ where each induced bag belongs to \cx. For example, any $\cx = Forb(M)$, where $M$ is a collection of 2-connected graphs is tree-closed.

Then \Tr{tree decomposition} can be stated more generally by replacing the class of planar graphs with any class which is coarsely contraction closed and tree-closed; see \Tr{tree decomposition gen}.

\subsection{Applications to groups}

MacManus \cite[Corollary~D]{McMAcc} proved that a finitely generated group is quasi-planar (i.e.\ it admits a finitely generated quasi-planar \Cg) \iff\ it is virtually-planar (i.e.\ it has a finite-index subgroup which admits a finitely generated planar \Cg). Combining this with \Tr{tree decomposition}, we will easily obtain 
the following:

\begin{corollary}
    Let $\Gamma$ be a finitely generated group that splits over a finite subgroup into virtually-planar factors\footnote{This means that $\Gamma$ is an  amalgamated free product $H *_C H'$ or an HNN extension $H *_\alpha$ whereby $H,H'$ are virtually-planar. We assume familiarity with these concepts; see e.g.\ \cite{LyndonSchupp} for an elementary treatment.}. Then $\Gamma$ is virtually-planar too. 
\end{corollary}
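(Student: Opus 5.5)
Write $\Gamma$ as the fundamental group of a graph of groups with one edge: either $\Gamma = H *_C H'$ or $\Gamma = H *_\alpha$, with $C$ finite and $H,H'$ virtually-planar. The plan is to build a Cayley graph of $\Gamma$ from the Bass--Serre tree, check that it satisfies the hypotheses of \Tr{tree decomposition}, conclude that $\Gamma$ is quasi-planar, and then apply MacManus' theorem \cite[Corollary~D]{McMAcc} to get virtual planarity.

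\textbf{Step 1: choosing generators.} Since $H$ and $H'$ are virtually-planar, they are finitely generated and quasi-planar. By MacManus' result, or simply by quasi-isometry invariance, every finitely generated Cayley graph of $H$ (resp.\ $H'$) is $(M,A)$-quasi-planar for some constants. Choose a finite generating set $S_H$ of $H$ containing $C$, and likewise $S_{H'}$ of $H'$ containing $C$. In the HNN case we also add the stable letter $t$. Let $S$ be the union of these sets, and let $G = Cay(\Gamma,S)$.

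\textbf{Step 2: the tree-decomposition.} Let $T$ be the Bass--Serre tree of the splitting. Its vertices are the cosets $gH$ and $gH'$ (in the HNN case, only the cosets $gH$). Take the bag of a vertex $gH$ to be the coset $gH$ itself, viewed as a vertex set of $G$. Then:
\begin{itemize}
\item The bags partition $V(G)$ in the amalgam case.
\item An edge of $G$ labelled by a generator of $H$ stays within a single coset.
\item An edge $\{g, gs\}$ with $s \in S_{H'}$ crosses from the bag $gH$ to the adjacent bag $gH'$.
\end{itemize}
Because a partition is not yet a tree-decomposition, I will enlarge each bag $gH$ by $gHC'$, its $1$-neighbourhood in the $H'$-direction. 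More precisely, I add to $gH$ the vertices $ghs$ for $s\in S_{H'}$ with $ghs$ lying in a neighbouring coset $ghH'$; symmetrically for the bags $gH'$. The aim is that every edge of $G$ lies in some bag and that each vertex's bags form a subtree of $T$. Consecutive bags $gH$ and $gH'$ then intersect in a set within bounded distance of $gC$, which has diameter at most $1 + 2$ since $C \subseteq S$. This gives condition \ref{ptd ii} of \Tr{tree decomposition}.

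\textbf{Step 3: the bags are quasi-planar.} Each enlarged bag is within Hausdorff distance $1$ of the coset $gH$. Left multiplication by $g$ is an isometry of $G$ sending $H$ to $gH$, so all bags of one type are uniformly quasi-isometric, and there are only two types. It remains to compare $d_G$ restricted to $H$ with $d_{Cay(H,S_H)}$. This is where I expect the main obstacle: I need $H$ to be undistorted in $\Gamma$, with uniform constants. I will argue this via the tree structure: a geodesic in $G$ between two points of $gH$ that leaves the bag must return through the same adhesion set $gC$, by the separation property of tree-decompositions. Hence each excursion can be replaced by a path of length at most $3$ inside the bag. This shows that $G[V_i]$ is connected and that its intrinsic metric is bi-Lipschitz to $d_G$, and in turn to $d_{Cay(H,S_H)}$. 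So condition \ref{ptd i} holds with uniform constants.

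\textbf{Step 4: conclusion.} \Tr{tree decomposition} now yields that $G$ is quasi-planar. MacManus' theorem \cite[Corollary~D]{McMAcc} then gives that $\Gamma$ is virtually-planar. The HNN case is identical, except that $T$ has a single orbit of vertices and the adhesions are $gC$ and $gt\alpha(C)$, both of which are finite and have bounded diameter.
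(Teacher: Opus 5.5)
Your proposal is correct and follows essentially the paper's own argument: generating sets containing $C$, the Bass--Serre coset tree-decomposition of the Cayley graph, \Tr{tree decomposition}, then MacManus. Your Step~3 usefully makes explicit the undistortedness of the factors, which the paper leaves to ``easily'' and to \Lr{lem:TD path}/\Lr{lem:TD exp}.

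One correction: the coset bags do not partition $V(G)$. Each $g$ lies in exactly the two adjacent bags $gH$ and $gH'$, and an edge $\{g,gs\}$ with $s\in S_{H'}$ lies inside $gH'$ rather than crossing between bags. So in the amalgam case the plain cosets already form a tree-decomposition with adhesions exactly $gH\cap gH'=gC$, and an excursion from $gH$ returns through some $ghC$, not necessarily $gC$. Your enlargement is harmless there, but it is genuinely needed in the HNN case to absorb the edges labelled by $t$.
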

\begin{proof}[Proof (sketch)]
Suppose $\Gamma$ is an  amalgamated free product $H *_C H'$ with $C$ finite (the HNN case is similar). Let $S,S'$ be finite generating sets of $H,H'$ respectively, each containing all of $C$, and let $G$ be the \Cg\ of $\Gamma$ \wrt\ $S \cup S'$. Easily, \g admits a tree decomposition where each bag is a coset of either $H$ or $H'$, and each adhesion set is a coset of $C$. Applying \Tr{tree decomposition} to this tree decomposition we deduce that $G$ is quasi-planar, hence virtually-planar by MacManus's aforementioned theorem \cite{McMAcc}.
\end{proof}
    
This is probably known to experts, and Joseph MacManus (private communications) informs us that it can also be proved using Bass-Serre theory and known facts about (virtual) surface groups, but the coarse-graph-theoretic proof we provided may appeal more to some readers. 

\medskip

One of the most fundamental facts of geometric group theory is that if $\Gamma'$ is a finite index subgroup of a group $\Gamma$, then any two finitely generated \Cg s of $\Gamma, \Gamma'$ are \qic. Using our edge-sliding technique similarly to the proof of \Tr{pl iff subd iter str} we will obtain the following strengthening: 

\begin{corollary} \label{cayley}
    Let $\Gamma$ be a finitely generated group and let $\Gamma' \leq \Gamma$ be a finite index subgroup. Let $S, S'$ be generating set of $\Gamma, \Gamma'$ respectively. Then the Cayley graph $G = Cay(\Gamma, S)$ is an iterated \scig\ of $G'= Cay(\Gamma',S')$, using at each step a cover that is $\Gamma'$-invariant.
\end{corollary}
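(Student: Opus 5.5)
To prove \Cr{cayley}, I will realise $G=Cay(\Gamma,S)$ from $G'=Cay(\Gamma',S')$ by a bounded number of steps, each being either a bounded subdivision or a \cig\ with bounded-diameter pieces. At every step I will choose the cover to be invariant under the left action of $\Gamma'$, which acts on both graphs. The strategy is the one used for the forward direction of \Tr{pl iff subd iter str} via \Tr{qi implies iter scig}. First I build one explicit quasi-isometry that is $\Gamma'$-equivariant. Then I convert it into iterated \scig\ steps, checking at each stage that the construction can be made $\Gamma'$-equivariant.

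\textbf{Step 1: an equivariant coarse picture.} Fix coset representatives $1=t_1,\dots,t_k$ of $\Gamma'\backslash\Gamma$, so that $\Gamma=\bigsqcup_j \Gamma' t_j$. Every vertex $g\in\Gamma$ is then uniquely $g=ht_j$ with $h\in\Gamma'$. Let $\pi(ht_j)=h$; this map $\pi$ is $\Gamma'$-equivariant and $k$-to-$1$.

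\textbf{Step 2: blowing up $G'$.} Subdivide each edge of $G'$ and take a \cig\ whose pieces are the stars of vertices, possibly repeated $k$ times. This produces a graph $G'_1$ whose vertex set is in $\Gamma'$-equivariant bijection with $\Gamma'\times\{1,\dots,k\}$, and hence with $\Gamma$. In $G'_1$, two vertices are adjacent whenever their $\pi$-images are adjacent or equal. Repeating pieces in the cover is allowed, since the cover is a family. The result is $\Gamma'$-invariant and quasi-isometric to $G'$.

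\textbf{Step 3: edge-sliding to $G$.} Now $G'_1$ and $G$ live on the same vertex set $\Gamma$ with $\Gamma'$ acting freely, and the identity map is a quasi-isometry. Each edge $\{g,gs\}$ of $G$ joins vertices at bounded distance in $G'_1$, and conversely. So I will apply the edge-sliding procedure from \Sr{sec ES}, which moves the endpoints of edges along paths of bounded length by iterating subdivision followed by a \cig. Doing this $\Gamma'$-orbit by orbit keeps each cover $\Gamma'$-invariant. There are finitely many orbits of edges, because $S$ and $k$ are finite, so the number of iterations is bounded. Each single slide is performed as follows: subdivide an edge $uv$ by adding a midpoint $m$, then take a cover containing the path $m\,v\,w$ merged into the piece representing $m$. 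This replaces $uv$ by $uw$ while keeping every other piece a single vertex or edge, and it is applied simultaneously to the whole $\Gamma'$-orbit.

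\textbf{Main obstacle.} The hardest part is performing these slides simultaneously along a $\Gamma'$-orbit without interfering pieces. Translates of the path $m\,v\,w$ may overlap and create unwanted adjacencies; if $v$ or $w$ is swallowed into two pieces, extra edges appear. I would first try to use the freeness of the $\Gamma'$-action together with the subdivision step, which makes the midpoints distinct. I would then assign the shared vertices $v,w$ to their own singleton pieces, so that absorbing only the midpoint $m$ suffices. Any leftover extra edges would be deleted by one further round of slides.
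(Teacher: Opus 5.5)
Your Steps 1 and 2 are sound, and slightly more direct than the paper. Repeating each star $k$ times gives the lexicographic blow-up of $G'$ via a $\Gamma'$-invariant cover. The map $(h,j)\mapsto ht_j$ is then bi-Lipschitz onto $G$: the finitely many elements $t_i^{-1}t_j$ and $t_i^{-1}s't_j$ have bounded $S$-length, and writing $t_js=h''t_{j'}$, the finitely many $h''\in\Gamma'$ have bounded $S'$-length. This replaces the paper's reduction to $S=S'\cup T$ and its pendant-clique graph $G''$.

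The gap is Step 3, which is where the whole difficulty lies. The single slide you describe is not a valid \cig\ step:
\begin{enumerate}
\item If the untouched pieces are single vertices and edges, the intersection graph does not reproduce the rest of the graph. Singletons are pairwise disjoint, and edge pieces become extra vertices.
\item A separate ``piece representing $m$'' contributes a vertex with no counterpart in the target.
\item If you instead merge $m\,v\,w$ into the star of $u$, that piece still meets the piece of $v$, so $uv$ survives.
\end{enumerate}
More seriously, the interference you flag as the main obstacle is left unresolved, and it is unavoidable. A $\Gamma'$-invariant cover forces all translates of a slide to be performed at once. These translates share pivots and slider vertices, so the enlarged pieces meet and create edges lying in neither graph. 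Your proposed fixes do not work. Freeness does not help, since the collisions occur at original vertices, not at midpoints. Turning $v$ and $w$ into singleton pieces makes them disjoint, which loses the slider $vw$ itself. Deleting leftover edges ``by one further round of slides'' is circular: that round faces the same interference, and nothing guarantees termination.

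The missing idea is the paper's frame-based mechanism. By \Lr{lem: action on biL}, the bi-Lipschitz equivalence decomposes into boundedly many edge-sliding twin steps with equivariantly chosen sliding paths. In each step every moving edge slides along the frame, the common subgraph of the two twins, which itself never moves. Each twin pair is then realised by the two \scig\ steps of \Lr{lem: est implies cig}:
\begin{enumerate}
\item Cover the subdivided frame by vertex-bags and edge-bags, extending the pivot's vertex-bag and the slider's edge-bag into the $3$-subdivided moving edge.
\item Cover the result by clique-bags $K^v$, extended through the $2$-subdivided green-labelled edges.
\end{enumerate}
Passing through this intermediate graph, whose vertices are the frame's vertices and edges, is what lets all slides of a step happen simultaneously. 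Every overlap of enlarged bags, including slider--slider overlaps, is recorded by a green label, and the clique-bags turn it into exactly the intended adjacency of the target twin. \Lr{lem: action on est} then shows that the covers are $\Gamma'$-invariant once the labelling maps $T,S$ and the Step~2 label assignment are chosen equivariantly. If you replace your ad hoc slides by these lemmas, your Steps 1--2 yield the corollary along the same lines as the paper.
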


\section{Preliminaries}

We follow the terminology of Diestel~\cite{DiestelBook25} for graph-theoretic terms, and that of  Godsil \& Royle \cite{GR01} for the few group theoretic ones that we will need.
Throughout the paper, we assume all graphs to be countable.

\subsection{Graphs} \label{sec gra}

We say that $G$ is a \defi{contraction minor} of a graph $H$, if \g is obtained from $H$ by contracting edges. If the maximal connected subgraphs that are contracted have diameter bounded by some $r \in \R$, we say that $G$ is an \defi{$r$-shallow contraction minor} of $H$.

\medskip

Given a family of sets $\{S_i\}_{i\in \ci}$, we define its \defi{\ing} to be the graph $S$ with vertex set $\ci$ in which $S_i S_j$ is an edge whenever $S_i$ and $S_j$ intersect. 




\subsection{Distances} \label{sec dist}

A \defi{geodesic metric space} is a metric space $(X,d)$ such that for every $x,y \in X$ there is an $x\text{-}y$ arc of length $d(x,y)$. 

Let $G$ be a graph.
We write~\defi{$d_G(v, u)$} for the distance of the two vertices~$v$ and~$u$ in~$G$. Note that this turns \g into a geodesic metric space. In particular, for any $x, y \in V(G)$ such that $d(x,y)< \infty$, there is a \defi{geodesic path} $P$ of length ${||P||} = d(x,y)$.

Given a graph \g and $K \in \N$, we denote with $G^K$ the $K$-th power of $G$, i.e.\ the graph on $V(G)$ with edge-set $\{xy \mid d_G(x,y)\leq K\}$.

For two sets~$U$ and~$U'$ of vertices of~$G$, we write~\defi{$d_G(U, U')$} for the minimum distance of two elements of~$U$ and~$U'$, respectively.
If one of~$U$ or~$U'$ is just a singleton, then we omit the braces, writing $d_G(v, U') := d_G(\{v\}, U')$ for $v \in V(G)$.


Given a set~$U$ of vertices of~$G$, the \defi{ball (in~$G$) around~$U$ of radius $r \in \N$}, denoted by~\defi{$B_G(U, r)$}, is the set of all vertices in~$G$ of distance at most~$r$ from~$U$ in~$G$.
If~$U = \{v\}$ for some~$v \in V(G)$, then we omit the braces, writing~$B_G(v, r)$ for the ball (in $G$) around~$v$ of radius~$r$.


\medskip

For $M \in \R_{\geq 1}$ and $A \in \R_{\geq 0}$, an \defi{$(M, A)$-quasi-isometry} from a graph~$G$ to a graph~$H$ is a map~$\varphi : V(G) \rightarrow V(H)$ such that
\begin{enumerate}[label=\rm{(Q\arabic*)}]
    \item \label{quasiisom:1} $M^{-1} \cdot d_G(g,h) - A \leq d_H(\varphi(g),\varphi(h)) \leq M\cdot d_G(g,h)+A$ for every $g,h \in V(G)$, and
    \item \label{quasiisom:2} for every $v \in V(H)$ there is $h \in V(G)$ such that $d_H(v,\varphi(h)) \leq A$.
\end{enumerate}

When this holds for $M=1$, we say that $\varphi$ has \defi{additive distortion} (at most) $A$.

\medskip

\comment{
\begin{lemma} \label{dC1C2}
    Let $f: G \rightarrow H$ be a $(M,A)$-quasi-isometry between graphs \g and $H$. Let $C_1, C_2$ be connected subgraphs of $G$. Let $C_1', C_2'$ denote the connectifications of $f(C_1)$ and $f(C_2)$ respectively. Then 
    \begin{equation} 
        \frac 1M \cdot d_G(C_1, C_2) - 2M - 3A \leq d_H(C_1', C_2') \leq M \cdot d_G(C_1, C_2) + A
    \end{equation}
\end{lemma}

\begin{proof}
    Take
    \begin{enumerate}
        \item{$x_1 \in C_1$, $x_2 \in C_2$, such  that $d_G(C_1, C_2) = d_G(x_1, x_2)$,}
        \item{$z_1 \in C_1$, $z_2 \in C_2$, such  that $d_{H}(C_1', C_2') = d_{H}(y_1, y_2)$,}
        \item{$y_1 \in C_1'$, $y_2 \in C_2'$, such that $d_{H}(f(z_1), y_1) \leq M+A$ and $d_{H}(f(z_2), y_2) \leq M+A$.}
    \end{enumerate}
        
    On one hand we have
    \begin{equation*}
        d_{H}(C_1', C_2') \leq d_{H}(f(x_1), f(x_2)) \leq M \cdot d_G(x_1, x_2)+A = M \cdot d_G(C_1, C_2)+A.
    \end{equation*}

    On the other hand, by the triangular inequality,
    \begin{equation*}
        d_{H}(C_1', C_2') = d_{H}(y_1, y_2) \geq d_H(f(z_1), f(z_2)) - d_{H}(f(z_1), y_1) - d_{H}(f(z_2), y_2).
    \end{equation*}
    Recalling (iii), we also have that
    \begin{equation*}
        d_H(f(z_1), f(z_2)) - d_{H}(f(z_1), y_1) - d_{H}(f(z_2), y_2) \geq d_H(f(z_1), f(z_2)) - 2M -2A,
    \end{equation*}
    and using that $f$ is a $(M,A)$-quasi-isometry, we have
    \begin{equation*}
    d_H(f(z_1), f(z_2)) - 2M -2A \geq \frac{1}{M} \cdot d_G(z_1, z_2) - 2M -3A \ge \frac{1}{M}  \cdot d_G(C_1, C_2) - 2M - 3A.
    \end{equation*}

\end{proof}
}

Let $(X,d)$ be a metric space and let $\{x_i\}_{i \in \ci}$ be a collection of points of $X$. For each $i \in \ci$, the \defi{Voronoi cell} $R_i$ is the set of all points in $X$ whose distance from $x_i$ is not greater than their distance from any other $x_j$.
Namely, \\ $R_i=\left\{x \in X \mid d\left(x, x_i\right) \leq d\left(x, x_j\right) \text { for all } j \neq i\right\}$.

Note that Voronoi cells cover $X$ and, after an appropriate refinement to avoid overlapping, we can assume they give a partition of $X$. We will call such a partition a \defi{Voronoi cell decomposition} of $X$ with respect to $\{x_i\}_{i \in \ci}$. Easily, if $(X,d)$ is a geodesic metric space, then $R_i$ is always connected, as it contains a geodesic from each $x\in R_i$ to $x_i$.

\medskip


\subsection{Tree-decompositions} \label{sec TD}

A \defi{tree-decomposition} of a graph $G$ is a pair $(T, (V_t : t \in V(T)))$, where $T$ is a tree, and $V_t$ is a subset of $V(G)$ for each $t \in V(T)$, such that:
\begin{enumerate}
    \item \label{TDi} $V(G) = \bigcup_{t \in V(T)} V_t$;
    \item \label{TDii}  for every edge $e = uv$ of $G$, there exists $t \in V(T)$ with $u, v \in V_t$; and
    \item \label{TDiii} for all $t_1, t_2, t_3 \in V(T)$, if $t_2$ lies on the path of $T$ between $t_1, t_3$, then $V_{t_1} \cap V_{t_3} \subseteq V_{t_2}$.
\end{enumerate}
We will refer to each set $V_t$ as a \defi{bag} {and to each induced subgraph $G[V_t]$ of $G$ as an  \defi{induced bag}}.

\section{Proofs of \Trs{cm to qp gen} and \ref{cig to cm}}

We will now prove our results showing that contraction minors and \cig s differ by quasi-isometries.
For \Tr{cm to qp gen}, we start with a simple lemma. The \defi{Hausdorff distance $\Hd( X , Y )= \Hd_G( X , Y )$} between 
subsets $X,Y$ of a metric space $G$ is defined as $\max \left\{ \sup_{x \in X} d(x,Y) ,   \sup_{y\in Y} d(X , y) \right \}$. 


\begin{lemma} \label{lem confn}
    Let $f: V(G) \rightarrow V(H)$ be a $(M,A)$-quasi-isometry between graphs \g and $H$. Let $C$ be a connected subgraph of $G$, and let $C'=B_H(f(C), M+A)$. 
    Then $C'$ is connected and    
    %
    %
    %
    \labequ{confn i}{\Hd_G(C,f^{-1}(B_H(C',A)))\leq M^2+3AM.}
\end{lemma}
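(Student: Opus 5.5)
Write $C'$ also for the induced subgraph $H[C']$. The proof has two parts: connectivity of $C'$, and the two one-sided bounds that make up the Hausdorff distance in \eqref{confn i}.

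\textbf{Connectivity.} Every vertex of $C'$ is within distance $M+A$ of some $f(x)$ with $x\in V(C)$. A geodesic from that vertex to $f(x)$ stays inside $B_H(f(x),M+A)\subseteq C'$. So it suffices to show that the points $f(x)$, $x\in V(C)$, all lie in one component of $C'$.

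Take an edge $xy$ of $C$. By \ref{quasiisom:1}, $d_H(f(x),f(y))\le M+A$. A geodesic from $f(x)$ to $f(y)$ then has every vertex within $M+A$ of $f(x)$, so it lies in $C'$. Since $C$ is connected, we can chain such geodesics along paths of $C$, which connects all the points $f(x)$ inside $C'$.

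\textbf{First bound: $C$ is close to $f^{-1}(B_H(C',A))$.} Every $x\in V(C)$ satisfies $f(x)\in C'\subseteq B_H(C',A)$. Hence $x\in f^{-1}(B_H(C',A))$ itself, so this direction costs $0$.

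\textbf{Second bound: $f^{-1}(B_H(C',A))$ is close to $C$.} This is the main step. Take $z\in V(G)$ with $f(z)\in B_H(C',A)$. Then $d_H(f(z),f(x))\le (M+A)+A=M+2A$ for some $x\in V(C)$. Applying the lower bound in \ref{quasiisom:1},
\[
M^{-1} d_G(z,x)-A\le M+2A,
\]
so
\[
d_G(z,C)\le d_G(z,x)\le M(M+3A)=M^2+3AM.
\]
This is exactly the constant in \eqref{confn i}.

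Combining the two directions gives \eqref{confn i}. The only point needing care is the connectivity argument: geodesics between images of adjacent vertices must stay inside the ball, and they do because the radius $M+A$ is at least the image distance of adjacent vertices.
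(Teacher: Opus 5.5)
Your proof is correct and follows essentially the same route as the paper. Connectivity comes from joining the images of adjacent vertices of $C$ by geodesics that stay inside the $(M+A)$-ball, and the nontrivial half of the Hausdorff bound comes from $d_H(f(z),f(x))\le M+2A$ together with the lower quasi-isometry inequality, giving $M^2+3AM$. You also make explicit the trivial inclusion $V(C)\subseteq f^{-1}(B_H(C',A))$, which the paper leaves implicit.
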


\begin{proof}
    To show that $C'$ is connected, it suffices to check that for any $x,y \in C$, there is a $f(x)$-$f(y)$ path contained in $C'$.
    This follows easily from the fact that $f$ is a $(M,A)$-quasi-isometry, and any adjacent vertices on a $x$-$y$-path in $G$ are mapped by $f$ to vertices at distance at most $M+A$ in $H$.

    To prove \eqref{confn i}, pick an arbitrary $x \in f^{-1}(B_H(C',A))$.
    Easily $\Hd_H(f(C),B_H(C',A))\leq$ \\$M+2A$, hence we can find some $x' \in C$ such that $d_H(f(x'), f(x)) \leq M+2A$.
    Since $f$ is a quasi-isometry, we also have $d_G(x',x) \leq M d_H(f(x'), f(x)) +AM \leq M(M+2A) + AM$.
    Hence $d_G(C, x) \leq M^2+3AM$.
\end{proof}

Now, we prove \Tr{cm to qp gen}:

\begin{proof}[{Proof} of \Tr{cm to qp gen}]
Let $f: G \rightarrow H$ be a $(M,A)$-quasi-isometry. Let $G'$ be a contraction minor of \G, and let $\cc= \{C_i, i \in \ci\}$ be the set of components of the subgraph of \g spanned by the contracted edges. 
Thus $G'=G/\cc$, i.e.\ $G'$ is obtained from $G$ by contracting each element of \cc\ into a vertex.
We may assume \obda\ that $\bigcup_{i \in \ci} C_i$ contains all vertices of \g by adding singletons to \cc\ until this is satisfied.

Let $C_i' := B_H(f(C_i),M+A)$ for ach $i\in \ci$.
Let $S$ be the cover-intersection-graph of $H$ with respect to the collection $\cs:= \{C'_i \mid i\in \ci\} \cup E(H)$ of subgraphs of $H$. Note that $S$ is a cover intersection graph of $H$ by definition. We will show that $g: V(G') \to V(S)$ defined by $C_i \mapsto C'_i$ is a quasi-isometry.

To begin with, note that \fe\  $i,j \in \ci$, we have
    \begin{equation} \label{dSdHi}
        d_S(C_i', C_j') \leq  d_H(C_i', C_j') + 1        
    \end{equation}
because if $Q$ is a \pth{C_i'}{C_j'}\ in $H$, then its edges are vertices of $S$ that combined with $C_i',C_j'\in V(S)$ form a \pth{C_i'}{C_j'}\ in $S$ (this is the reason why we added $E(H)$ to $V(S)$).
    
To check that $g$ satisfies \ref{quasiisom:1}, pick $C_k,C_m\in V(G')$, and a shortest \pth{C_k}{C_m}\ $P$ in $G'$, so that $d_{G'}(C_k,C_m)=||P||$.
Note that for every pair of consecutive vertices $C_i,C_j$ of $P$ we have $d_H(C'_i,C'_j) \leq d_H(f(C_i),f(C_j))\leq M+A$, and combining this with \eqref{dSdHi} we deduce $d_S(C'_i,C'_j)\leq M+A+1$. Applying this to each edge of $P$ we obtain
$$d_{S}(C'_k,C'_m)\leq (M+A+1)\cdot||P||= (M+A+1) \cdot d_{G'}(C_k,C_m).$$
Conversely, pick a shortest \pth{C'_k}{C'_m}\ $P'=(C'_k=)v_0 v_1 \ldots v_n (=C'_m)$ in $S$, so that $d_{S}(C'_k,C'_m)=||P'||$. Pick a vertex $x_i\in v_{i-1} \cap v_i$ of $H$ \fe\ $1\leq i \leq n$, which exists since $v_{i-1} v_i\in E(S)$. Let $\kreis{x}_i$ be a vertex of $G'$ corresponding to some vertex $x'_i$ (of \G) in $f^{-1}(B_H(x_i,A))$, which exists since $f$ is $A$-quasi-surjective. Note that \fe\ $i$, both $x_{i-1} x_i$ belong to an edge of $H$ or to some $C'_r$.
If the former is the case, we have $d_{G'}(\kreis{x}_{i-1}, \kreis{x}_i)\leq d_G(x_{i-1}', x_{i}') \leq M(d_H(f(x_{i-1}'),f(x_i'))+A) \leq M(d_H(x_{i-1}, x_i)+3A) \leq M(3A+1)$.
If the latter is the case, we have $d_{G'}(\kreis{x}_{i-1}, \kreis{x}_i) \comment{\leq d_{G}(x'_{i-1}, x'_i)} \leq d_{G}(x'_{i-1}, C_r) + d_{G}(x'_{i}, C_r)$ $ \leq 2 M^2 + 6AM$ by \eqref{confn i} from \Lr{lem confn}.
Moreover, both $d_{G'}(C_k,\kreis{x}_0), d_{G'}(C_m,\kreis{x}_n)$ are also upper-bounded by $M^2 + 3AM$ for the same reason. Applying one of these inequalities to each edge of $P'$ we deduce 
$$d_{G'}(C_k,C_m) \leq (2 M^2 + 6AM) \cdot ||P'|| = (2 M^2 + 6AM) \cdot d_{S}(C'_k,C'_m).$$ 

\medskip
To check the quasi-surjectivity property \ref{quasiisom:2}, for any edge $e \in V(S)$ consider one of its endpoints $y \in V(H)$. Since $f$ is $A$-quasi-surjective, there is some $x \in V(G)$ such that $y \in B_H(f(x),A)$. Choosing the $i\in \ci$ such that $x \in V(C_i)$, we have $d_H(e, C_i') \leq A$, and so $d_S(e, C'_i) \leq A+1$.

Thus $g$ is an $(M',A')$-quasi-isometry for $A':=A+1$ and $M':=2M^2+6AM+1$, where we used the fact that $M\geq 1$ to deduce that $M'\geq M+A+1$.
\end{proof}

One can adapt the proof of \Tr{cm to qp gen} to the more general setting where $G$ a geodesic metric space (not necessarily a graph), and, by applying \Tr{Davies}, obtain the following strengthening {of \Tr{cm to qp}}.

\begin{corollary} \label{cor: metr contr}
    Let $G$ be a geodesic metric space quasi-isometric to a planar graph. Then every space $G'$ obtained from $G$ by contracting connected sub-spaces is quasi-isometric to a planar graph.
\end{corollary}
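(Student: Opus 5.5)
The plan is to reduce the corollary to the graph setting, where \Tr{cm to qp gen} and \Tr{Davies} are available.

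\emph{Setup.} Let $G$ be a geodesic metric space and $f\colon G\to P$ an $(M,A)$-quasi-isometry to a planar graph $P$. Let $G'=G/\cc$, where $\cc=\{C_i\}_{i\in\ci}$ is a family of pairwise disjoint connected subspaces, and add singletons so that $\cc$ partitions $G$. We equip $G'$ with the quotient pseudometric. Between two classes $C_k,C_m$ it is the infimum, over chains $C_k=C_{i_0},C_{i_1},\dots,C_{i_n}=C_m$, of $\sum_j d_G(C_{i_{j-1}},C_{i_j})$; this agrees with the metric induced on $G'$ by paths that may jump for free inside each $C_i$.

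\emph{Construction.} Exactly as in the proof of \Tr{cm to qp gen}, set $C_i':=B_P(f(C_i),M+A)$. Let $S$ be the intersection graph of $\{C_i'\}_{i\in\ci}\cup E(P)$. Connectedness of $C_i'$ follows as in \Lr{lem confn}: a geodesic space has the property that any two points of a connected subspace $C$ are joined by an \emph{$\epsilon$-chain} in $C$ (a finite sequence of points of $C$ with consecutive distances at most $\epsilon$) for every $\epsilon>0$. Taking $\epsilon=1$, consecutive points of such a chain map to points at distance at most $M+A$ in $P$. Hence $S$ is a \cig\ of $P$, so it is a string graph and is quasi-isometric to a planar graph by \Tr{Davies}. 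The estimate \eqref{confn i} of \Lr{lem confn} goes through verbatim, since it uses only the quasi-isometry inequalities.

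\emph{Quasi-isometry of $G'$ and $S$.} Define $g\colon G'\to V(S)$ by sending each point of $C_i$ to $C_i'$; this is well defined on $G'$. For the lower distortion bound, take a shortest path in $S$ and argue exactly as in the graph proof. Each step of the path yields points of $G$ whose images lie in a common $C_r'$ or a common edge of $P$, so their classes are at $d_{G'}$-distance at most $2M^2+6AM$, respectively $M(3A+1)$. Quasi-surjectivity is identical to the graph case.

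\emph{Upper bound (main obstacle).} In a graph one walks along a path in $G'$ edge by edge. Here $d_{G'}$ is defined by chains of arbitrarily small steps, possibly passing through many classes. I would handle this by discretizing a near-optimal chain between $C_k$ and $C_m$. Choose $d_G$-geodesic segments between consecutive classes and free jumps inside each class. Then subdivide each segment into pieces of length at most $1$, and record for each subdivision point the class containing it. Consecutive recorded classes $C_a,C_b$ satisfy $d_G(C_a,C_b)\le 1$, so $d_P(f(C_a),f(C_b))\le M+A$ and hence $d_S(C_a',C_b')\le M+A+1$ by \eqref{dSdHi}. Jumps inside a class cost nothing in $S$. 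The number of subdivision points is at most the chain length plus one per segment, which is a problem if the chain contains many tiny segments.

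To fix this, I would first cut a near-optimal chain into consecutive blocks of total length about $1$. Each block starts in some class $C_a$ and ends in some class $C_b$ with $d_{G'}$ along the block at most $1$. The issue is whether this implies $d_G(C_a,C_b)$ is bounded. It does not directly, because intermediate classes may be large. So instead I would bound $d_S$ per block via a chain of classes whose consecutive $d_G$-gaps are tiny. The sum of gaps within a block is at most $1$, so their images under $f$ form a chain in $P$ whose total length is controlled only up to an additive $A$ per gap. This loses control as the number of gaps grows.

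The remedy I would try is to replace $G$ by a graph first. Take a maximal $1$-separated net $N\subseteq G$ with edges between points at distance at most $3$. This graph $\Gamma$ is quasi-isometric to $G$, since $G$ is geodesic. Each $C_i$ is replaced by the connected subgraph $\Gamma_i$ spanned by net points within distance $1$ of $C_i$; connectedness follows again from $1$-chains. Classes that now overlap must be merged. I would then check that $G'$ is quasi-isometric to the resulting contraction minor of $\Gamma$ and apply \Tr{cm to qp gen} to $\Gamma$. The hard step is this comparison: merged unions of bounded-distance classes correspond to paths of small $G'$-length, and I expect this to need a careful argument showing that $d_{G'}$ and the contracted-graph metric agree up to multiplicative and additive constants.
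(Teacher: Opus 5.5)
Your connectedness, lower-bound and quasi-surjectivity steps are fine, and you are right that the edge-by-edge upper bound does not transfer. But the proposal stops there, and the comparison you defer in your remedy is false, not merely hard. Take $G=\mathbb{R}$, $P$ the two-way infinite path on $\mathbb{Z}$, $f(x)=\lfloor x\rfloor$ (a $(1,1)$-quasi-isometry), and contract $C_n=[n,n+1-\eta]$ for every $n\in\mathbb{Z}$, with $0<\eta<1$ fixed. Then $G'$ is isometric to $\mathbb{R}$, with $d_{G'}(C_0,C_K)=K\eta$. For the net $\mathbb{Z}$ your sets are $\Gamma_n=\{n-1,n,n+1\}$. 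Consecutive ones overlap, so merging collapses everything to a single vertex, although $G'$ is unbounded. Without merging you get $\Gamma$ itself, whose metric differs from $d_{G'}$ by the factor $1/\eta$; if $\eta$ is allowed to vary with $n$, the natural map is not a quasi-isometry at all. No discretisation of $G$ at a fixed scale can see how $d_{G'}$ accumulates many sub-unit gaps.

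The same example defeats the class-indexed map $g\colon C_i\mapsto C_i'$. Here $C_n'=B_P(n,2)$, so $d_S(C_0',C_K')\ge K/4-1$, while $d_{G'}(C_0,C_K)\le 1$ for $K=\lfloor 1/\eta\rfloor$. The paper keeps this map and derives $d_S(g(x),g(y))\le (M(2R+1)+A+1)\lceil d_{G'}(x,y)\rceil$ from the $1$-chain remark for geodesic spaces. That step tacitly treats a step of $d_{G'}$-length at most $1$ as a step of $d_G$-length at most $1$. So your objection applies there too, and you cannot simply follow the paper at this point. The statement itself is not threatened: in the example $G'$ is a line.

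The missing idea is to index the cover by a net of $G'$, rather than by the classes or by a net of $G$. Let $\pi\colon G\to G'$ be the quotient map and $N'\subseteq G'$ a maximal $\rho$-separated set with $\rho>\max\{MA,1\}$. Lifts of distinct points of $N'$ are $\rho$-apart in $G$ (as $d_{G'}\le d_G$), hence have distinct $f$-images, so $N'$ is countable. This also repairs a second problem in your setup: adding singletons until $\cc$ partitions $G$ makes $S$ uncountable, whereas \Tr{Davies} concerns countable graphs.

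For $n\in N'$ let $X_n:=\pi^{-1}\bigl(B^\circ_{G'}(n,2\rho)\bigr)$, where $B^\circ$ denotes the open ball. $X_n$ is connected in $G$: every $y\in X_n$ is joined to the class $\pi^{-1}(n)$ by a chain of total gap length less than $2\rho$, and all classes and geodesic gap segments of that chain lie in $X_n$. Let $S$ be the intersection graph of $\{B_P(f(X_n),M+A)\}_{n\in N'}\cup E(P)$; by the $\epsilon$-chain version of \Lr{lem confn} it is a countable string graph. Send each $p\in G'$ to the set indexed by a net point within $\rho$ of $p$.

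For the upper bound, suppose $d_{G'}(p,q)\le 1$. The two net points are then at $d_{G'}$-distance less than $2\rho+1<4\rho$. Cutting a near-optimal chain between them at its midpoint gives a common point of their open $2\rho$-balls, so the corresponding sets intersect. Hence each unit of $d_{G'}$ costs at most one step in $S$. The lower bound and quasi-surjectivity go through as in the paper, using \eqref{confn i}, $d_{G'}\le d_G$ and $\pi(X_n)\subseteq B_{G'}(n,2\rho)$. Then \Tr{Davies} finishes, with constants depending only on $M,A$.
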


\begin{proof}
    We will follow the lines of \Tr{cm to qp gen}, except that $H$ is now planar, and we will apply \Tr{Davies} to the resulting \cig\ of $H$.
    
    We start with an easy remark that will allow us to adapt any argument involving ``adjacent vertices'' on a geodesic to our more general setting:
    if $G$ is a geodesic metric space and $x,y \in G$ with $d_G(x,y) < \infty$, then
    \labtequ{rem geod metr}{
        there are $x=x_0, \dots, x_{\lceil d_G (x,y) \rceil}=y$ in $G$ such that $d_G(x_i, x_{i+1}) \leq 1$ for every $1 \leq i \leq \lceil d_G (x,y) \rceil$.
    }
    \Lr{lem confn} can be easily generalized to the case where $G$ is a geodesic metric space and $C$ is a connected sub-space, by applying \eqref{rem geod metr} when checking that $C'$ is connected.

    As for the proof of \Tr{cm to qp gen}, note that, to apply \Tr{Davies}, the set $\cc$ has to be countable.
    For this reason, we cannot require $\bigcup \cc$ to be the whole $G$ (which in general can be uncountable).
    However, since $G$ is quasi-isometric to a countable graph $H$, we can ensure that every point in $G$ is at distance at most some $R\in \R_{\geq 0}$ (depending on $M,A$ only) from $\bigcup \cc$, by adding singletons to $\cc$ until this is satisfied.
    As a result, the definition of $g: G' \to V(S)$  has to be extended as follows: for every point $x' \in G'$, corresponding to some $x \in G$, pick a $C_i$  such that $d_G(x, C_i)\leq R$, 
    and let $g(x') = C_i'$.

    To check that $g$ satisfies \ref{quasiisom:1}, the proof proceeds similarly to \Tr{cm to qp gen}.
    The first bound follows by adapting the existing argument with \eqref{rem geod metr}. 
    We deduce that, for any $x,y \in G'$, we have
    $$d_{S}(g(x),g(y)) \leq (M(2R+1)+A+1) \cdot \lceil d_{G'} (x,y) \rceil \leq (M(2R+1)+A+1) \cdot (d_{G'}(x,y)+1).$$
    The second bound for all points in $\cc$ follows with the same proof as in \Tr{cm to qp gen}.
    Hence for every $x,y, \in G'$, we have 
    $$d_{G'}(x,y) \leq (2 M^2 + 6AM) \cdot d_{S}(g(x), g(y))+2R.$$ 

    For the quasi-surjectivity property \ref{quasiisom:2}, a similar argument implies the existence of some $C_i$ such that
    $d_S(e, C'_i) \leq MR + 2A+1$.
\end{proof}



Next, we prove our approximate converse of  \Tr{cm to qp gen}:

\begin{proof}[Proof of \Tr{cig to cm}]
    Let $\{S_i\}_{i\in \ci}$ be a cover  witnessing that $S$ is a \cig\ of \G. Construct a graph $G'$ from \g as follows. For each $v\in V(G)$, we replace $v$ by a clique $K^v$ on $n_v$ vertices, where \defi{$n_v$} is the \defi{ply} of $v$ in $\{S_i\}$, i.e.\ the number of $S_i$ containing $v$. We call the edges within each $K^v$ the \defi{vertical edges} of $G'$. We label the vertices of $K^v$ as $v_{S_i}, i\in \ci_v$, where $\ci_v$ is the set of indices $i\in \ci$ with $v\in S_i$. Finally, for each $i\in \ci$, and any edge $vw\in E(S_i)$, we add a $v_{S_i}$-$w_{S_i}$~edge to $G'$, which we call a \defi{horizontal edge}. Note that since each $vw\in E(G)$ lies in some $S_i$, we have
    \labtequ{KvKw}{$d_{G'}(K^v,K^w)=1$ \iff\ $vw\in E(G)$.}

    This completes the construction of $G'$. We claim that it has the desired properties. Indeed, let $\pi: V(G') \to V(G)$ be the canonical projection, i.e.\ $\pi$ maps each $K^v$ to $v$. 
    Easily, we have $d_{G'}(x,y) \geq d_G(\pi(x),\pi(y))$ \fe\ $x,y\in V(G')$, and by \eqref{KvKw} we have  
    $d_{G'}(x,y) \leq 2d_G(\pi(x),\pi(y))+1$; indeed each \pth{\pi(x)}{\pi(y)}\ $P$ in $G$ can be transformed into an \pth{x}{y}\ in $G'$  using at most one edge within each $K^v$ visited by $P$. Thus $\pi$ is a $(2,1)$-quasi-isometry. 

\medskip
    It remains to check that $S$ is isomorphic to a contraction minor of $G'$. To see this, define \defi{$S'_i$} \fe\ $i\in \ci$ by replacing each vertex $v\in V(S_i)$ by $v_{S_i}$, and each edge $vw\in E(S_i)$ by $v_{S_i} w_{S_i}$. Note that $S'_i$ is connected since $S_i$ is.
    We claim that the graph $S':= G'/\{S'_i\}$ obtained by contracting each $S'_i, i\in \ci$ to a point is isomorphic to $S$. Indeed, the $S'_i$ are pairwise disjoint by construction, and they cover $V(G')$. Thus the map $i\mapsto S'_i$ is a bijection from $V(S)$ to $V(S')$. To see that this bijection yields an isomorphism from $S$ to $S'$, note that each horizontal edge of $G'$ lies in exactly one $S'_i$, and there is a (vertical) $S'_i$-$S'_j$ edge \iff\ $S'_i,S'_j$ intersect a common clique $S^v$, which happens \iff\ $S_i\cap S_j \neq \emptyset$ and therefore  $ij\in E(S)$.
\end{proof}

In case the sets of the cover in \Tr{cig to cm} have uniformly bounded diameters, we easily obtain the following strengthening:

\begin{corollary} \label{bounddiamsets}
    Let $S$ be a \cig\ of a graph $G$, {whereby} the cover of $G$ consists of sets with diameter at most $D$. Then $S$ is $(2D+2, 1)$-quasi-isometric to $G$.
\end{corollary}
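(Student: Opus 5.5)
Let $\{S_i\}_{i\in \ci}$ be the cover witnessing that $S$ is a \cig\ of $G$, each $S_i$ connected of diameter at most $D$ (intrinsic or in $d_G$; the argument only uses $d_G$-diameter, which is at most the intrinsic one). Rather than routing through $G'$ and \Tr{cig to cm}, I would directly define a map $\psi: V(S)\to V(G)$ by sending each $i\in\ci$ to an arbitrarily chosen vertex $v_i\in V(S_i)$. I will then check \ref{quasiisom:1} and \ref{quasiisom:2} for $\psi$.

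\emph{Quasi-surjectivity.} Every $v\in V(G)$ lies in some $S_i$ because the $S_i$ cover $G$. Hence $d_G(v,v_i)\leq D$, so \ref{quasiisom:2} holds with additive constant $D$. If $D\geq 1$, then $D\leq 2D+2$; the case $D=0$ needs separate care, and I return to it below.

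\emph{Upper bound.} If $ij\in E(S)$, pick $x\in S_i\cap S_j$. Then $d_G(v_i,v_j)\leq d_G(v_i,x)+d_G(x,v_j)\leq 2D$. Summing along a geodesic of $S$ gives $d_G(\psi(i),\psi(j))\leq 2D\, d_S(i,j)$.

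\emph{Lower bound.} Take a geodesic $v_i=x_0x_1\dots x_n=v_j$ in $G$. Each edge $x_{k-1}x_k$ lies in some $S_{i_k}$ because the $S_i$ cover $E(G)$. Consecutive sets $S_{i_k},S_{i_{k+1}}$ share the vertex $x_k$, so they are equal or adjacent in $S$. Also $S_i$ contains $x_0$ and so meets $S_{i_1}$, and similarly $S_j$ meets $S_{i_n}$. This gives a walk $i,i_1,\dots,i_n,j$ in $S$, hence $d_S(i,j)\leq n+1=d_G(\psi(i),\psi(j))+1$. Rearranged, $d_G(\psi(i),\psi(j))\geq d_S(i,j)-1\geq \frac{1}{2D+2}d_S(i,j)-1$. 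When $i=j$ the bound is trivial. When $n=0$, the sets $S_i$ and $S_j$ share $v_i$, so $d_S(i,j)\le 1$ and the bound still holds.

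Thus $\psi$ satisfies the upper bound with constant $2D\leq 2D+2$ and additive term $1$, and the lower bound with multiplicative constant $1\leq 2D+2$. So $\psi$ is a $(2D+2,1)$-quasi-isometry, provided the quasi-surjectivity constant $D$ can be absorbed into the required additive constant $1$.

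\emph{Main obstacle: matching the constants.} That absorption fails for $D\geq 2$. To repair it, I would try the map in the other direction, $\phi: V(G)\to V(S)$ sending $v$ to any $i$ with $v\in S_i$. This map is automatically surjective up to distance $1$: each $i$ has some vertex $v\in S_i$, and $\phi(v)$ contains $v$, so it is equal or adjacent to $i$. For adjacent $u,v$ in $G$, the sets $\phi(u)$ and $\phi(v)$ are within distance $2$ in $S$, via the set containing the edge $uv$. Conversely, the computation above gives $d_G(u,v)\leq 2D\,d_S(\phi(u),\phi(v))+2D$, with the additive term coming from $u\in\phi(u)$ and $v\in\phi(v)$.

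So $\phi$ is controlled multiplicatively by $2D+2$ and surjective up to $1$. The additive error in the lower bound would then have to be absorbed using $d(u,v)\geq 1$ for distinct vertices, at the cost of enlarging the multiplicative constant. This constant bookkeeping is the only delicate step; qualitatively, $S$ is clearly quasi-isometric to $G$ with constants depending only on $D$.
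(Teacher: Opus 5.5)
Your proof stops one line short of the stated constants, and the repair you sketch for that last line would not work. The missing line is just a correct reading of \ref{quasiisom:1}.

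With $M=2D+2$ and $A=1$, the lower inequality in \ref{quasiisom:1} for your map $\phi\colon V(G)\to V(S)$ says $\frac{1}{2D+2}\,d_G(u,v)-1\le d_S(\phi(u),\phi(v))$. Equivalently, $d_G(u,v)\le (2D+2)\,d_S(\phi(u),\phi(v))+(2D+2)$. So the permitted additive error on this side is $MA=2D+2$, not $1$. The bound $d_G(u,v)\le 2D\,d_S(\phi(u),\phi(v))+2D$ that you already derived implies it. You also have $d_S(\phi(u),\phi(v))\le 2\,d_G(u,v)$, and every $i\in\ci$ is within distance $1$ of $\phi(V(G))$. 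Hence $\phi$ is a $(2D+2,1)$-quasi-isometry from $G$ to $S$, with nothing to absorb.

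The absorption you propose could not succeed anyway. Two distinct vertices $u,v$ of the same $S_i$ may satisfy $\phi(u)=\phi(v)$. Then $d_S(\phi(u),\phi(v))=0$ while $d_G(u,v)$ can be as large as $D$, and no appeal to $d(u,v)\ge 1$ removes an additive term there.

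In fact you can do better. Chain through points $y_k\in S_{i_{k-1}}\cap S_{i_k}$ along an $S$-geodesic from $\phi(u)$ to $\phi(v)$; this gives $d_G(u,v)\le D\,(d_S(\phi(u),\phi(v))+1)$. Your walk argument gives $d_S(\phi(u),\phi(v))\le d_G(u,v)+1$. Together these show $\phi$ is even a $(\max\{D,1\},1)$-quasi-isometry.

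With that line added, your argument is correct and takes a different route from the paper. The paper reuses the construction from the proof of \Tr{cig to cm}. There, the clique blow-up $G'$ is $(2,1)$-quasi-isometric to $G$ via the projection $\pi$, and $S$ is isomorphic to the contraction minor $G'/\{S'_i\}$ whose contracted pieces have diameter at most $D$. The paper then composes the two quasi-isometries. This presents the corollary as an immediate consequence of the contraction-minor picture, but the constants are only sketched there. Read literally, a $(D+1,0)$-quasi-isometry between $S'$ and $G'$ would have to be a bijection.

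Your direct map avoids the auxiliary graph, checks the constants transparently, and makes the direction of the quasi-isometry explicit. Your diagnosis that no map $V(S)\to V(G)$ can have additive constant $1$ in general is also right. For example, take $C_5$ covered by itself ($D=2$): then $S$ is a single vertex, but $C_5$ has radius $2$.
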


\begin{proof}
    We follow the lines of the proof of \Tr{cig to cm}, noting that if the sets $S_i$ have diameter at most $D$, then so do the $S_i'$. Easily, the resulting contraction minor $S'$ is $(D+1,0)$-quasi-isometric to $G'$. Since $G'$ is $(2,1)$-quasi-isometric to $G$, we deduce that $S=S'$ is $(2D+2, 1)$-quasi-isometric to $G$.
\end{proof}

\section{Tree decompositions preserving quasi-isometries} \label{sec TD QP}

In this section we prove \Tr{tree decomposition}.
We start with a simple lemma.


\begin{lemma} \label{lem:TD path}
    Let $(T, (V_t : t \in V(T)))$ be a tree-decomposition of a graph~$G$. Let $k > 1$, $t \in V(T)$, and $P=(x_0 \dots x_k)$ be a path in $G$, with endpoints $x_0,x_k \in V_t$ and $x_i \not \in V_t$ for every $1 \leq i \leq k-1$. Then $x_0, x_k$ belong to some adhesion set $V_{t} \cap V_{t'}$, where $tt' \in E(T)$.
\end{lemma}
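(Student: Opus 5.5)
The plan is to use the standard separation property of tree-decompositions: removing an edge $tt'$ of $T$ splits $T$ into two subtrees, and the corresponding adhesion set $V_t\cap V_{t'}$ separates the vertices living only on one side from those living only on the other. Since the interior vertices $x_1,\dots,x_{k-1}$ avoid $V_t$ and $k>1$, the interior is nonempty and contained in $G-V_t$. The set $\{x_1,\dots,x_{k-1}\}$ induces a connected subpath, so the union of the subtrees $T_{x_i}:=\{s\in V(T): x_i\in V_s\}$ is connected: each $T_{x_i}$ is a subtree by \ref{TDiii}, and consecutive ones meet by \ref{TDii} applied to $x_ix_{i+1}$. This union avoids $t$, so it lies in a single component $T'$ of $T-t$. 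Let $t'$ be the unique neighbour of $t$ in $T'$.

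Next I will show $x_0\in V_{t'}$. By \ref{TDii} applied to the edge $x_0x_1$, some bag $V_s$ contains both, and $s\in T_{x_1}\subseteq T'$. Since also $x_0\in V_t$, and $t'$ lies on the $t$--$s$ path in $T$ (every path from $t$ into $T'$ passes through $t'$; if $s=t'$ this is trivial), \ref{TDiii} gives $x_0\in V_t\cap V_s\subseteq V_{t'}$. The same argument with the edge $x_{k-1}x_k$ gives $x_k\in V_{t'}$. Hence $x_0,x_k\in V_t\cap V_{t'}$, as required.

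The only step requiring care is the connectivity of $\bigcup_i T_{x_i}$, which ensures that \emph{both} endpoints reach into the same component $T'$ and thus the same neighbour $t'$. This is routine from \ref{TDii} and \ref{TDiii}, so I expect no genuine obstacle. The hypothesis $k>1$ is used only to guarantee that the interior of $P$ is nonempty.
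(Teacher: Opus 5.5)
Your proof is correct and follows essentially the same route as the paper. Both arguments find the single component $T'$ of $T-t$ containing every bag that meets the interior of $P$, and then apply coherence along the path from $t$ through its neighbour $t'$ to the bags holding $x_0x_1$ and $x_{k-1}x_k$. The only difference is in how $T'$ is obtained: the paper cites the standard fact that $G-V_t$ splits into disjoint, non-adjacent parts indexed by the components of $T-t$, whereas you derive the needed connectivity directly from \ref{TDii} and \ref{TDiii} via the union of the subtrees $T_{x_i}$, which makes your argument self-contained.
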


\begin{proof}
By standard properties of tree-decompositions (see e.g.\ \cite[Theorem 10.13]{AlgDes}),  for any node $t \in T$, the graph $G - V_t$ consists of pairwise disjoint and non-adjacent subgraphs, each one corresponding to a connected component of $T - t$.
In particular, there is exactly one connected component of $T - t$, say $T'$, such that $G[ \bigcup_{i \in V(T')}V_{i}] - V_t$ intersects the connected set $P - V_t$.
Since $x_0x_1$ and $x_{k-1}x_k$ are edges, there exist $t_0, t_k \in V(T)$ such that $x_0,x_1 \in V_{t_0}$ and $x_{k-1},x_k \in V_{t_k}$.
As $x_1, x_{k-1} \in P - V_t$, we deduce that $t_0, t_k \in V(T')$.
Let $t'$ be the unique neighbour of $t$ in $T'$, and note that the $t$-$t_0$ path in $T$ passes through $t'$.
Since $x_0 \in V_t \cap V_{t_0}$, by the coherence property of tree-decompositions, we have that $x_0 \in V_t \cap V_{t'}$, which is an adhesion set.
Proceeding in the same way for $x_k$, our claim follows.
\end{proof}

The following lemma improves the metric properties of our tree-decompositions by enlarging their bags.
Given $X\subset V(G)$ and $r\in \R_+$, let ${X^{+r}}:= B_G(X,r)$.

\begin{lemma} \label{lem:TD exp}
    Let $(T, (V_t : t \in V(T)))$ be a tree-decomposition of a graph~$G$, and $r\in \R_+$. Then $(T, (V_t^{+r} : t \in V(T)))$ is also a tree-decomposition of~$G$, and its adhesions satisfy 
    \begin{equation} \label{eq: ad sets}
        (V_i \cap V_j)^{+r} = V_i^{+r} \cap V_j^{+r}.
    \end{equation}
    Moreover, if for each adhesion set $A = V_i \cap V_j$ we have $diam_G(A) \leq r$, then
    \begin{enumerate}
        \item \label{ltd i} for every adhesion set $A'=V_i^{+r} \cap V_j ^{+r}$ we have $diam_{G[V_i^{+r}]}(A')\leq 3r$; 
        \item \label{ltd ii} the identity $\Id:G[V_t^{+r}] \to G$ has additive distortion at most $r':=4r$ \fe\ $t\in V(T)$.\footnote{A very similar lemma appears in \cite{AlbGeoBlo}.}
    \end{enumerate}
\end{lemma}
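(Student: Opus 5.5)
The plan is to derive everything from one separation fact about tree-decompositions. If $t_2$ lies on the $t_1$--$t_3$ path in $T$, then every path in $G$ from a vertex of $V_{t_1}$ to a vertex of $V_{t_3}$ meets $V_{t_2}$. In particular, for an edge $ij$ of $T$, every path from $V_i$ to $V_j$ meets $V_i\cap V_j$. This is the standard fact already used in the proof of \Lr{lem:TD path}, namely that $G-V_t$ splits according to the components of $T-t$. All the bounds below then come from building short paths out of geodesics and keeping track of which vertices stay within distance $r$ of an original bag.

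\textbf{Tree-decomposition and \eqref{eq: ad sets}.} Properties (1) and (2) of a tree-decomposition are inherited, since $V_t\subseteq V_t^{+r}$. For coherence, let $x\in V_{t_1}^{+r}\cap V_{t_3}^{+r}$. Pick $y_1\in V_{t_1}$ and $y_3\in V_{t_3}$ with $d_G(x,y_1),d_G(x,y_3)\le r$, and concatenate a $y_1$--$x$ geodesic with an $x$--$y_3$ geodesic. By the separation fact this walk meets $V_{t_2}$ in some vertex $z$. Since $z$ lies on one of the two geodesics, $d_G(x,z)\le r$, so $x\in V_{t_2}^{+r}$.

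For \eqref{eq: ad sets}, the inclusion $\subseteq$ is immediate. For $\supseteq$, run the same argument with $t_1=i$, $t_3=j$ adjacent in $T$, using that $V_i\cap V_j$ separates $V_i$ from $V_j$. This gives a vertex $z\in V_i\cap V_j$ with $d_G(x,z)\le r$.

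\textbf{Item \ref{ltd i}.} Let $x,y\in A'=(V_i\cap V_j)^{+r}$, with $a,b\in A$ satisfying $d_G(x,a),d_G(y,b)\le r$. By hypothesis $d_G(a,b)\le r$. Concatenate three $G$-geodesics: $x$ to $a$, $a$ to $b$, and $b$ to $y$. Every vertex on the $x$--$a$ geodesic is within $r$ of $a\in V_i$, and similarly for $b$--$y$. Every vertex on the $a$--$b$ geodesic is within $r$ of $a$. So the whole path lies in $G[V_i^{+r}]$ and has length at most $3r$.

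\textbf{Item \ref{ltd ii}.} This is the main step. We must show $d_{G[V_t^{+r}]}(u,v)\le d_G(u,v)+4r$ for $u,v\in V_t^{+r}$; the reverse inequality is trivial.
\begin{enumerate}
\item Choose $u',v'\in V_t$ within distance $r$ of $u,v$ respectively, and let $P'$ be a $G$-geodesic from $u'$ to $v'$.
\item Split $P'$ into maximal subpaths whose interior avoids $V_t$. By \Lr{lem:TD path}, each such excursion of length $k>1$ has endpoints $a,b$ in a common adhesion set, so $d_G(a,b)\le r$. Since $P'$ is a geodesic, the excursion itself has length $d_G(a,b)\le r$. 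Hence all its vertices are within $r$ of $a\in V_t$, and therefore all of $P'$ lies in $V_t^{+r}$.
\item Join the $u$--$u'$ geodesic, then $P'$, then the $v'$--$v$ geodesic. The two end geodesics also lie in $V_t^{+r}$.
\item The resulting path has length at most $r+(d_G(u,v)+2r)+r = d_G(u,v)+4r$.
\end{enumerate}

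The delicate point is step 2: it is the geodesicity of $P'$ that converts the diameter bound on adhesion sets into a bound on how far excursions can stray from $V_t$.
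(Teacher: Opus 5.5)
Your proof is correct and follows essentially the same route as the paper. You use the separation property of adhesion sets for \eqref{eq: ad sets}, concatenated geodesics for the $3r$ bound, and split a $V_t$-to-$V_t$ geodesic into excursions controlled by Lemma~\ref{lem:TD path} for the $4r$ distortion; the only difference is that you check the coherence axiom for the enlarged bags directly with the same separation argument, whereas the paper cites this from the literature.
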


\begin{proof}
    It is proved in \cite[Lemma 3.3]{StrDualPath} that $(T, (V_t^{+r} : t \in V(T)))$ is a tree-decomposition of~$G$.
    To prove \eqref{eq: ad sets}, the inclusion $(V_i \cap V_j)^{+r} \subseteq V_i^{+r} \cap V_j^{+r}$ obviously holds for any vertex-sets $V_i,V_j$ of a graph. 
    For the converse we use the properties of a tree decomposition as follows.
    Pick $x \in V_i^{+r} \cap V_j^{+r}$, and paths $P_{x,V_i}, P_{x,V_j}$ from $x$ to some elements $x_i \in V_i$ and $x_j \in V_j$ of length at most $r$.
    Any $x_i$-$x_j$-path contained in $P_{x,V_i} \cup P_{x,V_j}$ will intersect $V_i \cap V_j$, as the latter separates $V_i$ from $V_j$ \cite[Lemma 12.3.1]{DiestelBook25}. 
    Let $y$ be a point in this intersection. Then $d_G(x,y) \leq r$ and so $x \in (V_i \cap V_j)^{+r}$.

\smallskip
    To prove \ref{ltd i},
    consider $x,y \in V_i^{+r} \cap V_j^{+r}$.
    By \eqref{eq: ad sets}, we have $x,y \in (V_i \cap V_j)^{+r}$. Thus there are two paths from $x,y$ to some $x',y' \in V_i \cap V_j$ of length at most $r$.
    But $d_G(x',y') \leq r$ by our assumption, so $x,y$ are connected by a path in $(V_i \cap V_j)^{+r}$ of length at most $r$.
    Thus there is an $x$-$y$-path contained in $(V_i \cap V_j)^{+r}$, hence, by \eqref{eq: ad sets}, in both $ V_i^{+r},  V_j^{+r}$, of length at most $3r$.

\smallskip
    For \ref{ltd ii}, clearly $d_G \leq d_{G[V_t^{+r}]}$.
    On the other side, consider two points $x,y \in V_t^{+r}$ and let $x',y' \in V_t$ such that $d_G(x,x') = d_{G[V_t^{+r}]} (x,x') \leq r$ and $d_G(y,y') = d_{G[V_t^{+r}]} (y,y') \leq r$.
    Let $P$ be a geodesic $x'$-$y'$-path in $G$.
    We claim that $P$ is contained in $V_t^{+r}$, from which we can deduce that $d_G(x',y') = d_{G[V_t^{+r}]}(x',y')$ and conclude that
    $$d_{G[V_t^{+r}]}(x,y) \leq d_{G[V_t^{+r}]}(x',y') + 2r = d_{G}(x',y') + 2r \leq d_G(x,y) + 4r.$$
    To prove the claim, partition $P$ in sub-paths $P_1, \dots, P_K$ where each $P_k$ has endpoints $x_k, y_k$ in $V_t$ and inner vertices (if existing) not in $V_t$. If such inner vertices do not exist, then $P_k$ is contained in $V_t$. Otherwise, by \Lr{lem:TD path}, the endpoints $x_k, y_k$ must belong to some adhesion set, hence $d_G(x_k, y_k) \leq r$ and so $P_k$ is contained in $V_t^{+r}$.
\end{proof}

We now prove \Tr{tree decomposition} in the following more general form as described in the introduction:

\begin{theorem} \label{tree decomposition gen}
    Let \cx\ be a class of graphs which is tree-closed and coarsely contraction closed. Let $G$ be a graph admitting a tree decomposition $(T,\{V_i\}_{i \in V(T)})$ \st\ for some {$M \in \R_{\geq 1}$ and $A,r \in \R_{\geq 0}$},
    \begin{enumerate}
    \item each $G[V_i]$ is connected and $(M,A)$-quasi-isometric to a graph in \cx\ \wrt\ $d_G$, and
    \item each adhesion set $G[V_i] \cap G[V_j]$ has  diameter at most $r$ in $d_G$.
    \end{enumerate}
    Then $G$ is $(M',A')$-quasi-isometric to a graph in \cx, with $M',A'$ depending on $M,A,r$ and \cx.
\end{theorem}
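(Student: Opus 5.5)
First I enlarge the bags with \Lr{lem:TD exp}, taking $r$ as the diameter bound of the adhesions. The resulting tree-decomposition $(T,(V_t^{+r}))$ has adhesions of intrinsic diameter at most $3r$. Moreover, each identity $G[V_t^{+r}]\to G$ has additive distortion at most $4r$. Since $V_t$ is $r$-dense in $V_t^{+r}$, each enlarged induced bag $G[V_t^{+r}]$ is quasi-isometric, in its own intrinsic metric, to some $X_t\in\cx$, with constants depending only on $M,A,r$. So from now on I assume the bags are quasi-isometric to graphs in \cx\ intrinsically, and the adhesions have bounded intrinsic diameter in the adjacent bags.

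Next I build a graph $G^*\in\cx$ by gluing the $X_t$ along a tree with adhesion $1$. For each $t$ fix a quasi-isometry $f_t:G[V_t^{+r}]\to X_t$. For each edge $tt'\in E(T)$, the image of the adhesion $A_{tt'}$ has bounded diameter in $X_t$. I pick a vertex $a_{tt'}\in f_t(A_{tt'})$ and a vertex $a'_{tt'}\in f_{t'}(A_{tt'})$, and glue $X_t$ and $X_{t'}$ by identifying $a_{tt'}$ with $a'_{tt'}$. Doing this along all edges of $T$, with each $X_t$ taken disjointly, gives a graph $G^*$. It has a tree-decomposition of adhesion $1$ whose induced bags are the $X_t$. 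There is one subtlety: several edges at $t$ may choose the same vertex $a_{tt'}$, which identifies many points. This is still a tree-like gluing along cut-vertices, so by tree-closedness $G^*\in\cx$. (If needed, I can first add pendant edges to make the glue points distinct; but adding pendant edges is itself a tree-gluing with $K_2$, which needs $K_2\in\cx$. I avoid this by gluing directly.)

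It remains to compare $G$ with $G^*$. I expect this step to be the main obstacle, and I want to use coarse contraction closedness here. I would not attempt a direct quasi-isometry $G\to G^*$. The problem is that in $G$ different adhesions at $t$ can overlap or be close, while in $G^*$ the corresponding gluing points are identified, which is fine only up to bounded error. Instead, I pass through a cover-intersection graph. Consider the graph $\hat G$ obtained from $G$ by taking the disjoint union of the enlarged bags $G[V_t^{+r}]$ and joining copies of the same vertex in adjacent bags by an edge. The copies of a vertex $v$ form a subtree of $T$, so contracting all these connecting edges recovers $G$ exactly, i.e.\ $G$ is a contraction minor of $\hat G$.

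I then claim that $\hat G$ is quasi-isometric to $G^*$ with controlled constants. Take a $\hat G$-geodesic. It crosses from bag to bag only along vertical edges inside adhesions. Up to a bounded error per crossing, the same route can be followed in $G^*$ through the glue points, and the crossing costs in the two graphs are comparable. The remaining danger is that $\hat G$ might shortcut through a long sequence of bags, but both graphs follow the same tree path in $T$, so the number of crossings agrees. Since the error per crossing is bounded and each crossing has length at least $1$ in $\hat G$, the additive errors become multiplicative, and the map $\bigsqcup f_t$ is a quasi-isometry $\hat G\to G^*$. Therefore $\hat G\in\QI(\cx)$. Because \cx\ is coarsely contraction closed, $\QI(\cx)$ is closed under contraction minors, so $G\in\QI(\cx)$. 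The constants come from \Tr{cm to qp gen} together with the explicit dependence of the coarse-contraction-closedness constants on \cx.
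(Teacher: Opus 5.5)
Your proof has a genuine gap: the claim that $\bigsqcup_t f_t\colon \hat G\to G^*$ is a quasi-isometry is false. It fails precisely in the situation you flagged and then dismissed, namely coinciding glue points.

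Your crossing count only gives $d_{G^*}\lesssim d_{\hat G}$, because every crossing costs at least one vertical edge in $\hat G$. The reverse inequality needs every crossing to cost at least $1$ in $G^*$ as well. But when two glue points of a piece $X_s$ coincide, a $G^*$-geodesic traverses $X_s$ at zero cost. It can therefore pass through arbitrarily many pieces for free, while $\hat G$ pays one vertical edge per adhesion.

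This is forced, not exceptional. Take $G=K_{1,n}$ with centre $c$, let $T=t_1t_2\dots t_n$ be a path, and set $V_{t_j}=\{c,j\}$, so $r=0$. Every adhesion is $\{c\}$, so for any choice of the $f_t$ all glue points of $X_t$ equal $f_t(c)$. Hence $G^*$ has bounded diameter (it is $K_{1,n}$ if $X_{t_j}=K_2$). On the other hand, $\hat G$ is a comb whose spine $(c,t_1)(c,t_2)\dots(c,t_n)$ gives it diameter $n+1$. In general, blowing $G$ up along $T$ stretches every vertex that lies in many bags into a long path. So $\hat G$ need not be quasi-isometric to $G^*$ (nor to $G$), and the premise $\hat G\in\QI(\cx)$ that you feed into contraction-closedness is not established.

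The pendant-edge idea you rejected is exactly what repairs your route. Suppose $K_2\in\cx$; then by tree-closedness $\cx$ is closed under attaching leaves. Attach a private pendant edge to $X_t$ at each glue point, and glue adjacent pieces at these new leaves. Now a $G^*$-geodesic pays at least $2$ for every intermediate piece it traverses. This absorbs the bounded per-crossing error of your lift, your argument goes through, and you get a valid proof via contraction minors instead of cover-intersection graphs.

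Some assumption of this kind is unavoidable. The edgeless graphs form a tree-closed, coarsely contraction closed class. Yet the path $0,1,\dots,2n$ with bags $\{2j,2j+1,2j+2\}$ satisfies the hypotheses with $M=1$, $A=2$, $r=0$, since each bag is $(1,2)$-quasi-isometric to $K_1$. It is not uniformly quasi-isometric to an edgeless graph.

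The paper never blows $G$ up. It glues the $C_i$ at $f_i(v_{ij})=f_j(v_{ij})$ for a \emph{common} vertex $v_{ij}\in V_i\cap V_j$. It then joins all images of each $v$ by a tree $T'_v\subseteq C$ through the junction points, and sends $v$ to the vertex $T'_v$ of the cover-intersection graph of $C$ with respect to $\{T'_v\}_v\cup E(C)$. A vertex lying in many bags thus stays a single vertex, and $\CIG(\cx)\subseteq\QI(\cx)$ concludes. The paper's lower bound also tacitly needs every junction point to have a single preimage common to all bags containing it. Injective local quasi-isometries provide this, which again means attaching leaves.
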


\begin{proof}

    We may assume \obda\ that \g is connected, and each bag $V_i$ is non-empty. 
    By \Lr{lem:TD exp}, we can further assume that the adhesion sets $V_i \cap V_j$ have bounded diameter in the intrinsic metrics of $G[V_i]$ and $G[V_j]$ and each induced bag $G[V_i]$ is $(M,A)$-quasi-isometric to a graph in \cx\ \wrt\ its intrinsic metric.

    For each $i \in V(T)$, let $f_i: V_i \rightarrow C_i$ be a $(M,A)$-quasi-isometry where $C_i \in \cx$. We say that $f_i$ is a \defi{local quasi-isometry}.
    We glue the graphs $\{C_i\}_{i \in V(T)}$ together as follows: for every edge $ij \in E(T)$, choose an arbitrary $v_{ij}=v_{ji}$ in the non-empty set $V_i \cap V_j$ and identify the \defi{junction points} $f_i(v_{ij}) \in C_i$ and $f_j(v_{ij}) \in C_j$. Let $C$ be the resulting graph, which lies in \cx\ since the latter is tree-closed.

    We aim to construct a global quasi-isometry from \g to a  graph in \cx. A first attempt could be to map $G$ to $C$ by sending each vertex to one of its images under the local quasi-isometries. However, this approach may fail when vertices have images in $C$ that are arbitrarily far apart.  
    Therefore, we instead construct a quasi-isometry from \g to a \cig\ of $C$, where, for each vertex of $G$, the cover groups together all its images under the local quasi-isometries, and follow the lines of the proof of \Tr{cm to qp gen}.
    
    For every $v \in V(G)$, let $T_v$ denote the connected sub-tree of $T$ whose vertices index the bags containing $v$. We construct a tree $T_v'$ in $C$ by joining with geodesic paths the vertices in
    $\{f_i(v)\}_{i \in V(T_v)} \cup \{f_i(v_{ij})=f_j(v_{ij})\}_{ij \in E(T_v)}$.
    More precisely, every $T_v'$ is the union of edge-disjoint trees $T_{v,i}'$ contained in $C_i$, for each bag $i \in V(T_v)$, constructed as follows. Each $T_{v,i}'$ is a nested union of trees $\{T_{v,i,j}'\}_{j \geq 0}$ defined inductively as follows. 
    We start from the one-vertex tree $T_{v,i,0}' = f_i(v)$ and fix an enumeration $\{y_j\}_{j \geq 1}$ of the neighbours of $i$ in $T_v$. At step $j \geq 1$, we construct $T_{v,i,j}$ by picking a geodesic $Q$ from the junction point $f_i(v_{iy_j})$ to $f_i(v)$, and extending $T_{v,i,j-1}$ by adding the subpath of $Q$ from $f_i(v_{iy_j})$ up to its first vertex in $T_{v,i,j-1}$.

    We will need the following consequence of this construction: for every $x\in V(T_v')$, there is a $C_j$ such that $x \in C_j$ and
    \begin{equation} \label{trees}
        d_{C_{j}}(f_{j}(v), x) \leq Mr+A.      
    \end{equation}
    Indeed, if $T_v$ has no edges, then $x=f_j(v)$, where $j$ is the only vertex of $T_v$. Otherwise, $x$ lies on a geodesic between $f_j(v)$ and $f_j(v_{ij})$ for some $ij \in E(T_v)$. Thus $v, v_{ij} \in V(G)$ belong to $V_i \cap V_j$ and so $d_{G[V_j]}(v,v_{ij}) \leq r$. Using the local quasi-isometry $f_j$, we deduce $d_{C_j}(f_j(v),f_j(v_{ij})) \leq Mr + A$ and, since $x$ lies on a $f_j(v)$--$f_j(v_{ij})$~geodesic, we obtain $d_{C_j}(f_i(v),x) \leq Mr + A$.

\medskip
    Let $C'$ denote the \cig\ of $C$ with respect to the cover $\cc = \{T_v'\}_{v \in V(G)} \cup E(C)$. Then $C'\in \CIG(\cx)$, and hence $C'\in \QI(\cx)$ since \cx\ is assumed to be coarsely contraction closed.
    We now claim that the map $f: G \rightarrow C'$ that maps any vertex $v$ to the corresponding $T_v'$ is a quasi-isometry, from which we deduce that $G\in \QI(\cx)$ as desired. The proof is similar to that of \Tr{cm to qp gen}.

    We first check the quasi-embedding property \ref{quasiisom:1}. On one side, let $x, y \in V(G)$ and consider an $x\text{-}y$~geodesic $P=(x=)x_0x_1...x_n(=y)$ in $G$. Recall that every edge $x_ix_{i+1}$ is contained in some $V_{j}$.
    Note that $d_{C'}(T_{x_i}', T_{x_{i+1}}') \leq d_{C}(T_{x_i}', T_{x_{i+1}}')+1$ because we included all the edges of $C$ in the cover \cc. Moreover, the right-hand-side is upper bounded by $d_{C}(f_{j}(x_i),f_{j}(x_{i+1}))+1$. Since $f_j$ is a (local) quasi-isometry, we can bound the latter by $Md_{G[V_{j}]}(x_i, x_{i+1})+A+1 = M+A+1$.
    We conclude that $d_{C'}(T_{x}', T_{y}') \leq (M+A+1) ||P|| = (M+A+1) d_G(x,y)$.

    On the other side, pick a geodesic $P'=(T_x'=)v_0v_1...v_n(=T_y')$ in $C'$. For every edge $v_iv_{i+1}$, pick some $x_i \in C$ belonging to the non-empty intersection $v_i \cap v_{i+1}$. We choose a suitable ``(quasi-)pre-image'' $\kreis{x}_i \in G$ for every $x_i$ as follows: if $x_i \in C$ is a junction point, let $Y = \{ j \in V(T) \mid x_i \in C_j\}$. By construction, there is some $\kreis{x}_i \in  \bigcap_{j \in Y} V_{j}$ such that $x_i = f_{j}(\kreis{x}_i)$ for every $j \in Y$. If $x_i$ is not a junction point, choose a $\kreis{x}_i$ such that its image under the local quasi-isometry is close to $x_i$. Thus, whenever $x_i$ belongs to some $C_{j}$, we have $\kreis{x}_i \in V_{j}$ and
    \begin{equation} \label{choosekreis}
        d_{C_j}(x_i, f_{j}(\kreis{x}_i)) \leq A.
    \end{equation}
    Note that for each $0\leq i\leq n-1$, the points $x_i, x_{i+1}$ belong to either an edge of $C$ or to some $T_v'$. If the former is the case, say $x_ix_{i+1} \in E(C_{j})$ and hence $\kreis{x_i}, \kreis{x_{i+1}} \in V_{j}$. We have 
    $d_G(\kreis{x_i}, \kreis{x_{i+1}}) \leq d_{G[V_{j}]}(\kreis{x_i}, \kreis{x_{i+1}}) \leq  M(d_{C_{j}}(f_{j}(\kreis{x_i}), f_{j}(\kreis{x_{i+1}})) + A)$. Applying \eqref{choosekreis} twice, we bound the latter by $M(d_{C_{j}}(x_i, x_{i+1})) + 3A) = 3AM + M$.
    If the latter is the case, apply separately \eqref{trees} to $x_i$ and $x_{i+1}$. For $x_i$ we thus find a $C_j$ such that $x_i \in C_j$ with $d_{C_j}(f_j(v), x_i) \leq Mr + A$.
    Combining this with \eqref{choosekreis}, we have $d_{C_j}(f_j(v), f_j(\kreis{x_i})) \leq Mr +2A$.
    Using the local quasi-isometry, we have $d_G(v, \kreis{x_i}) \leq d_{G[V_j]}(v, \kreis{x_i}) \leq M(Mr +2A +A) = M^2r + 3AM$. For the same reason, also $d_G(v, \kreis{x_{i+1}})$ is bounded by the same constant and combining those we have $d_G(\kreis{x_i}, \kreis{x_{i+1}}) \leq  2M^2r + 6AM$.
    Recalling that $x_0 \in T_x'$ and $x_n \in T_y'$, also $d_G(x, \kreis{x_0})$ and $d_G(y, \kreis{x_n})$ are bounded by $M^2r + 3AM$.
    Thus $d_G(x,y) \leq (2AM^2r + 6AM)||P'|| =  (2AM^2r + 6AM) d_{C'}(T_x', T_y')$.

    To check the quasi-surjectivity property \ref{quasiisom:2}, for any edge $e \in E(C) \subset V(C')$ consider one of its endpoints $y$, belonging to some $C_j$, say. Since $f_j$ is quasi-surjective, there is some $x \in V_j$ such that $d_{C}(f_j(x),y) \leq A$. Thus also $d_C(T_x',y)\leq A$ and so $d_{C'}(T_x',e)\leq A+1$. 
\end{proof}

\begin{remark} \label{rem TD}
   As mentioned in the introduction, \Tr{tree decomposition gen} becomes false if we replace $d_G$ by the intrinsic metric $d_{G[V_i]}$. To see this, let \g be obtained from $K_5$ by subdividing each edge into a path of length $N\in \N$. Consider a tree decomposition of \g with just two bags, one of which consists of a single edge $e$, and the other consists of $G -e$. Then both induced bags are planar (in their intrinsic metrics), the single adhesion set has diameter 1, but \g is not $(M,A)$-quasi-planar for any fixed $M,A$ independent of $N$.
\end{remark}

Graph-decompositions are a natural extension of tree-decompositions (see \cite{DJKK} for the precise definition).
By considering the the former instead of the latter, \Tr{tree decomposition gen} can be further generalized. We leave the details to the interested reader.

\section{Refining quasi-isometries}

Recall that a map $\varphi: V(H) \to V(G)$ between graphs (or metric spaces) is \defi{bi-Lipschitz}, if it satisfies \ref{quasiisom:1} with $A=0$. If $\varphi$ is in addition bijective, we call it a  \defi{bi-Lipschitz equivalence}, and say that $G,H$ are {bi-Lipschitz equivalent} in this case.
\smallskip

In this section we study the relationship between quasi-isometries and bi-Lipschitz equivalences. Some of our results will be used in the proof of \Tr{qi implies iter scig} in the next section, while others may be helpful elsewhere.

We start by identifying conditions under which a quasi-isometry can be made injective and/or surjective. For injectivity we have

\begin{proposition}[{\cite[Observation 2.2]{GeoPapMin}}] \label{injective}
    Suppose $f:G\to H$ is a $(M,A)$-quasi-isometry between graphs. Then there is an injective $(M',A')$-quasi-isometry $f'$ from $G$ into a graph obtained by attaching leaves (alternatively,  cliques) to the vertices of $H$.
    In addition, if $f$ is surjective then so is $f'$.
\end{proposition}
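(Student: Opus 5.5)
The idea is to resolve collisions of $f$ by giving each vertex of $G$ its own private vertex near its image. For each $y\in V(H)$ let $F_y:=f^{-1}(y)\subseteq V(G)$. By \ref{quasiisom:1}, any two $g,h\in F_y$ satisfy $M^{-1}d_G(g,h)-A\le 0$, i.e.\ $d_G(g,h)\le MA$. So $F_y$ has diameter at most $MA$. (When $G$ is infinite, $F_y$ may well be infinite, since local finiteness is not assumed; this causes no problem below.)

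Construct $H'$ from $H$ as follows. For every $y\in V(H)$ with $F_y\neq\emptyset$, fix one vertex $g_y\in F_y$. For every other $g\in F_y\setminus\{g_y\}$, attach a new leaf $\ell_g$ adjacent to $y$; in the clique variant, add a clique on $\{y\}\cup\{\ell_g : g\in F_y\setminus\{g_y\}\}$ instead. Define $f'(g_y):=y$ and $f'(g):=\ell_g$ otherwise. Distinct vertices of $G$ then go either to distinct vertices of $H$ or to distinct new vertices, so $f'$ is injective.

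Next we check \ref{quasiisom:1} for $f'$. Every new vertex is at distance exactly $1$ from its base vertex $y$. Adding leaves or cliques pendant at a single vertex does not change distances between old vertices. Hence $d_{H'}(f'(g),f'(h))$ and $d_H(f(g),f(h))$ differ by at most $2$. It follows that $f'$ satisfies \ref{quasiisom:1} with constants $M$ and $A+2$.

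Next we check \ref{quasiisom:2}. Every vertex of $H'$ is either an old vertex, which lies within $A$ of $f(V(G))$, or a new vertex, which lies within $1$ of $y=f(g_y)$. Since $f(G)\subseteq f'(G)\cup\{\text{neighbours}\}$, and indeed each old image $y$ is itself $f'(g_y)$, we get $A':=A+2$ and $M':=M$.

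For the surjectivity addendum, suppose $f$ is surjective. Then every $y\in V(H)$ has $F_y\neq\emptyset$, so $y=f'(g_y)$ lies in the image of $f'$. Every new vertex $\ell_g$ is by definition $f'(g)$. Hence $f'$ is surjective, and in fact a bijection.

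The only step needing any care is confirming that attaching pendant leaves or cliques leaves $d_H$ unchanged on old vertices. This holds because every new vertex is adjacent only to its base $y$ and to other new vertices hanging at the same $y$. Such vertices therefore create no shortcuts between old vertices.
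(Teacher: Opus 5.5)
Your proof is correct and is the standard construction: attach $|f^{-1}(y)|-1$ pendant leaves (or a clique through $y$) at each $y\in V(H)$ and send the surplus preimages of $y$ bijectively onto them. This is what the cited Observation~2.2 of \cite{GeoPapMin} amounts to; the paper itself only quotes that observation and gives no proof. Your argument yields $M'=M$ and $A'=A+2$, and, as you note, a bijection when $f$ is surjective.
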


For surjectivity we have

\begin{proposition} \label{surjective}
    Suppose $f:G\to H$ is a $(M,A)$-quasi isometry between graphs. Then there is a surjective $(M',A')$-quasi-isometry $f'$ from $G$ onto an $A$-shallow contraction minor of $H$.
    In addition, if $f$ is injective then so is $f'$.
\end{proposition}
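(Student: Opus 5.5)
The idea is to contract $H$ onto a subset of the image of $f$, using a Voronoi decomposition. Let $X := f(V(G)) \subseteq V(H)$. By \ref{quasiisom:2}, every vertex of $H$ lies within distance $A$ of $X$. Take the Voronoi cell decomposition of $H$ with respect to the points of $X$, refined to a partition $\{R_x\}_{x\in X}$. As noted in \Sr{sec dist}, each $R_x$ is connected, because it contains a geodesic from each of its vertices to $x$ (the refinement should be made, say by a fixed tie-breaking order, so that this remains true). Each $R_x$ lies in $B_H(x,A)$, so it has radius at most $A$ around $x$. Let $H'$ be obtained from $H$ by contracting each $R_x$ to a vertex. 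Since the contracted connected pieces have bounded radius, $H'$ is an $A$-shallow contraction minor of $H$; the diameter is at most $2A$, so if the definition is read with diameter one may need the constant $2A$, or the radius interpretation should be used. Define $f'(v) := R_{f(v)}$. Then $f'$ is surjective by construction, since every cell $R_x$ with $x\in X$ is hit.

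Next I would check \ref{quasiisom:1} for $f'$. Let $\pi: V(H)\to V(H')$ be the contraction map. For any $u,w\in V(H)$ we have $d_{H'}(\pi u,\pi w)\le d_H(u,w)$. Conversely, a path of length $\ell$ in $H'$ lifts to a walk in $H$ of length at most $\ell + (\ell+1)\cdot 2A$, since inside each cell any two vertices are joined within the cell by a path of length at most $2A$ through $x$. Hence $d_H(u,w)\le (2A+1)\,d_{H'}(\pi u,\pi w)+2A$, so $\pi$ is a $(2A+1,2A)$-quasi-isometry. This is the same kind of argument as in \Cr{bounddiamsets}. Composing, $f'=\pi\circ f$ is an $(M',A')$-quasi-isometry with $M' = M(2A+1)$ and $A'$ of order $MA+A$, computed routinely.

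Finally, injectivity. If $f$ is injective, then $f'$ maps $v$ to the cell of $f(v)$. The cells $R_x$ for distinct $x\in X$ are distinct (each contains its own centre $x$, and the partition refinement keeps $x\in R_x$). Thus $f'(v)=f'(w)$ implies $f(v)=f(w)$, and hence $v=w$.

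The main obstacle I expect is the refinement step. The cells must be made disjoint while keeping each one connected and keeping $x\in R_x$. I would assign each vertex $y$ to the nearest centre, breaking ties by a fixed well-order on $X$ (possible since graphs are countable). If $y$ is assigned to $x$ and $y'$ is the next vertex on a geodesic from $y$ to $x$, then $y'$ is also assigned to $x$: any centre at least as close to $y'$, and earlier in the order, would be at least as close to $y$, with ties propagating the same way. This gives connectivity.
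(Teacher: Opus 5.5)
Your proof is correct and follows the paper's route: take the Voronoi decomposition of $H$ with respect to $f(V(G))$, contract each connected, bounded-radius cell, and compose $f$ with the contraction map; injectivity carries over because distinct centres lie in distinct cells. Your extra care is warranted. The tie-breaking argument is what keeps the refined cells connected, which the paper's sketch does not address. Your caveat about the constant is also right: a cell has radius at most $A$ but diameter up to $2A$, so under the paper's diameter-based definition you get a $2A$-shallow contraction minor. For example, take $G$ a single vertex mapped to the middle of a $3$-vertex path $H$ with $A=1$; then the diameter version with constant $A$ fails, which the paper's one-line proof glosses over.
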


\begin{proof}
    Consider a Voronoi cell decomposition of $H$ with respect to $f(G)$. Note that any Voronoi cell has diameter at most $A$ by \ref{quasiisom:2}, and it is connected. Thus by contracting each Voronoi cell, we obtain a contraction minor of $H$ that is $(A+1,0)$-quasi-isometric to $H$.
\end{proof}

Next, applying the Voronoi decomposition to \g instead of $H$, we improve preimages of vertices under any quasi-isometry.

\begin{proposition} \label{connected preimage}
    Suppose $f:G\to H$ is a $(M,A)$-quasi-isometry between graphs. Then there is a $(M,A')$-quasi-isometry $f':G\to H$ \st\ \fe\ $y\in H$, the pre-image $f'^{-1}(y)$ is empty or connected. Moreover, if $f$ is surjective then so is $f'$.
\end{proposition}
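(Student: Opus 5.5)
We will use a Voronoi cell decomposition of $G$, as suggested by the remark preceding the statement. For each $y\in f(V(G))$ choose a representative $x_y\in f^{-1}(y)$. Let $X:=\{x_y\mid y\in f(V(G))\}$, and take a Voronoi cell decomposition $\{R_y\}$ of $V(G)$ with respect to $X$. Recall that the cells are connected and, after refinement, pairwise disjoint; we keep the refinement in which each cell contains a geodesic from each of its points to its centre, for example by breaking ties via a fixed well-order on the centres. We then define $f'(v):=y$ for every $v\in R_y$. By construction $f'^{-1}(y)=R_y$ is connected when $y\in f(V(G))$ and empty otherwise. Moreover $f'$ and $f$ have the same image, so $f'$ is surjective whenever $f$ is.

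The key estimate is a bound on the distance from each vertex to its Voronoi centre. Fix $v\in R_y$ and let $z=f(v)$, so that $x_z\in X$ and $f(x_z)=f(v)$. By \ref{quasiisom:1} we get $M^{-1}d_G(v,x_z)-A\le 0$, that is, $d_G(v,x_z)\le MA$. Since $v$ is closest to $x_y$ among the centres, $d_G(v,x_y)\le MA$. It follows that $d_H(f(v),f'(v))=d_H(f(v),f(x_y))\le M\cdot MA+A$.

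Consequently $f'$ lies within uniformly bounded distance $B:=M^2A+A$ of $f$. The quasi-isometry inequalities for $f'$ then follow from those of $f$ with multiplicative constant still $M$ and additive constant $A':=A+2B$, and quasi-surjectivity \ref{quasiisom:2} also holds with constant $A'$, since $f'$ has the same image as $f$. This step is routine.

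The only delicate point is connectivity of the cells after tie-breaking. With a well-ordered tie-breaking, if $v\in R_y$ and $w$ is the next vertex on a $v$--$x_y$ geodesic, then $d(w,x_y)=d(v,x_y)-1$. For any other centre $x'$ we have $d(w,x')\ge d(v,x')-1\ge d(v,x_y)-1$, with equality only if the tie at $v$ was already resolved in favour of $x_y$. Hence $w\in R_y$, which gives the claimed connectivity. I expect no real obstacle beyond verifying this tie-breaking argument carefully.
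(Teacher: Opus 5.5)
Your proof is correct and follows essentially the same route as the paper: a Voronoi decomposition of $G$ around one representative per fibre of $f$, the bound $d_G(v,x_{f'(v)})\le MA$, and the observation that $f'$ stays within uniformly bounded distance of $f$ while having the same image. Two small improvements over the paper: your well-ordered tie-breaking spells out the connectivity of the refined cells, which the paper leaves implicit, and bounding $d_H(f(v),f(x_y))$ directly via $d_G(v,x_y)\le MA$ gives the slightly better constant $M^2A+A$ instead of the paper's $2AM^2+A$.
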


\begin{proof}
    Consider a Voronoi cell decomposition of $G$ with respect to $\{ p_y \}_{y \in f(V(G))}$, where each $p_y$ is an arbitrary point in $f^{-1}(y)$.
    First note that for any $x \in V(G)$, we have
    \begin{equation} \label{Vor 1}
        d_G(x, p_{f(x)}) \leq AM.
    \end{equation}

    \comment{
    Also, we claim that \labtequ{Vor 2}{
    any Voronoi cell $R_y := R_{\{p_y\}}$ has diameter at most $2AM$.}
    Indeed for any points $x,x'$ belonging to a single Voronoi cell $R_y$, we have $d_G(x,x')\leq d_G(p_{y}, x) + d_G(p_{y}, x') \leq  d_G(p_{f(x)}, x) + d_G(p_{f(x')}, x') \leq 2AM$.
    }

    We define $f':G\to H$ by letting $f'(x) = y$ whenever $x \in R_y$.
    By construction, every pre-image $f'^{-1}(y) = R_y$ is a Voronoi cell and hence it is connected.
    With this definition, for every $x \in V(G)$, we have $x \in R_{f'(x)}$ and thus $d_G(x, p_{f'(x)}) \leq d_G(x, p_{y})$ for any $y \in f(V(G))$.
    Combining this with \eqref{Vor 1}, we obtain
    
    \begin{equation} \label{Vor 2}
        d_G(x, p_{f'(x)}) \leq AM.
    \end{equation}

    We claim that $f'$ is a quasi-isometry.
    For this, it is sufficient to show that $f$ and $f'$ differ point-wise by a uniform constant.
    Precisely, for every $x \in G$, we have that 
    $f(x) = f(p_{f(x)})$ and $f'(x) = f(p_{f'(x)})$, and hence $d_H(f(x),f'(x)) = d_H(f(p_{f(x)}),f(p_{f'(x)}))$.
    Since $f$ is a $(M,A)$-\qy, this quantity is at most $M d_G(p_{f(x)}, p_{f'(x)}) + A$.
    Applying the triangle inequality and \eqref{Vor 1} and \eqref{Vor 2}, we can bound the latter by\\ $M (d_G(x, p_{f(x)}) + d_G(x, p_{f'(x)})) + A \leq M(AM+AM)+A=2AM^2+A$.
    Since $f$ is a $(M,A)$-quasi-isometry, we deduce that $f'$ is a $(M, 2(2AM^2+A)+A)$-quasi-isometry.
\end{proof}

Let us summarize the above facts:

\begin{corollary} \label{leaves and sh contr}
     Let \cx\ be a class of graphs and $G$ be a graph in $\mathcal{QI}(\cx)$. Then 
     \begin{enumerate}
         \item \label{lshc i} if \cx\ is closed under adding leaves, there is an injective quasi-isometry from $G$ to an element of $\cx$;
         \item \label{lshc ii} if \cx\ is closed under shallow contraction minors, there is a surjective quasi-isometry from $G$ to an element of $\cx$, obtained by contracting connected subgraphs;
         \item \label{lshc iii} if \cx\ is closed under adding leaves and under shallow contraction minors, 
         then $G$ is bi-Lipschitz equivalent to an element of $\cx$.
     \end{enumerate}
\end{corollary}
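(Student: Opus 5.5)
We prove the three items in turn, each by combining the propositions just established.

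For \ref{lshc i}, let $f:G\to H$ be an $(M,A)$-quasi-isometry with $H\in\cx$. By \Prr{injective}, there is an injective quasi-isometry $f'$ from $G$ into a graph $H'$ obtained from $H$ by attaching leaves. Since \cx\ is closed under adding leaves, $H'\in\cx$, which gives \ref{lshc i}.

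For \ref{lshc ii}, we instead apply \Prr{surjective} to $f$. This yields a surjective quasi-isometry $f'$ from $G$ onto an $A$-shallow contraction minor $H''$ of $H$, obtained by contracting the (connected) Voronoi cells. Closure of \cx\ under shallow contraction minors gives $H''\in\cx$. The constants stay bounded: contracting the cells gives an $(A+1,0)$-quasi-isometry $H\to H''$, and composing it with $f$ yields an $(M',A')$-quasi-isometry. To make sure the composite is surjective rather than merely quasi-surjective, we note that every cell contains a point of $f(G)$, namely its centre.

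For \ref{lshc iii}, we chain the two constructions. First apply \ref{lshc i} to get an injective quasi-isometry $f_1:G\to H_1$ with $H_1\in\cx$. Then apply \Prr{surjective} to $f_1$, which preserves injectivity, to obtain a map $f_2:G\to H_2$ that is both injective and surjective. Here $H_2$ is a shallow contraction minor of $H_1$, so $H_2\in\cx$. Thus $f_2$ is a bijective $(M',A')$-quasi-isometry.

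The remaining step, and the main obstacle, is upgrading this bijective quasi-isometry to a bi-Lipschitz map, i.e.\ removing the additive constant $A'$. Distinct vertices are at distance at least $1$ in both graphs, and $f_2$ is bijective, so $d_{H_2}(f_2(g),f_2(h))\geq 1$ and $d_G(g,h)\ge 1$ whenever $g\neq h$. Hence
\[
d_{H_2}(f_2(g),f_2(h))\le M'd_G(g,h)+A'\le (M'+A')\,d_G(g,h).
\]
For the lower bound, if $d_G(g,h)\le 2MA'$-ish (precisely, $\le 2M'A'$), then $d_{H_2}\ge 1\ge d_G/(2M'A')$. Otherwise, $M'^{-1}d_G-A'\ge d_G/(2M')$. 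Taking $L=\max(M'+A',\,2M'A',\,2M')$ then shows that $f_2$ is an $L$-bi-Lipschitz equivalence. The delicate point to check is that \Prr{surjective} really preserves injectivity after contraction. This holds because each Voronoi cell contains exactly one point of $f_1(G)$ when $f_1$ is injective, so distinct vertices of $G$ land in distinct contracted vertices.
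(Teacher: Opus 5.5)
Your proposal is correct and follows the paper's proof, which simply cites Proposition~\ref{injective} for (i), Proposition~\ref{surjective} for (ii), and both in succession for (iii). Your additional checks make explicit what the paper leaves implicit: each Voronoi centre lies only in its own cell, which gives surjectivity and preserves injectivity, and a bijective quasi-isometry between graphs is bi-Lipschitz because distinct vertices are at distance at least $1$. The only slip is harmless: the quotient map $H\to H''$ needs a nonzero additive constant in general, since it collapses cells of diameter up to $2A$, so it is not literally $(A+1,0)$; but the composite is still a quasi-isometry, which is all you use.
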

\begin{proof}
    Item~\ref{lshc i} is just \Prr{injective}, and~\ref{lshc ii} is just \Prr{surjective}. Item~\ref{lshc iii} follows by applying both \Prr{injective} and \Prr{surjective} in either order. 
\end{proof}

\comment{
The hypothesis on \cx\ are necessary, and its not sufficient e.g.\ for \cx\ to contain infinite graphs all in the same quasi-isometry class.
For example, let \cx\ consist of all the countable graphs that are union of $P_\infty \vee K_{1,\infty}$ and uniformly bounded-diameter disjoint trees. Then \cx\ is closed under adding leaves, however the infinite path, which is quasi-isometric to every element of \cx, does not admit a surjective map onto an element of \cx.
Similarly, let $\cx = \{P_\infty\}$, which is closed under shallow contractions, however $P_\infty \vee K_{1,\infty}$, which is quasi-isometric to $P_\infty$, does not admit an injective map in an element of \cx. 
}

\comment{
\begin{theorem} \label{diagram}
    Let \cc\ be a class of graphs and $G$ be a graph.
    \comment{Consider the following properties:
    \begin{enumerate}
        \item $G \in QI(\cc)$
        \item there is $C \in \cc$ such that $C$ coarsely spans $G^K$ for some $K \in \N$
        \item there is $C \in \cc$ such that $G$ coarsely spans $C^K$ for some $K \in \N$
        \item there is $C \in \cc$ such that $G^K$ has bounded $C$-stretch and $C^K$ has bounded $G$-stretch for some $K \in \N$.
    \end{enumerate}
    }
    We have the following implications:
\end{theorem}

\begin{tikzpicture}[
    node distance = 1cm and 1cm,
    every node/.style = {rectangle, draw, minimum size=1cm},
    gen_arrow/.style = {-{Stealth}, thick},
    spec_arrow_leaves/.style = {-{Stealth}, thick, dashed, teal},
    spec_arrow_contr/.style = {-{Stealth}, thick, dashed, red},
    label_style/.style = {draw=none, font=\small, text width=2.5cm, align=center}
]

    \node (iv) [align=center] {$\exists C \in \cc$ s.t.\\
    $G^K$ has bounded $C$-stretch and \\$C^K$ has bounded $G$-stretch};
    \node (ii) [below left=of iv, align=center] {$\exists C \in \cc$ s.t.\ $C$ \\ coarsely spans $G^K$};
    \node (iii) [below right=of iv, align=center] {$\exists C \in \cc$ s.t.\ $G$ \\ coarsely spans $C^K$};
    \node (i) [below=3cm of iv, align=center] {$G \in QI(\cc)$};

    \draw[gen_arrow] (iv) -- (ii);
    \draw[gen_arrow] (iv) -- (iii);
    \draw[gen_arrow] (ii) -- (i);
    \draw[gen_arrow] (iii) -- (i);

    \draw[spec_arrow_leaves] (i) to [bend right=20] node[label_style, right, pos=0.5] {Closed under\\adding leaves} (iii);
    \draw[spec_arrow_leaves] (ii) to [bend left=20] node[label_style, left, pos=0.5] {Closed under\\adding leaves} (iv);

    \draw[spec_arrow_contr] (i) to [bend left=20] node[label_style, left, pos=0.5] {Closed under\\shallow contractions} (ii);
    \draw[spec_arrow_contr] (iii) to [bend right=20] node[label_style, right, pos=0.5] {Closed under\\shallow contractions} (iv);

\end{tikzpicture}
}
\comment{
\begin{proof}
    The implications represented with black arrows are trivial.

    The backward implications in the bottom part of the diagram follow by applying Proposition \ref{surjective} and Proposition \ref{injective}. Precisely, let $f:G \rightarrow C$ be a quasi-isometry where $C \in \cc$. If \cc\ is closed under adding leaves, then we can assume $f$ is injective and thus $G$ coarsely spans $C^K$. If \cc\ is closed under shallow contractions, then we can assume $f$ is surjective and thus its inverse $g: C \rightarrow G$ is injective and we have that $C$ coarsely spans $G^K$.

    As for the backward implications in the upper part of the diagram, we prove them separately in a similar way.
    
    Suppose first that \cc\ is closed under adding leaves and some $C \in \cc$ coarsely spans a graph $G$. Then pick a Voronoi cell decomposition of $G$ with respect to $V(C)$. The cells have bounded diameter and are connected in their own metric. Construct $C' \in \cc$ by adding to each vertex of $C$ a tree spanning the corresponding Voronoi cell. It is easy to check that $G$ has bounded $C'$-stretch. Applying this to $G=G^K$ and exploiting the equivalence of \Cr{cor: qi and bs}, we have checked the green implication.

     Suppose now that \cc\ is closed shallow contractions and $G$ coarsely spans a graph $C^K$, where $C \in \cc$. Then pick a Voronoi cell decomposition of $C^K$ with respect to $V(G)$. The cells have bounded diameter and are connected in their own metric. Construct $C' \in \cc$ by contracting each cell to a vertex of $G$. It is easy to check that $C'^K$ has bounded $G$-stretch. Again by using the equivalence of \Cr{cor: qi and bs}, we have checked the red implication.
    
\end{proof}
}

\medskip
We remark that \Cr{leaves and sh contr} applies in particular to graphs embeddable into an arbitrary surface and that a similar fact was also noted in \cite[Lemma 2.3]{DaviesStringGraphsQuasiPlanar}.

\begin{corollary} \label{biL planar}
    Suppose \g is quasi-isometric to a planar graph $H$. Then \g is bi-Lipschitz equivalent to a planar graph $H'$. 
\end{corollary}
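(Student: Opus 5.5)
The plan is to apply \Cr{leaves and sh contr}~\ref{lshc iii} with $\cx=\cp$, the class of planar graphs. For this we only need to check the two closure properties in its hypothesis; the corollary then hands us the bi-Lipschitz equivalence. Equivalently, we can compose the two refinements explicitly: first \Prr{injective}, then \Prr{surjective}.

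\textbf{Step 1 (adding leaves).} Starting from a quasi-isometry $f:G\to H$, \Prr{injective} gives an injective quasi-isometry $f'$ from $G$ into a graph $H_1$. Here $H_1$ is obtained from $H$ by attaching leaves to its vertices. Attaching pendant vertices preserves planarity: in a planar embedding, draw each new leaf inside a small disc around its attachment vertex, within a face incident to it. This works for countably many leaves too, since there is room for countably many short disjoint arcs near each vertex. So $H_1$ is planar.

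\textbf{Step 2 (shallow contraction).} Next we apply \Prr{surjective} to $f'$. It gives a quasi-isometry $f''$ that is surjective onto an $A$-shallow contraction minor $H'$ of $H_1$, and it remains injective because $f'$ was. Contracting edges preserves planarity: contracting a connected subgraph of a plane graph can be realised in the plane, and contraction minors are minors. So $H'$ is planar. Now $f''$ is a bijective quasi-isometry between graphs, and it remains to see that it satisfies \ref{quasiisom:1} with $A=0$.

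\textbf{Main obstacle: eliminating the additive constant.} For distinct vertices $x\neq y$ of $G$ we have $d_G(x,y)\geq 1$. Likewise, for distinct vertices $u\neq v$ of $H'$ we have $d_{H'}(u,v)\geq 1$, since injectivity of $f''$ makes the images of distinct vertices distinct. An additive error $A$ can therefore be absorbed into the multiplicative constant, giving
\[
M d+A\leq (M+A)d \quad\text{and}\quad M^{-1}d-A\geq \ldots
\]
The lower bound needs some care, because $M^{-1}d-A$ could be negative for small $d$. For this we use the inverse map, which is also a quasi-isometry: apply its upper bound to get $d_G(x,y)\leq M''d_{H'}(f''(x),f''(y))+A''\leq (M''+A'')\,d_{H'}(f''(x),f''(y))$. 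Connectivity issues do not arise, since if $G$ is disconnected, distances are infinite on both sides consistently.

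We conclude that $f''$ is a bi-Lipschitz equivalence onto the planar graph $H'$, as required.
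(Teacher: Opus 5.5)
Your proposal is correct and matches the paper, which obtains this corollary by applying \Cr{leaves and sh contr}~\ref{lshc iii} to the class of planar graphs; you also spell out the step the paper leaves implicit, namely that a bijective quasi-isometry between graphs absorbs its additive constant and so is a bi-Lipschitz equivalence. One small caveat: for infinite graphs, your ``small disc in an incident face'' drawing argument can fail for pathological embeddings (e.g.\ cycles accumulating at a vertex), so it is cleaner to get closure of countable planar graphs under leaf attachment and contraction from Kuratowski's theorem, which remains valid for countable graphs.
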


In general, we cannot let $H'=H$ here, as shown by Georgakopoulos \& Vigolo \cite{GeoVigCoa}.

\section{Cover intersection graphs and quasi-isometries}

In this section we prove \Tr{qi implies iter scig}, the main effort being the forward implication.
\medskip

Let $G,H$ be quasi-isometric graphs.
The following lemma will allow us to reduce to the case where $G,H$ are bi-Lipschitz equivalent.

\begin{lemma} \label{VG = VH}
    Let $G,H$ be $(M,A)$-quasi-isometric graphs. Then $H$ is bi-Lipschitz equivalent to a graph $G'$ which is a \cig\ of a subdivision of $G$, where the diameter of the sets of the cover depends only on $M,A$. 
\end{lemma}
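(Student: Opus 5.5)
We first reduce to the case where the quasi-isometry $h\colon V(H)\to V(G)$ is injective and surjective, i.e.\ a bijection, and then realise the edges of $H$ by means of a cover. Start from an $(M,A)$-quasi-isometry $h\colon V(H)\to V(G)$ (quasi-inverses exist with constants depending on $M,A$). By \Prr{connected preimage} we may assume that every fibre $h^{-1}(y)$ is empty or connected in $H$. Every fibre also has diameter at most $MA$ in $H$: two points with the same image are at $H$-distance at most $MA$ by \ref{quasiisom:1}. The subdivision of $G$ will absorb the fibres, and it is also what lets us separate distinct preimages.

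Concretely, let $K$ be a bound (depending only on $M,A$) on fibre sizes, or more precisely on the ply we need. Form $G^\circ$ from $G$ by subdividing every edge, and give each vertex $y$ of $G$ enough ``private'' vertices to host the fibre $h^{-1}(y)$. To keep the result a subdivision of $G$ rather than a graph with leaves attached, we use the subdivision vertices on edges at $y$. If the fibre is larger than $\deg(y)$, or if $y$ is isolated, this fails. I will handle that by subdividing edges into paths whose length is bounded by a function of $M,A$ and letting the needed points sit inside them. If even that is insufficient, I would first pass to $H$ modified so that fibres are singletons, noting that being bi-Lipschitz equivalent to $H$ is what is required, so we may contract the connected, bounded-diameter fibres as in \Prr{surjective}. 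This gives a graph bi-Lipschitz to $H$ with an injective quasi-isometry to $G$. The statement tolerates this, since the conclusion is only up to bi-Lipschitz equivalence. Vertices of $G$ not in the image are at distance at most $A$ from the image. Such a vertex is assigned, by a Voronoi decomposition of $G$ with respect to $h(V(H))$, to the cell of its nearest image point.

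Now define the cover of the subdivided graph $G^\circ$. For each $v\in V(H)$ the set $S_v$ is the Voronoi cell of $h(v)$, made connected (Voronoi cells are connected in geodesic spaces), together with, for each neighbour $w$ of $v$ in $H$, half of a chosen geodesic from $h(v)$ to $h(w)$ in $G^\circ$. The geodesic has length at most $c(M+A)$, and we split it at a midpoint that is a subdivision vertex; this is why subdividing once is useful. Then $S_v$ and $S_w$ meet whenever $vw\in E(H)$. Each $S_v$ has diameter bounded in terms of $M,A$. The cover $\{S_v\}$ covers $G^\circ$ because the Voronoi cells do. The intersection graph $G'$ has vertex set $V(H)$ and contains all edges of $H$. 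Its extra edges join $v,w$ with $d_G(h(v),h(w))$ bounded, and hence with $d_H(v,w)$ bounded. Thus the identity $V(H)\to V(G')$ is Lipschitz in both directions, i.e.\ a bi-Lipschitz equivalence.

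The main obstacle is the first step, making $h$ a bijection onto a set that yields one cover set per vertex of $H$ while the host remains a subdivision of $G$. The key point to check is whether contracting fibres in $H$ is legitimate (it changes $H$, whereas the lemma asks for bi-Lipschitz equivalence to $H$ itself). If it is not, I would instead use distinct cover sets for distinct fibre points. These would be forced apart by using distinct subdivision vertices near $h(v)$, with enough room guaranteed by long subdivisions along edges, or along a bounded-length path in a bounded ball if the degree is small.
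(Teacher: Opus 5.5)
\textbf{Gap: your first step fails.} The step you call the main obstacle, making $h$ a bijection, does not go through, and none of your proposed fixes work.

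\emph{No bound on fibre sizes.} Fibre sizes are not bounded in terms of $M,A$. Take $H=K_{1,n}$ and $G=K_1$: the constant map is a $(1,2)$-quasi-isometry with a fibre of size $n+1$. Since $G$ has no edges, no subdivision of $G$, bounded or not, supplies $n+1$ ``private'' host vertices.

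\emph{Contracting fibres is not legitimate.} Contracting the fibres of $H$ changes the vertex set, so the result is only quasi-isometric to $H$, not bi-Lipschitz equivalent to it. In the example, $K_1$ is not bi-Lipschitz equivalent to $K_{1,n}$.

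\textbf{The missing idea: the first step is unnecessary.} A cover is an \emph{indexed} family, so distinct vertices of $H$ may receive overlapping or even identical sets, and no private vertices are needed. Your third paragraph already works verbatim for non-injective $h$; Proposition~\ref{connected preimage} is not needed either. Here is why:
\begin{itemize}
\item Take the Voronoi decomposition of $G^\circ$ with respect to $h(V(H))$. For each $v\in V(H)$, let $S_v$ be the cell of $h(v)$ together with half-geodesics towards $h(w)$ for the $H$-neighbours $w$ of $v$.
\item All vertices in one fibre get sets containing the same cell, so they form a clique in $G'$. This is harmless, since fibres have $H$-diameter at most $MA$.
\item Each $S_v$ lies in a ball of radius at most $\max\{2A+1,M+A\}$ around $h(v)$. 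Hence $E(H)\subseteq E(G')$, and every edge of $G'$ joins vertices at $H$-distance bounded in terms of $M,A$.
\item Therefore the identity $V(H)\to V(G')$ is a bi-Lipschitz equivalence.
\end{itemize}
One detail is missing. The paper requires $\bigcup S_v=G^\circ$ as graphs, so edges of $G^\circ$ joining distinct cells must also be covered. Add each such edge to the set of one of its endpoints' cells; this keeps the sets connected and of bounded diameter.

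\textbf{Comparison with the paper.} The paper relies on the same freedom. It uses Propositions~\ref{surjective} and~\ref{injective} to make $H$ bi-Lipschitz equivalent to a shallow contraction minor of $G$ with cliques attached. It then realises this graph on the $1$-subdivision of $G$: the stars of Remark~\ref{cig itself} are merged along contracted cells, and each clique comes from repeated singleton sets $\{v\}$ at an original vertex. Your repaired argument is more direct. It builds $G'$ on the vertex set $V(H)$ with $H\subseteq G'$, gives explicit constants, and does not even need the subdivision if $S_v$ contains whole geodesics instead of halves. The paper's route is more modular and reuses the two propositions.
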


Before proving this, we remark the following easy fact.

\begin{remark}\label{cig itself}
    Any graph \g is a \cig\ with subdivisions of itself. Indeed, by subdividing each edge of \g once, and taking the cover consisting of the stars centered at the original vertices of $G$, the resulting \cig\ is isomorphic to $G$.
\end{remark}

\begin{proof}[Proof of \Lr{VG = VH}]
    Let $f:H \rightarrow G$ be a $(M,A)$-quasi-isometry. By Proposition \ref{surjective} and Proposition \ref{injective} combined, $H$ is bi-Lipschitz equivalent to a graph $G'$ obtained by first taking a shallow contraction minor of $G$ and then attaching cliques to its vertices.
    We show that these operations can be realized using a single bounded-diameter $\SCIG$ operation on $G$. The required cover of $G$ is natural and can be obtained by readapting the one as in \Rr{cig itself}. To obtain $G'$ from $G$, cover a subdivision of the latter as in \Rr{cig itself}. Edge contractions can be obtained by merging sets of the cover, and clique attachments by adding new sets to the cover, consisting of a single vertex each, around {the original} vertices of $G$. Since the aforementioned contractions are shallow, our sets have diameter uniformly bounded by a function of $M,A$.
\end{proof}

We remark that such operations (contraction and clique attachment) can also be performed without using subdivisions in a manner similar to what will be done later (that also allows for leaf attachments).

\subsection{Edge-sliding} \label{sec ES}
We now introduce our \defi{edge-sliding}, a flexible operation which modifies a graph $G$ into a graph $H$ on the same vertex set that is bi-Lipschitz equivalent to $G$. 

Let $x,y,z$ be distinct vertices of $G$, and suppose $yz,zx\in E(G)$. We imagine `sliding' the endvertex $z$ of the edge $zx$ along the edge $yz$, while keeping the other endvertex $x$ fixed, to produce the edge $xy$. We thereby call $yz$ the  \defi{slider}, and $x$ the \defi{pivot}. Note that the distance between any two of $x,y,z$ is either 1 or 2 both before or after the operation, and this holds whether we retain or remove the sliding edge $zx$ after the operation. 

We would like to be able to perform an arbitrary number of such slidings simultaneously while keeping the resulting metric distortion bounded. In order for this to work, we need a `frame' of edges that are not allowed to move; these will be exactly the edges in $G\cap H$, and $yz$ above is an example of such an edge. 

Note that the sliding operation described above is invertible, and so rather than saying that $H$ is obtained from $G$, we could think of $G,H$ as `twins'. All this is summarized in the following definition,  defining a symmetric relation between graphs:


\begin{definition}
    Let $G, H$ be graphs. We say that $G,H$ are \defi{edge-sliding twins} if:
    \begin{enumerate}
        \item $V(G) = V(H)=:V$;
        \item for every $xy \in E(G)\sm E(H)$ there is $z \in V$ such that $yz \in E(G \cap H)$ and $zx \in E(H)$;
        \item for every $xy \in E(H) \sm E(G)$ there is $z \in V$ such that $yz \in E(G \cap H)$ and $zx \in E(G)$.
    \end{enumerate}
    In this case, we also say that $G$ is obtained from $H$ via an edge-sliding operation (and vice-versa) and we call $G \cap H$ the \defi{frame}.
\end{definition}

\begin{examp} \normalfont 
    The graphs $G$ and $H$ as in \fig{fig:edge-sliding} are edge-sliding twins.
    Indeed, suppose we want to construct $H$ from $G$. We fix $G \cap H$ (thick edges) as the frame and let:
    \begin{itemize}
        \item $\{2,4\}$ slide to $\{3,4\}$, with $4$ as the pivot and $\{2,3\}$ as the slider;
        \item a copy of $\{3,4\}$ slide to $\{3,5\}$, with $3$ as pivot and $\{4,5\}$ as slider ($\{3,4\}$ also remains as an edge of $H$);
        \item a copy of $\{1,6\}$ slide to
        $\{1,5\}$ (with $1$  as pivot and $\{5,6\}$ as slider) and another copy slide to $\{2,6\}$ (with $6$ as pivot and $\{1,2\}$ as slider).
    \end{itemize}
\end{examp}

\begin{figure}[h]
\centering
\begin{tikzpicture}[
    vertex/.style={circle, fill=black, inner sep=1.2pt},
    blackedge/.style={draw=black, thin},
    blackedge2/.style={draw=black, very thick},
]

\def\xone{-2.0}  \def\yone{-1}
\def\xtwo{0}  \def\ytwo{-1}
\def\xthree{2} \def\ythree{-1}
\def\xfour{2}  \def\yfour{1}
\def\xfive{0}  \def\yfive{1.0}
\def\xsix{-2}   \def\ysix{1.0}

\newcommand{\drawvertices}[1]{
    \node[vertex, label=below:1]  (1#1) at (\xone, \yone) {};
    \node[vertex, label=below:2] (2#1) at (\xtwo, \ytwo) {};
    \node[vertex, label=below:3] (3#1) at (\xthree, \ythree) {};
    \node[vertex, label=above:4] (4#1) at (\xfour, \yfour) {};
    \node[vertex, label=above:5] (5#1) at (\xfive, \yfive) {};
    \node[vertex, label=above:6] (6#1) at (\xsix, \ysix) {};
}

\newcommand{\drawblackedges}[1]{
    \draw[blackedge2] (1#1) -- (2#1) -- (3#1) -- (4#1) -- (5#1) -- (6#1);
    \draw[blackedge2] (2#1) -- (5#1);
}

\begin{scope}[scale=0.9]
    \node at (-3.0, -0.8) {$G$};
    
    \drawvertices{G}
    \drawblackedges{G}

    \draw[blackedge] (1G) -- (6G);
    \draw[blackedge] (2G) -- (4G);

\end{scope}

\begin{scope}[xshift=6cm, scale=0.9]
    \node at (-3.0, -0.8) {$H$};
    
    \drawvertices{H}
    \drawblackedges{H}
    
    \draw[blackedge] (1H) -- (5H);
    \draw[blackedge] (2H) -- (6H);
    \draw[blackedge] (3H) -- (5H);
    
\end{scope}
\end{tikzpicture}

\caption{An example of a pair of edge-sliding twins.}
\label{fig:edge-sliding}
\end{figure}

\begin{remark} \label{rem: est implies biL}
If two graphs $G,H$ are edge-sliding twins, then $G \subseteq H^2$ and $H \subseteq G^2$ and hence $G,H$ are $2$-bi-Lipschitz equivalent.
Moreover, if both $G,H$ are contained in $(G \cap H)^2$, then they are edge-sliding twins.
\end{remark}

The converse implications do not hold in general.
For example, $C_3 \vee C_3 \subseteq C_5^2$ and $C_5 \subseteq (C_3 \vee C_3)^2$, but $C_3 \vee C_3, C_5$ are not edge-sliding twins; also $K_{1,3}$ and $C_4$ are edge-sliding twins, but neither is contained in the square of their intersection, which is disconnected.

\begin{definition}
    Let $G, H$ be graphs. We say that $G$ and $H$ are {($k$-)}iterated edge-sliding twins, if there exist $k \in \N$ and a sequence of graphs $H=G_0, G_1, G_2, \ldots, G_k = G$ where $G_i$ and $G_{i-1}$ are edge-sliding twins for every $i \in [k]$. 
\end{definition}

\begin{lemma} \label{lem: iest}
Let $G$ be a connected graph and let $x, y, x', y' \in V(G)$.
Then $G \cup \{xy\}$ and $G \cup \{x'y'\}$ are {$(d_G(x,x')+d_G(y,y'))$-}iterated edge-sliding twins.
\end{lemma}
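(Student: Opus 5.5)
We argue by induction on $d:=d_G(x,x')+d_G(y,y')$, reducing to the case where only one endpoint moves by a single step. If $d=0$ there is nothing to prove, and being iterated edge-sliding twins is transitive, with the lengths of the sequences adding up. So we pick a geodesic $x=x_0x_1\dots x_a=x'$ and a geodesic $y=y_0y_1\dots y_b=y'$ in $G$, where $a=d_G(x,x')$ and $b=d_G(y,y')$. We then pass through the chain of graphs
\[ G\cup\{x_0y_0\},\ G\cup\{x_1y_0\},\ \dots,\ G\cup\{x_ay_0\},\ G\cup\{x_ay_1\},\ \dots,\ G\cup\{x_ay_b\}. \]
This chain has exactly $a+b$ steps. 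It therefore suffices to show the following: if $z,u,w\in V(G)$ and $uw\in E(G)$, then $G\cup\{zu\}$ and $G\cup\{zw\}$ are edge-sliding twins (a $1$-iterated pair), or are equal.

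For this single step, set $G_1:=G\cup\{zu\}$ and $G_2:=G\cup\{zw\}$. Both contain $G$, so the frame $G_1\cap G_2$ contains $G$, and the only edges that can differ are $zu$ and $zw$. We check the definition directly. Suppose $zu\in E(G_1)\setminus E(G_2)$. We write this edge as $\{u,z\}$ and take $u$ as the endpoint adjacent to the slider: the slider is $uw\in E(G)\subseteq E(G_1\cap G_2)$, and the pivot is $z$, with $wz\in E(G_2)$. This matches condition~2, reading the edge $xy$ there as $zu$, so that $y=u$, the third vertex is $w$, and $x=z$. Condition~3 for the edge $zw$ is symmetric, using the same slider $wu$ and the edge $zu\in E(G_1)$.

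The only point needing care is degeneracy. The new edge may coincide with an existing edge, in which case the two graphs are equal, or the same edge may arise from different intermediate steps; either way the case is trivial. It can also happen that $z\in\{u,w\}$, so that $zu$ or $zw$ would be a loop, or that the definition's requirement that $x,y,z$ be distinct fails. We handle this by treating $G\cup\{vv\}$ as $G$ and checking that the step still yields twins; alternatively, we choose the geodesics so that such coincidences only occur where both graphs already coincide. If a loop cannot be avoided, one could instead note that the lemma is presumably intended for $x\neq y$ and $x'\neq y'$, and route around the problem. Multigraph issues, such as $zu$ already lying in $G$, cause no trouble, because then $G_1=G$ and $G\subseteq G_2$; the definition of twins still applies, since any extra edge of $G_2$ slides along $uw$ to $zu\in E(G)$.

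Connectivity of $G$ is used only to guarantee that the geodesics exist, that is, that $d$ is finite. The main (minor) obstacle is this bookkeeping of degenerate cases. The core sliding step is immediate from the definition, and the count of $a+b$ steps gives the stated bound $d_G(x,x')+d_G(y,y')$.
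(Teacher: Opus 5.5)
Your proof is correct and is essentially the paper's argument: the same chain of $d_G(x,x')+d_G(y,y')$ single slides, first along a geodesic from $x$ to $x'$ and then along one from $y$ to $y'$, each step using $G$ as the frame, an edge of the geodesic as the slider, and the fixed endpoint as the pivot. Your degeneracy discussion can be tightened, and the hedging about the lemma being ``presumably intended'' for $x\neq y$ can be dropped: if the pivot $z$ lies in $\{u,w\}$, then (ignoring loops) both $G\cup\{zu\}$ and $G\cup\{zw\}$ equal $G$, which is trivially its own edge-sliding twin.
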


\begin{proof}
Let $k=d_G(x,x')$ and $k'=d_G(y,y')$, and consider two {geodesics} $P = (x = v_0, v_1, \ldots, v_k = x')$ and $Q = (y = w_0, w_1, \ldots, w_{k'} = y')$.
Then 
$$G \cup \{xy\}, G \cup \{v_1y\}, \ldots, G \cup \{v_{k-1}y\}, G \cup \{x'y\}, G \cup \{x'w_1\}, \ldots, G \cup \{x'y'\}$$
is a sequence {of length $k+k'+1$} in which consecutive graphs are edge-sliding twins, whereby each edge-sliding operation uses $G$ as the frame (and each edge in $P\cup Q$ is used as the slider exactly once
\end{proof}

In the setting of \Lr{lem: iest}, we call such a sequence from $G \cup \{xy\}$ to $G \cup \{x'y'\}$ a \defi{sliding of $xy$ to $x'y'$ (along $G$)}.

\begin{remark} \label{rem: est eventually}
    If two graphs $G, H$ are $k_0$-iterated edge-sliding twins, then they are $k$-iterated edge-sliding twins for every $k \geq k_0$.  In particular, any sliding of $xy$ to $x'y'$ of length $k_0$ along a connected graph $G$ can be extended to a sliding of length $k \geq k_0$.
\end{remark}

\begin{lemma} \label{lem: biL and est}
    Let $G,H$ be graphs. The following are equivalent:
    \begin{enumerate}
        \item \label{bil-est i} $G,H$ are bi-Lipschitz equivalent with constant $M \geq 1$;
        \item \label{bil-est ii} $G,H$ are $k$-iterated edge-sliding twins, for some $k \geq 0$;  
    \end{enumerate}
    Moreover, if $G,H$ satisfy the conditions above, then $k \leq 2 \lceil M \rceil $ and $M \leq 2^k$.
\end{lemma}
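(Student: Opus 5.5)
Interpretation: a bi-Lipschitz equivalence is a bijection, so we identify $V(G)=V(H)=V$ via $\varphi$. Then \ref{bil-est i} says $M^{-1}d_G \le d_H \le M d_G$ on $V$.

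\emph{Direction \ref{bil-est ii}$\Rightarrow$\ref{bil-est i}.} This is immediate from \Rr{rem: est implies biL}. Consecutive twins $G_{i-1},G_i$ satisfy $G_i\subseteq G_{i-1}^2$, so $d_{G_{i-1}}\le 2d_{G_i}$, and symmetrically. Composing the identity maps along the chain gives $2^{-k}d_G\le d_H\le 2^k d_G$. Hence the identity is a bi-Lipschitz equivalence with $M\le 2^k$.

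\emph{Direction \ref{bil-est i}$\Rightarrow$\ref{bil-est ii}.} Let $K=\lceil M\rceil$. Every edge $xy\in E(H)$ has $d_G(x,y)\le K$, and every edge of $G$ has $d_H$-length at most $K$.

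The plan is to pass through the intermediate graph $G\cup H$ by edge-sliding along fixed frames. The central move is to slide each edge $xy$ of $H\setminus G$ into an edge of $G$ along a $G$-geodesic from $x$ to $y$, doing this for all edges simultaneously with frame $G$.

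Concretely, fix for each $xy\in E(H)\setminus E(G)$ a $G$-geodesic $x=v_0,v_1,\dots,v_m=y$, with $m\le K$. At step $j$, the edge $xy$ is replaced by $v_jy$, as in \Lr{lem: iest}, with pivot $y$ and slider $v_{j-1}v_j\in E(G)$. Using \Rr{rem: est eventually}, pad so that every edge uses exactly $K-1$ steps, ending at $v_{m-1}y\in E(G)$; an edge can also stay put.

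The key point is that the slidings of different edges never interfere. Each graph in the sequence is $G\cup\{\text{current copies}\}$, and consecutive graphs are twins with frame containing $G$, since condition (ii)/(iii) of the definition only asks each moved edge to have its own witness $z$ with slider in the frame. There is one technicality: a moved copy might coincide with an existing edge, or an edge may need to both stay and be moved. Both cases are allowed, because the definition only constrains edges in the symmetric difference.

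This gives that $G\cup H$ and $G$ are $(K-1)$-iterated twins. Symmetrically, using frame $H$, $G\cup H$ and $H$ are $(K-1)$-iterated twins. Concatenating the two chains yields $k\le 2K-2\le 2\lceil M\rceil$.

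\emph{Main obstacle.} The step needing the most care is checking the twin conditions at each simultaneous step. Suppose an edge $ab$ is removed at step $j$ because its copy moved to $a'b$. Condition (ii) then requires a $z$ with $bz$ in the frame and $za\in E(\text{next graph})$. This must be verified in the correct orientation of the sliding: the pivot is the fixed endpoint and the slider lies in the frame, so $z=v_j$ works. The case where the removed edge is still present as another copy needs no witness, which handles multi-use. I will also check that the frame $G_{i-1}\cap G_i$ contains $G$ throughout, so that sliders remain available.
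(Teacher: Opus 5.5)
Your proposal is correct and follows essentially the paper's proof. Like the paper, you pass through $G\cup H$: in parallel, and with the unchanged graph as frame, you slide each edge of one graph that is missing from the other along a geodesic of the other graph. For the converse you iterate the remark that twins satisfy $G\subseteq H^2$ and $H\subseteq G^2$. Your step count even gives the slightly sharper bound $k\le 2\lceil M\rceil-2$.

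One slip to fix in the write-up: in your obstacle paragraph the roles of $a$ and $b$ in the witness condition are swapped. If $ab$ moves to $a'b$ with pivot $b$, the requirement is that $aa'$ lies in the frame and $a'b$ lies in the next graph, and $z=a'=v_j$ satisfies this.
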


\begin{proof}
    To show that \ref{bil-est i} implies \ref{bil-est ii}, suppose $G$ and $H$ are bi-Lipschitz equivalent via the identity map.
    We claim that $G \cup H$ and $H$ are $\lceil M \rceil$-iterated edge-sliding twins. 
    Indeed, for each $e=uv \in E(G) \setminus E(H)$,
    we have that $d_H(u,v) \leq \lceil M \rceil d_G(u,v) = \lceil M \rceil$.
    For every such $e$, let $e' \in E(H)$ belong to a geodesic path in $H$ between $u,v$.
    By \Lr{lem: iest} and \Rr{rem: est eventually}, we have that $H \cup \{e'\}=H$ and $H \cup \{e\}$ are $\lceil M \rceil$-iterated edge-sliding twins, and there is a sliding of $e'$ to $e$ along $H$ of length $\lceil M \rceil+1$.
    We denote such a sliding with $\{G_{i,e}\}_{i=0}^{\lceil M \rceil}$, where $G_{i,e} = H \cup \{x_{i,e}y_{i,e}\}$.  
    We set $e' = x_{0,e}y_{0,e}$ and $e = x_{\lceil M \rceil,e}y_{\lceil M \rceil,e}$, so that $G_{0,e} = H$ and $G_{\lceil M \rceil,e} = H \cup \{e\}$.
    Then the sequence
    $G_{i} = \bigcup_{e \in E(G) \setminus E(H)} G_{i,e}$,
    where $G_{0} = H$ and $G_{\lceil M \rceil} = G \cup H$, witnesses that $H, G \cup H$ are $\lceil M \rceil$-iterated edge-sliding twins.

    By the same argument, $G \cup H$ and $G$ are $\lceil M \rceil$-iterated edge-sliding twins too.
    Recalling that being iterated edge-sliding twins is an equivalence relation, we conclude that $G$ and $H$ are $2\lceil M \rceil$-iterated edge-sliding twins.

    \medskip
    The implication \ref{bil-est ii} $\Rightarrow$ \ref{bil-est i} follows easily by recursively applying \Rr{rem: est implies biL}, once for each iteration.
\end{proof}

We end this subsection with the following crucial lemma about edge-sliding twins.

\begin{lemma} \label{lem: est implies cig}
Let $G$ and $H$ be edge-sliding twins. Then $H$ is a \scig\ of a \scig\ of \G, whereby the covers consist of subgraphs with diameter at most 4.
\end{lemma}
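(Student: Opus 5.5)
The plan is to pass through an intermediate graph $K$ in which every edge of $G\cup H$ appears as an extra vertex. $K$ should be easy to obtain from $G$ by one \scig\ step, and $H$ easy to read off from $K$ by a second one. Let $F=G\cap H$ be the frame. For every $e=xy\in E(H)\sm E(G)$ fix a witness $z_e$ with $yz_e\in E(F)$ and $z_ex\in E(G)$; for edges of $F$ no witness is needed. The guiding idea, as in \Rr{cig itself} and the proof of \Tr{cig to cm}, is that merging sets of a cover realises contractions, and adding small sets realises new vertices. The sliding $z_ex \rightsquigarrow xy$ should become a set that starts on the subdivided edge $z_ex$, passes through $z_e$, and ends on the subdivided slider $z_ey$.

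\emph{First step.} Subdivide every edge of $G$ into a path of length $3$; write $z\,a\,b\,x$ for the path replacing $zx$. I take the cover consisting of three kinds of sets.
\begin{enumerate}
\item For each vertex $v$ of $G$, the star $\{v\}\cup N(v)$ in the subdivision.
\item For each edge of $G$, its middle edge $ab$.
\item For each $e=xy\in E(H)\sm E(G)$ with witness $z=z_e$, a set $P_e$: the path from the neighbour of $x$ on the subdivided edge $zx$, through $z$, to the neighbour of $y$ on the subdivided slider $zy$.
\end{enumerate}
Every set has intrinsic diameter at most $4$, and the cover covers the subdivision. In the resulting \cig\ $K$ the star-vertices $\{v\}$ and the middle-edge vertices form a subdivision of $G$. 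Each $P_e$ meets the stars of $x$, $z$ and $y$, the middle edges of $zx$ and $zy$, and possibly other sets $P_{e'}$ sharing the vertex $z$.

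\emph{Second step.} Subdivide $K$ again and take a cover indexed by $V$. The set for $v$ is the star of $v$ in the subdivided $K$, together with the following, each taken with the subdivision vertices on the edges connecting them to $v$ so that the set stays connected:
\begin{enumerate}
\item the $K$-vertices of the middle edges of the frame edges at $v$;
\item for each $e\in E(H)\sm E(G)$ with $v\in e$, the vertex $P_e$, assigned to exactly one endpoint of $e$ (say $y$), and extended to include the half-edge of the subdivided $K$ from $P_e$ towards the other endpoint $x$.
\end{enumerate}
Edges of $G\sm H$ are dropped by giving their middle-edge vertices to nobody, except that they remain covered by singleton sets that are then absorbed into one endpoint's set. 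This absorption must avoid creating the adjacency: the singleton is merged only into the half-edge region of a single endpoint. The sets have bounded diameter, at most $4$ after choosing the subdivisions carefully. The intended outcome is that the set for $x$ meets the set for $y$ \iff\ $xy\in E(F)$ or $xy\in E(H)\sm E(G)$, i.e.\ \iff\ $xy\in E(H)$.

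\emph{Main obstacle.} The delicate point is ruling out unwanted intersections. In the first step, two paths $P_e,P_{e'}$ through the same pivot-free vertex $z$ intersect, and a naive star cover creates adjacencies between vertices at distance $2$ in $G$. I would handle this by routing each $P_e$ only through $z$ itself and the first subdivision vertices, which lie in sets of $z$'s star alone. In the second step I would then check that every spurious $K$-adjacency, for instance $P_e\sim P_{e'}$ or $P_e\sim$ the star of $z$, is separated by a subdivision vertex that belongs to a single set of the final cover. This also explains why two iterations and the length-$3$ subdivision are needed.
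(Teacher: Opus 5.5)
\textbf{There is a genuine gap, in your second step.} Your proposal only uses condition (iii) of the definition of edge-sliding twins, the witnesses for $E(H)\setminus E(G)$. Condition (ii) plays no role; it provides, for each $xy\in E(G)\setminus E(H)$, a slider $yz\in E(G\cap H)$ with $zx\in E(H)$.

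Without (ii) the lemma is false. Let $G$ be a single edge $xy$ and let $H$ be the two isolated vertices $x,y$. The union of the sets of a cover is the whole graph, edges included. Hence an intersection graph of a cover of a connected graph is connected, and $H$ can never arise from $G$.

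In your construction this is exactly the step where $\mathrm{mid}(xy)$ is ``dropped''. In the subdivided $K$, the path from $\mathrm{star}(x)$ through $\mathrm{mid}(xy)$ to $\mathrm{star}(y)$ must be covered edge by edge by connected sets. Consecutive edges share a vertex, so this yields a chain of pairwise intersecting sets from the set of $x$ to the set of $y$. A subdivision vertex $u$ lying in a single set separates nothing: the edge from $u$ towards $\mathrm{mid}(xy)$ must lie in that same set, which then meets every set containing $\mathrm{mid}(xy)$.

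The same objection defeats your plan for spurious adjacencies such as $P_e\sim P_{e'}$. All $P_e$ with a common witness $z$ contain $z$, so they are adjacent in $K$. Covering the subdivided $K$-edge between them forces the sets of their assigned endpoints $y,y'$ to meet. These are both frame-neighbours of $z$ but need not be adjacent in $H$. You would need a third set, adjacent to both in $H$, to cover that path, and you do not provide one. So the claim that the set of $x$ meets the set of $y$ iff $xy\in E(H)$ is not established. For the sets you describe, once every edge must be covered, it fails.

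The missing idea is to absorb each deleted edge into its slider already in the first step, so that no first-step set represents it. In the paper, the first cover consists only of vertex-bags and of edge-bags of the frame $G\cap H$. A deleted edge $xy$ with slider $yz$ is subdivided into $x\,a\,b\,y$. The vertex $a$ is added to the vertex-bag of the pivot $x$, and $a,b$ are added to the edge-bag of $yz$. An emerging edge $xy$ is handled the same way, using the edge $zx\in E(G)$.

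Thus the only trace of a sliding in the first intersection graph is an edge from the pivot $x$ to the slider vertex $yz$. That slider vertex lies in both clique-bags $K^y$ and $K^z$ of the second cover. After this edge is subdivided once, its midpoint is added to the clique-bag of $x$ and to the clique-bag of the slider endpoint forming the $H$-edge with $x$, but not to the other one. That endpoint is $z$ for a deleted edge $xy$, and $y$ for an emerging edge $xy$. Every edge stays covered, and exactly the edges of $H$ appear.

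Your $K$ has no such doubly-labelled vertices at which this choice can be made. This is because you give every edge of $G$, including the deleted ones, its own set, and you route each $P_e$ through the original vertex $z$.
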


Before proving this, let us see how it completes the proof of \Tr{qi implies iter scig}.

\subsection{Proof of Theorem~\ref{qi implies iter scig}}

For the implication  \ref{qi-itercig ii} $\Rightarrow$ \ref{qi-itercig i}, recall that a bounded-diameter \cig\ of a graph $G$ is quasi-isometric to $G$ itself (\Cr{bounddiamsets}).
Hence any finite sequence of bounded-diameter $\SCIG$ operations (i.e.\ taking a \cig\ of a subdivision of a graph) preserves the quasi-isometry class.
In fact, by \Cr{bounddiamsets}, the quasi-isometry constants depend only on the number of iterations and the diameter of the sets of the cover.

\medskip

    For the implication \ref{qi-itercig i} $\Rightarrow$ \ref{qi-itercig ii},  by \Lr{VG = VH}, we have reduced to the case where two graphs $G,H$ are bi-Lipschitz equivalent or, by \Lr{lem: biL and est}, are iterated edge-sliding twins.
    In fact, it suffices to consider the case where $G,H$ are edge-sliding twins (not iterated), since the forward direction of \Lr{lem: biL and est} gives a bound on the number of iterations that depends only on the bi-Lipschitz equivalence constant $M$.
    If we assume that $G$ and $H$ are edge-sliding twins, then \Lr{lem: est implies cig} shows that $H$ can be obtained from $G$ using two $\SCIG$ operations with covers consisting of subgraphs with diameter at most 4.
    This concludes the proof. \qed

\smallskip
As mentioned in the introduction, this also completes the proof of \Tr{pl iff subd iter str}.

\subsection{Proof of \Lr{lem: est implies cig}}

We first observe some general facts about cover-intersection graphs and introduce some notation.

\begin{remark} \label{basiccase}
Every graph \g is a \cig\ of a \cig\ of itself, whereby the covers consist of subgraphs with diameter at most 1.
\end{remark}

We describe the covers used in this two-step process. For convenience, we assume \g is labelled and, abusing notation, we will not distinguish between a vertex and its label.
In the first step, \g is covered by its edges (\defi{edge-bags}) and its vertices (\defi{vertex-bags}). The resulting \cig\ $G'$ is a supergraph of the line graph of \g (which instead would result from a cover of \g consisting only of edge-bags). 
Edge-bags and vertex-bags are assigned one or two \defi{red labels}, corresponding to the vertices of \g which they enclose. The vertices of $G'$ will inherit the same red labels. By construction, vertices in $G'$ are joined by an edge if and only if they have a red label in common. Also, for every $v \in V(G)$, the vertices of $G'$ that bear $v$ as a red label induce a clique isomorphic to $K_{\deg(v)+1}$ in $G'$.
In the second step, we cover $G'$ with \defi{clique-bags} $\{K^v\}_{v \in V(G)}$, each enclosing one such clique. It is easy to check that the resulting \cig\ is isomorphic to $G$, by mapping every $v \in V(G)$ to $K^v$.

\begin{proof}[Proof of \Lr{lem: est implies cig}]    
    Starting from $G$, we will adapt the covers used in \Rr{basiccase} to obtain, after two steps, the desired edge-sliding twin $H$ of $G$: edges belonging only to $G$ will be ``absorbed'' during the process, and edges belonging only to $H$ will ``emerge'' from $G$.
    An adaptation of the procedure as in \Rr{basiccase} applied to $G \cap H$ will allow to simultaneously perform such absorption and emergence.
    
    Starting from $G=G_0$, we will construct the following sequence of labelled graphs: a subdivision $G_1$ of $G_0$; a \cig\ $G_2$ of $G_1$; a subdivision $G_3$ of $G_2$; and, finally, a \cig\ $G_4$ of $G_3$, chosen to be isomorphic to $H$.
    For each step $i$, we describe the construction of $G_i$ and its labeling:
\smallskip

    \textbf{Step 0}: To define our labeling of $G_0=G$, let $E_H = E(H) \setminus E(G)$ and $E_G = E(G) \setminus E(H)$. Recall that $V=V(G)=V(H)=V(G \cap H)$. By our assumption, there is $T: E_G \rightarrow E(H)$ that maps any edge $xy \in E_G$ to some $zx \in E(H)$ such that $yz \in E(G \cap H)$.
    Intuitively, the `redundant' edge $xy$ will be absorbed by $zx = T(xy)$, by keeping its pivot $x$ fixed and moving the other endpoint along the slider $yz$.
    Analogously, by our assumption there is $S: E_H \rightarrow E(G)$ that maps any edge $xy \in E_H$ to some $zx \in E(G)$ such that $yz \in E(G \cap H)$.
    Intuitively, the `desired' edge $xy$ will emerge from $zx = S(xy)$, by keeping its pivot $x$ fixed and moving the other endpoint along the slider $yz$.
    We attach \defi{green labels} to the edges in $E(G)$ that are involved in this absorption/emergence process: for every edge $e \in E_G$, attach $T(e)$ to it; for each $e \in E_H$, attach $e$ to $S(e)$ (\fig{fig: step 0}).
    Note that this in general leads to multiple labels being attached to an edge of $G$.
    As notation, any edge $xy$ attached to $G$ as a green label will be denoted with $(x,y)$.

    \begin{figure}[h!]
    \centering
    \begin{tikzpicture}[
        vertex/.style={circle, fill=black, inner sep=1.2pt},
        blackedge/.style={draw=black, thin},
        blackedge2/.style={draw=black, very thick},
        edgelabel/.style={fill=white, text=green!70!black, font=\scriptsize\bfseries, inner sep=1.5pt},
        redlabel/.style={text=red, font=\scriptsize\bfseries},
    ]
    
    \def\xone{-2.0}  \def\yone{-1}
    \def\xtwo{0}  \def\ytwo{-1}
    \def\xthree{2} \def\ythree{-1}
    \def\xfour{2}  \def\yfour{1}
    \def\xfive{0}  \def\yfive{1.0}
    \def\xsix{-2}   \def\ysix{1.0}
    
    \newcommand{\drawvertices}[1]{
        \node[vertex, label=below:1]  (1#1) at (\xone, \yone) {};
        \node[vertex, label=below:2] (2#1) at (\xtwo, \ytwo) {};
        \node[vertex, label=below:3] (3#1) at (\xthree, \ythree) {};
        \node[vertex, label=above:4] (4#1) at (\xfour, \yfour) {};
        \node[vertex, label=above:5] (5#1) at (\xfive, \yfive) {};
        \node[vertex, label=above:6] (6#1) at (\xsix, \ysix) {};
    }
    
    \newcommand{\drawblackedges}[1]{
        \draw[blackedge2] (1#1) -- (2#1) -- (3#1) -- (4#1) -- (5#1) -- (6#1);
        \draw[blackedge2] (2#1) -- (5#1);
    }
    
    \begin{scope}[scale=0.9]
        \node at (-3.2, -0.8) {$G=G_0$};
        
        \drawvertices{G}
        \drawblackedges{G}
    
        \draw[blackedge] (1G) -- node[edgelabel, pos=0.5, sloped] {(1,5),(2,6)} (6G);
        \draw[blackedge] (2G) -- node[edgelabel, pos=0.5, sloped] {(3,4)} (4G);
        
        \draw[blackedge] (3G) -- node[edgelabel, pos=0.5, sloped] {(3,5)} (4G);
        
    \end{scope}
    
    \begin{scope}[xshift=6cm, scale=0.9]
        \node at (-3.0, -0.8) {$H$};
        
        \drawvertices{H}
        \drawblackedges{H}
        
        \draw[blackedge] (1H) -- (5H);
        \draw[blackedge] (2H) -- (6H);
        \draw[blackedge] (3H) -- (5H);
        
    \end{scope}
    \end{tikzpicture}
    \caption{To construct $H$, we first attach green labels to some edges of $G$.}
    \label{fig: step 0}
    \end{figure}

    We record the following rather obvious observation:
    \labtequ{cl green labels}{for every $x,y \in V$, we have $xy \in E(H)$ if and only if $xy \in E(G \cap H)$ or there is a green label $(x,y)$ in $G_0$.}
    Indeed, for the backward implication, note that green labels take values in $T(E_G) \cup E_H \subseteq E(H)$.
    For the forward implication, if $xy \in E(G \cap H)$ we are done. Otherwise, $xy \in E_H$ and, by construction, $S(xy)$ carries $(x,y)$ as a green label.

    \medskip

    \textbf{Step 1}: Construct $G_1$ by subdividing each edge of $G$ having at least one green label into a path on $3$ edges (\fig{fig: step 1}). Each $3$-path obtained by subdividing an edge $e$ inherits all the green labels of $e$.    
    Let $M$ be the subgraph of $G_1$  spanned by the edges of $G \cap H$ and any 3-paths that arose by subdividing edges of $G\cap H$. (Thus $M$ is a subdivision of $G \cap H$.)

    \begin{figure}[h!]
    \centering
    \begin{tikzpicture}[
        vertex/.style={circle, fill=black, inner sep=1.2pt},
        blackedge/.style={draw=black, thin},
        blackedge2/.style={draw=black, very thick},
        edgelabel/.style={fill=white, text=green!70!black, font=\scriptsize\bfseries, inner sep=1.5pt},
        redlabel/.style={text=red, font=\scriptsize\bfseries},
    ]
    
    \def\xone{-2.0}  \def\yone{-1}
    \def\xtwo{0}  \def\ytwo{-1}
    \def\xthree{2} \def\ythree{-1}
    \def\xfour{2}  \def\yfour{1}
    \def\xfive{0}  \def\yfive{1.0}
    \def\xsix{-2}   \def\ysix{1.0}
    
    \newcommand{\drawvertices}[1]{
        \node[vertex, label=below:1]  (1#1) at (\xone, \yone) {};
        \node[vertex, label=below:2] (2#1) at (\xtwo, \ytwo) {};
        \node[vertex, label=below:3] (3#1) at (\xthree, \ythree) {};
        \node[vertex, label=above:4] (4#1) at (\xfour, \yfour) {};
        \node[vertex, label=above:5] (5#1) at (\xfive, \yfive) {};
        \node[vertex, label=above:6] (6#1) at (\xsix, \ysix) {};
    }
    
    \newcommand{\drawblackedges}[1]{
        \draw[blackedge2] (1#1) -- (2#1) -- (3#1) -- (4#1) -- (5#1) -- (6#1);
        \draw[blackedge2] (2#1) -- (5#1);
    }
    
    \begin{scope}[yshift=-12cm, scale=0.9]
    
        \node at (-3.0, -0.8) {$G_1$};
        
        \drawvertices{G}
        \drawblackedges{G}
    
        \draw[blackedge] (1G) -- node[pos=0.33, vertex] {} node[pos=0.67, vertex] {} node[edgelabel, pos=0.5, sloped, below=1.5pt]{(1,5),(2,6)} (6G);
        \draw[blackedge] (2G) -- node[pos=0.33, vertex] {} node[pos=0.67, vertex] {} node[edgelabel, pos=0.5, sloped, below=1.5pt] {(3,4)} (4G);
        
        \draw[blackedge] (3G) -- node[pos=0.33, vertex] {} node[pos=0.67, vertex] {} node[edgelabel, pos=0.5, sloped, above=1.5pt] {(3,5)} (4G);
        
    \end{scope}
    \end{tikzpicture}
    \caption{$G_1$ is a subdivision of $G_0$}
    \label{fig: step 1}
    \end{figure}
    
    \medskip

    \textbf{Step 2}: Let $\cc_1$ be the cover of $M$ induced by the cover of $G \cap H$ given by edge-bags and vertex-bags (\fig{fig: step 2a} (left)), which, with a slight abuse, we will still call the \defi{edge-bags} and \defi{vertex-bags} of $\cc_1$. Let $A$ be the intersection graph of $\cc_1$ (\fig{fig: step 2a} (right)).

    \begin{figure}[h!]
    \centering
    
    \begin{tikzpicture}[
        vertex/.style={circle, fill=black, inner sep=1.2pt},
        blackedge/.style={draw=black, thin},
        blackedge2/.style={draw=black, very thick},
        edgelabel/.style={fill=white, text=green!70!black, font=\scriptsize\bfseries, inner sep=1.5pt},
        redlabel/.style={text=red, font=\scriptsize\bfseries},
        vertexbag/.style={
            circle, 
            draw=red, 
            thick, 
            fill=none,
            /utils/exec={\pgfgettransformentries{\myscale}{\tmp}{\tmp}{\tmp}{\tmp}{\tmp}},
            inner sep={\myscale*5pt}
        },
        edgebag/.style={
            to path={
                let \p1=($(\tikztotarget.center)-(\tikztostart.center)$),
                    \n1={veclen(\x1,\y1)} in
                \pgfextra{
                    \pgfgettransformentries{\myscale}{\tmp}{\tmp}{\tmp}{\tmp}{\tmp}
                }
                (\tikztostart.center) -- (\tikztotarget.center) 
                node[
                    midway, sloped, draw=red, thick, rounded rectangle, 
                    minimum width={\myscale*\n1 + \myscale*10pt}, 
                    minimum height={\myscale*10pt}, 
                    inner sep=0pt
                ] {}
                \tikztonodes
            }
        },
        ]
        
        \def\xone{-2.0}  \def\yone{-1}
        \def\xtwo{0}  \def\ytwo{-1}
        \def\xthree{2} \def\ythree{-1}
        \def\xfour{2}  \def\yfour{1}
        \def\xfive{0}  \def\yfive{1.0}
        \def\xsix{-2}   \def\ysix{1.0}
        
        \newcommand{\drawvertices}[1]{
            \node[vertex, label=below:1]  (1#1) at (\xone, \yone) {};
            \node[vertex, label=below:2] (2#1) at (\xtwo, \ytwo) {};
            \node[vertex, label=below:3] (3#1) at (\xthree, \ythree) {};
            \node[vertex, label=above:4] (4#1) at (\xfour, \yfour) {};
            \node[vertex, label=above:5] (5#1) at (\xfive, \yfive) {};
            \node[vertex, label=above:6] (6#1) at (\xsix, \ysix) {};
        }
        
        \newcommand{\drawblackedges}[1]{
            \draw[blackedge2] (1#1) -- (2#1) -- (3#1) -- (4#1) -- (5#1) -- (6#1);
            \draw[blackedge2] (2#1) -- (5#1);
        }
            
        \begin{scope}[scale=0.9]
        
            \node at (-3.0, -0.8) {$G_1$};
        
            \drawvertices{G}
            \drawblackedges{G}
        
            \draw[blackedge] (1G) -- node[pos=0.33, vertex] {} node[pos=0.67, vertex] {} node[edgelabel, pos=0.5, sloped, below=1.5pt]{(1,5),(2,6)} (6G);
            \draw[blackedge] (2G) -- node[pos=0.33, vertex] {} node[pos=0.67, vertex] {} node[edgelabel, pos=0.5, sloped, below=1.5pt] {(3,4)} (4G);
            
            \draw[blackedge] (3G) -- node[pos=0.33, vertex] {} node[pos=0.67, vertex] {} node[edgelabel, pos=0.5, sloped, above=1.5pt] {(3,5)} (4G);

            \node[vertexbag] at (1G) {};
            \node[vertexbag] at (2G) {};
            \node[vertexbag] at (3G) {};
            \node[vertexbag] at (4G) {};
            \node[vertexbag] at (5G) {};
            \node[vertexbag] at (6G) {};

            \draw (1G) to[edgebag] (2G);
            \draw (2G) to[edgebag] (3G);
            \draw (3G) to[edgebag] (4G);
            \draw (4G) to[edgebag] (5G);
            \draw (5G) to[edgebag] (6G);
            \draw (2G) to[edgebag] (5G);
            
        \end{scope}

        \begin{scope}[xshift=6cm, scale=0.4]
        \node at (-6, -3) {$A$};
        
        \node[vertex, label={[redlabel]0:2,5}] (25) at (0, 0) {};
        \node[vertex, label={[redlabel]90:5}]  (5)  at (0, 1.5) {};
        \node[vertex, label={[redlabel]270:2}]  (2)  at (0, -1.5) {};
        
        \node[vertex, label={[redlabel]90:5,6}]  (56) at (-2, 3) {};
        \node[vertex, label={[redlabel]90:4,5}]  (45) at (2, 3) {};
        \node[vertex, label={[redlabel]270:1,2}] (12) at (-2, -3) {};
        \node[vertex, label={[redlabel]270:2,3}] (23) at (2, -3) {};
        
        \node[vertex, label={[redlabel]135:6}] (6) at (-4, 1.5) {};
        \node[vertex, label={[redlabel]225:1}] (1) at (-4, -1.5) {};
        
        \node[vertex, label={[redlabel]90:4}]   (4)  at (4, 1.5) {};
        \node[vertex, label={[redlabel]270:3}]  (3)  at (4, -1.5) {};
        \node[vertex, label={[redlabel]0:3,4}] (34) at (4, 0) {};
        
        \draw[blackedge2] (56) -- (45);
        \draw[blackedge2] (56) -- (5);
        \draw[blackedge2] (45) -- (5);
        \draw[blackedge2] (56) -- (25);
        \draw[blackedge2] (45) -- (25);
        \draw[blackedge2] (5) -- (25);
        
        \draw[blackedge2] (12) -- (23);
        \draw[blackedge2] (12) -- (2);
        \draw[blackedge2] (23) -- (2);
        \draw[blackedge2] (12) -- (25);
        \draw[blackedge2] (23) -- (25);
        \draw[blackedge2] (2) -- (25);
        
        \draw[blackedge2] (45) -- (4);
        \draw[blackedge2] (45) -- (34);
        \draw[blackedge2] (4) -- (34);
        
        \draw[blackedge2] (23) -- (3);
        \draw[blackedge2] (23) -- (34);
        \draw[blackedge2] (3) -- (34);
        
        \draw[blackedge2] (6) -- (56);
        \draw[blackedge2] (1) -- (12);
        
        \end{scope}
    \end{tikzpicture}
    \caption{The subgraphs of $G_1$ in $\cc_1$ induce $A$ as \cig.}
    \label{fig: step 2a}

    \end{figure}

    We will extend some of the sets of $\cc_1$ to obtain a cover of $G_1$. The resulting \cig\ $G_2$ will be spanned by $A$. We describe how to extend the sets.
    As in \Rr{basiccase}, we assume that edge-bags and vertex-bags bear two or one \defi{red labels} respectively, namely the vertices they contain. For every green label $(x,y)$, attached to some $zx$-path $P$ of length $3$ where $yz \in E(G \cap H)$, proceed as follows: extend the vertex-bag labeled with the pivot $x$ to include an additional vertex of $P$, and extend the edge-bag labeled with the slider $yz$ to include two additional vertices of $P$ (\fig{fig: step 2b} (left)). We remark that for this extension it does not matter whether we slide $xy$ to $zx$ or the other way round. This process results in a cover of $G_1$ and we denote with $G_2$ the resulting \cig\ (\fig{fig: step 2b} (right)).
    
    \begin{figure}[h!]
    \centering
    \begin{tikzpicture}[
        vertex/.style={circle, fill=black, inner sep=1.2pt},
        blackedge/.style={draw=black, thin},
        blackedge2/.style={draw=black, very thick},
        edgelabel/.style={fill=white, text=green!70!black, font=\scriptsize\bfseries, inner sep=1.5pt},
        redlabel/.style={text=red, font=\scriptsize\bfseries},
        vertexbag/.style={
            circle, 
            draw=red, 
            thick, 
            fill=none,
            /utils/exec={\pgfgettransformentries{\myscale}{\tmp}{\tmp}{\tmp}{\tmp}{\tmp}},
            inner sep={\myscale*5pt}
        },
        edgebag/.style={
            to path={
                let \p1=($(\tikztotarget.center)-(\tikztostart.center)$),
                    \n1={veclen(\x1,\y1)} in
                \pgfextra{
                    \pgfgettransformentries{\myscale}{\tmp}{\tmp}{\tmp}{\tmp}{\tmp}
                }
                (\tikztostart.center) -- (\tikztotarget.center) 
                node[
                    midway, sloped, draw=red, thick, rounded rectangle, 
                    minimum width={\myscale*\n1 + \myscale*10pt}, 
                    minimum height={\myscale*10pt}, 
                    inner sep=0pt
                ] {}
                \tikztonodes
            }
        },
        vertexbag_enlarged/.style={
        to path={
            let \p1=($(\tikztotarget.center)-(\tikztostart.center)$),
                \n1={veclen(\x1,\y1)} in
            \pgfextra{
                \pgfgettransformentries{\myscale}{\tmp}{\tmp}{\tmp}{\tmp}{\tmp}
            }
            (\tikztostart.center) -- (\tikztotarget.center) 
            node[
                midway, sloped, draw=red, thick, rounded rectangle, 
                minimum width={\myscale*\n1 + \myscale*14pt}, 
                minimum height={\myscale*14pt}, 
                inner sep=0pt
            ] {}
            \tikztonodes
        }
    },
    ]
        
        \def\xone{-2.0}  \def\yone{-1}
        \def\xtwo{0}  \def\ytwo{-1}
        \def\xthree{2} \def\ythree{-1}
        \def\xfour{2}  \def\yfour{1}
        \def\xfive{0}  \def\yfive{1.0}
        \def\xsix{-2}   \def\ysix{1.0}
        
        \newcommand{\drawvertices}[1]{
            \node[vertex, label=below:1]  (1#1) at (\xone, \yone) {};
            \node[vertex, label=below:2] (2#1) at (\xtwo, \ytwo) {};
            \node[vertex, label=below:3] (3#1) at (\xthree, \ythree) {};
            \node[vertex, label=above:4] (4#1) at (\xfour, \yfour) {};
            \node[vertex, label=above:5] (5#1) at (\xfive, \yfive) {};
            \node[vertex, label=above:6] (6#1) at (\xsix, \ysix) {};
        }
        
        \newcommand{\drawblackedges}[1]{
            \draw[blackedge2] (1#1) -- (2#1) -- (3#1) -- (4#1) -- (5#1) -- (6#1);
            \draw[blackedge2] (2#1) -- (5#1);
        }
            
        \begin{scope}[scale=0.9]
        
            \node at (-3.0, -0.8) {$G_1$};
        
            \drawvertices{G}
            \drawblackedges{G}
        
            \draw[blackedge] (1G) -- node[pos=0.33, vertex] (1GA) {} node[pos=0.67, vertex] (1GB) {} node[edgelabel, pos=0.5, sloped, below=1.5pt] {(1,5),(2,6)} (6G);
            \draw[blackedge] (2G) -- node[pos=0.33, vertex] (2GA) {} node[pos=0.67, vertex] (2GB) {} node[edgelabel, pos=0.5, sloped, below=1.5pt] {(3,4)} (4G);
            
            \draw[blackedge] (3G) -- node[pos=0.33, vertex] (3GA) {} node[pos=0.67, vertex] (3GB) {} node[edgelabel, pos=0.5, sloped, above=1.5pt] {(3,5)} (4G);

            \node[vertexbag] at (2G) {};
            \node[vertexbag] at (5G) {};

            \draw (3G) to[edgebag] (4G);
            \draw (2G) to[edgebag] (5G);

            \draw (1G) to[vertexbag_enlarged] (1GA);
            \draw (6G) to[vertexbag_enlarged] (1GB);
            \draw (4G) to[vertexbag_enlarged] (2GB);
            \draw (3G) to[vertexbag_enlarged] (3GA);

            \def\bp{0.18}
            \draw[draw=red, thick, rounded corners=6pt] 
                ($(5G) + (\bp, \bp)$) --    
                ($(6G) + (-\bp, \bp)$) --   
                ($(1GA) + (-\bp, -\bp)$) -- 
                ($(1GA) + (\bp, -\bp)$) --  
                ($(6G) + (\bp, -\bp)$) --   
                ($(5G) + (\bp, -\bp)$) --   
                cycle;

            \draw[draw=red, thick, rounded corners=6pt] 
                ($(1GB) + (-\bp, \bp)$) --   
                ($(1G)  + (-\bp, -\bp)$) --  
                ($(2G)  + (\bp, -\bp)$) --  
                ($(2G)  + (\bp, \bp)$) --   
                ($(1G)  + (\bp, \bp)$) --    
                ($(1GB) + (\bp, \bp)$) --    
                cycle;
        
            \draw[draw=red, thick, rounded corners=6pt] 
                ($(5G) + (-\bp, \bp)$) --    %
                ($(4G) + (\bp, \bp)$) --   %
                ($(3GA) + (\bp, -\bp)$) -- %
                ($(3GA) + (-\bp, -\bp)$) -- 
                ($(4G) + (-\bp, -\bp)$) --   %
                ($(5G) + (-\bp, -\bp)$) --   %
                cycle;

            \def\eps{0.09}
            \draw[draw=red, thick, rounded corners=6pt]  
                ($(2G) + (-1.4142 /2 * \bp - \eps, 1.4142 /2  * \bp -\eps)$) --    %
                ($(2GB) + (0, 1.4142 * \bp)$) --    %
                ($(2GB) + (1.4142 * \bp, 0)$) --    %
                ($(2G) + (2.4142 * \bp, \bp)$) --    %
                ($(3G) + (\bp, \bp)$) --    %
                ($(3G) + (\bp, -\bp)$) --    %
                ($(2G) + (-\eps *1.4142, -\bp)$) --    %
                cycle;
                    
        \end{scope}
        
        \begin{scope}[xshift=6cm, scale=0.4]
        
        \node at (-6, -3) {$G_2$};
        
        \node[vertex, label={[redlabel]0:2,5}] (25) at (0, 0) {};
        \node[vertex, label={[redlabel]90:5}]  (5)  at (0, 1.5) {};
        \node[vertex, label={[redlabel]270:2}]  (2)  at (0, -1.5) {};
        
        \node[vertex, label={[redlabel]90:5,6}]  (56) at (-2, 3) {};
        \node[vertex, label={[redlabel]90:4,5}]  (45) at (2, 3) {};
        \node[vertex, label={[redlabel]270:1,2}] (12) at (-2, -3) {};
        \node[vertex, label={[redlabel]270:2,3}] (23) at (2, -3) {};
        
        \node[vertex, label={[redlabel]135:6}] (6) at (-4, 1.5) {};
        \node[vertex, label={[redlabel]225:1}] (1) at (-4, -1.5) {};
        
        \node[vertex, label={[redlabel]90:4}]   (4)  at (4, 1.5) {};
        \node[vertex, label={[redlabel]270:3}]  (3)  at (4, -1.5) {};
        \node[vertex, label={[redlabel]0:3,4}] (34) at (4, 0) {};
        
        \draw[blackedge2] (56) -- (45);
        \draw[blackedge2] (56) -- (5);
        \draw[blackedge2] (45) -- (5);
        \draw[blackedge2] (56) -- (25);
        \draw[blackedge2] (45) -- (25);
        \draw[blackedge2] (5) -- (25);
        
        \draw[blackedge2] (12) -- (23);
        \draw[blackedge2] (12) -- (2);
        \draw[blackedge2] (23) -- (2);
        \draw[blackedge2] (12) -- (25);
        \draw[blackedge2] (23) -- (25);
        \draw[blackedge2] (2) -- (25);
        
        \draw[blackedge2] (45) -- (4);
        \draw[blackedge2] (45) -- (34);
        \draw[blackedge2] (4) -- (34);
        
        \draw[blackedge2] (23) -- (3);
        \draw[blackedge2] (23) -- (34);
        \draw[blackedge2] (3) -- (34);
        
        \draw[blackedge2] (6) -- (56);
        \draw[blackedge2] (1) -- (12);
        
        
        \draw[blackedge] (6) -- node[edgelabel, pos=0.6, sloped, above=1pt] {(2,6)}(12);
        \draw[blackedge] (1) -- node[edgelabel, pos=0.6, sloped, below=1pt] {(1,5)}(56);
        \draw[blackedge] (12) -- node[edgelabel, pos=0.5, sloped, below=1pt] {(1,5)}(56);
        
        \draw[blackedge] (3) -- node[edgelabel, pos=0.6, sloped, below=1pt] {(3,5)}(45);
        \draw[blackedge] (4) -- node[edgelabel, pos=0.6, sloped, above=1pt] {(3,4)}(23);
        \end{scope}
    \end{tikzpicture}
    \caption{$G_1$ is fully covered and $G_2$ is the resulting \cig.}
    \label{fig: step 2b}
    \end{figure}
    
\medskip
    We now assign to each edge in $E(G_2) \setminus E(A)$ a single green label.
    While defining this assignment, we will verify that it satisfies the following two properties:
    \labtequ{cl G1 G2 gr lab first}{if there is a green label $(z,x)$ in $G_2$, then there is a green label $(z,x)$ in $G_1$,}
    and
    \labtequ{cl endpoints}{if an edge $e \in E(G_2)$ bears a green label $(z,x)$, one of its endpoints contains $z$ in its red label, and the other contains $x$.}
    Edges in $E(G_2) \setminus E(A)$ can be of two different types.
    The first type are those edges $e \in E(G_2) \setminus E(A)$ joining a pivot $x$ (vertex of $A$ with a single red label) and a slider $yz$ (vertex of $A$ with two red labels).
    Without loss of generality we can assume $xy \in E(G)$ and $zx \in E(H)$ and hence that $(z,x)$ is a green label in $G_0$ (attached to $xy$) also appearing in $G_1$.
    To such an edge $e$, we attach the green label $(z,x)$.
    As desired, properties \eqref{cl G1 G2 gr lab first} and \eqref{cl endpoints} are both satisfied.
    The second type are those edges $e \in E(G_2) \setminus E(A)$ joining two sliders $xy$ and $zt$, where $x,y,z,t \in V$ are distinct red labels.
    This happens exactly when some edge $f$ of $G$ slid along both $xy$ and $zt$. Suppose ---without loss of generality--- that $f$ contains $z$. Then the edge-bag $xy$ was enlarged to intersect the vertex-bag $z$ and there is an edge $e'$ in $E(G_2) \setminus E(A)$ also between $xy$ and $z$.
    We assign to $e$ the same green label as that of $e'$ (this will be either $(z,x)$ or $(y,z)$).
    Note that in both cases property \eqref{cl endpoints} is satisfied.
    Also, by the previous case, the green label attached to $e'$ satisfies \eqref{cl G1 G2 gr lab first}, hence also the same green label attached to $e$ does.

    This concludes the assignment of green labels to $G_2$. We can now strengthen \eqref{cl G1 G2 gr lab first} into
    \labtequ{cl G1 G2 gr lab}{there is a green label $(z,x)$ in $G_2$ if and only if there is a green label $(z,x)$ in $G_1$.}
    Indeed, the forward implication is \eqref{cl G1 G2 gr lab first}.
    For the other direction, note that the assignment of green labels to $G_2$ involves a choice, but every green label of $G_1$ will be assigned to some edge of $E(G_2)$ of the first type (this is the reason why we included vertex-bags in the cover $\cc_1$).
    More precisely, any green label $(z,x)$ appearing in $G_1$ is attached to some edge $xy \in G_0$ and, with this assignment, it is attached in $G_2$ to the edge with endpoints $x$ and $yz$.
    Note that it cannot happen that $(z,x)$ is attached to $xy$ in $G_0$ and at the same time $(x,y)$ is attached to $zx$, as it would imply that both $xy$ and $zx$ belong to $E(G \cap H)$.
    \medskip

    \textbf{Step 3}: We obtain $G_3$ from $G_2$ by subdividing every edge having a green label. Green and red labels pass on naturally to $G_3$ (green labels are now assigned to paths of length $2$; see \fig{fig: step 3}).
    Note that $A$ is a subgraph of $G_3$.
    \medskip

    \begin{figure}[h!]
        \centering
        \begin{tikzpicture}[
        vertex/.style={circle, fill=black, inner sep=1.2pt},
        blackedge/.style={draw=black, thin},
        blackedge2/.style={draw=black, very thick},
        edgelabel/.style={fill=white, text=green!70!black, font=\scriptsize\bfseries, inner sep=1.5pt},
        redlabel/.style={text=red, font=\scriptsize\bfseries},
        ]
            
        \begin{scope}[scale=0.4]
        
        \node at (-6, -3) {$G_3$};
        
        \node[vertex, label={[redlabel]0:2,5}] (25) at (0, 0) {};
        \node[vertex, label={[redlabel]90:5}]  (5)  at (0, 1.5) {};
        \node[vertex, label={[redlabel]270:2}]  (2)  at (0, -1.5) {};
        
        \node[vertex, label={[redlabel]90:5,6}]  (56) at (-2, 3) {};
        \node[vertex, label={[redlabel]90:4,5}]  (45) at (2, 3) {};
        \node[vertex, label={[redlabel]270:1,2}] (12) at (-2, -3) {};
        \node[vertex, label={[redlabel]270:2,3}] (23) at (2, -3) {};
        
        \node[vertex, label={[redlabel]135:6}] (6) at (-4, 1.5) {};
        \node[vertex, label={[redlabel]225:1}] (1) at (-4, -1.5) {};
        
        \node[vertex, label={[redlabel]90:4}]   (4)  at (4, 1.5) {};
        \node[vertex, label={[redlabel]270:3}]  (3)  at (4, -1.5) {};
        \node[vertex, label={[redlabel]0:3,4}] (34) at (4, 0) {};
        
        \draw[blackedge2] (56) -- (45);
        \draw[blackedge2] (56) -- (5);
        \draw[blackedge2] (45) -- (5);
        \draw[blackedge2] (56) -- (25);
        \draw[blackedge2] (45) -- (25);
        \draw[blackedge2] (5) -- (25);
        
        \draw[blackedge2] (12) -- (23);
        \draw[blackedge2] (12) -- (2);
        \draw[blackedge2] (23) -- (2);
        \draw[blackedge2] (12) -- (25);
        \draw[blackedge2] (23) -- (25);
        \draw[blackedge2] (2) -- (25);
        
        \draw[blackedge2] (45) -- (4);
        \draw[blackedge2] (45) -- (34);
        \draw[blackedge2] (4) -- (34);
        
        \draw[blackedge2] (23) -- (3);
        \draw[blackedge2] (23) -- (34);
        \draw[blackedge2] (3) -- (34);
        
        \draw[blackedge2] (6) -- (56);
        \draw[blackedge2] (1) -- (12);
        
        \draw[blackedge] (6) -- node[pos=0.5, vertex] {} node[edgelabel, pos=0.6, sloped, above=2pt] {(2,6)} (12);
        \draw[blackedge] (1) -- node[pos=0.5, vertex] {} node[edgelabel, pos=0.6, sloped, below=2pt] {(1,5)} (56);
        \draw[blackedge] (12) -- node[pos=0.5, vertex] {} node[edgelabel, pos=0.5, sloped, below=2pt] {(1,5)} (56);
        
        \draw[blackedge] (3) -- node[pos=0.5, vertex] {} node[edgelabel, pos=0.6, sloped, below=2pt] {(3,5)} (45);
        \draw[blackedge] (4) -- node[pos=0.5, vertex] {} node[edgelabel, pos=0.6, sloped, above=2pt] {(3,4)} (23);

        \end{scope}
        \end{tikzpicture}
        \caption{$G_3$ is a subdivision of $G_2$.}
        \label{fig: step 3}
    \end{figure}

    \textbf{Step 4}: We now cover $A$ with clique-bags $\{K^v\}_{v \in V}$, where each $K^v$ is the clique induced by the vertices of $A$ bearing $v$ as a red label (\fig{fig: step 4} (left)). Let $B$ be the resulting intersection graph. As in \Rr{basiccase}, we have $B \isom G \cap H$, via the map that sends any $v \in V$ to $K^v$. We extend these clique-bags to cover $G_3$ with \defi{extended clique-bags $\{\bar K^v\}_{v \in V}$}: for every $2$-path $P$ in $G_3$ with a green label $(x,y)$, extend each of $K^x$ and $K^y$ to include the middle point of $P$ (\fig{fig: step 4} (right)).
    This operation is possible because, by \eqref{cl endpoints}, the clique-bags $K^x$ and $K^y$ include one endpoint of $P$ each, and so $\bar K^x, \bar K^y$ remain connected.
    This yields a cover of $G_3$ and we let $G_4$ be the resulting \cig.

    \begin{figure}[h!]
    \centering
    
    \begin{tikzpicture}[
        vertex/.style={circle, fill=black, inner sep=1.2pt},
        blackedge/.style={draw=black, thin},
        blackedge2/.style={draw=black, very thick},
        edgelabel/.style={fill=white, text=green!70!black, font=\scriptsize\bfseries, inner sep=1.5pt},
        redlabel/.style={text=red, font=\scriptsize\bfseries},
    ]  
        \begin{scope}[scale=0.4]
        \node at (-6, -3) {$G_3$};
        
        \node[vertex, label={[redlabel]0:2,5}] (25) at (0, 0) {};
        \node[vertex, label={[redlabel]90:5}]  (5)  at (0, 1.5) {};
        \node[vertex, label={[redlabel]270:2}]  (2)  at (0, -1.5) {};
        
        \node[vertex, label={[redlabel]90:5,6}]  (56) at (-2, 3) {};
        \node[vertex, label={[redlabel]90:4,5}]  (45) at (2, 3) {};
        \node[vertex, label={[redlabel]270:1,2}] (12) at (-2, -3) {};
        \node[vertex, label={[redlabel]270:2,3}] (23) at (2, -3) {};
        
        \node[vertex, label={[redlabel]135:6}] (6) at (-4, 1.5) {};
        \node[vertex, label={[redlabel]225:1}] (1) at (-4, -1.5) {};
        
        \node[vertex, label={[redlabel]90:4}]   (4)  at (4, 1.5) {};
        \node[vertex, label={[redlabel]270:3}]  (3)  at (4, -1.5) {};
        \node[vertex, label={[redlabel]0:3,4}] (34) at (4, 0) {};
        

        \draw[draw=red, thick, rounded corners=2.5pt]
            ($(1)  + (-0.25, 0)$) --
            ($(1)  + (0, 0.25)$) --
            ($(12) + (0, 0.25)$) --
            ($(12) + (0.25, 0)$) --
            ($(12) + (0, -0.25)$) --
            ($(1)  + (0, -0.25)$) -- cycle;
        
        \draw[draw=red, thick, rounded corners=2.5pt]
            ($(6)  + (-0.25, 0)$) --
            ($(6)  + (0, -0.25)$) --
            ($(56) + (0, -0.25)$) --
            ($(56) + (0.25, 0)$) --
            ($(56) + (0, 0.25)$) --
            ($(6)  + (0, 0.25)$) -- cycle;
        
        \draw[draw=red, thick, rounded corners=2.5pt]
            ($(25) + (0, -0.25)$) -- 
            ($(25) + (-0.25, 0)$) -- 
            ($(56) + (-0.25, 0)$) --
            ($(56) + (0, 0.25)$) --
            ($(45) + (0, 0.25)$) --
            ($(45) + (0.25, 0)$) --
            ($(25) + (0.25, 0)$) -- cycle;
        
        \draw[draw=red, thick, rounded corners=2.5pt]
            ($(25) + (-0.25, 0)$) --
            ($(12) + (-0.25, 0)$) --
            ($(12) + (0, -0.25)$) --
            ($(23) + (0, -0.25)$) --
            ($(23) + (0.25, 0)$) --
            ($(25) + (0.25, 0)$) --
            ($(25) + (0, 0.25)$) -- cycle;
        
        \draw[draw=red, thick, rounded corners=2.5pt]
            ($(45) + (0, 0.25)$) --
            ($(4)  + (0, 0.25)$) --
            ($(4)  + (0.25, 0)$) --
            ($(34) + (0.25, 0)$) --
            ($(34) + (0, -0.25)$) --
            ($(34) + (-0.25, 0)$) --
            ($(45) + (-0.25, 0)$) -- cycle;

        \draw[draw=red, thick, rounded corners=2.5pt]
            ($(23) + (0, -0.25)$) --
            ($(3)  + (0, -0.25)$) --
            ($(3)  + (0.25, 0)$) --
            ($(34) + (0.25, 0)$) --
            ($(34) + (0, 0.25)$) --
            ($(34) + (-0.25, 0)$) --
            ($(23) + (-0.25, 0)$) -- cycle;

        
        \draw[blackedge2] (56) -- (45);
        \draw[blackedge2] (56) -- (5);
        \draw[blackedge2] (45) -- (5);
        \draw[blackedge2] (56) -- (25);
        \draw[blackedge2] (45) -- (25);
        \draw[blackedge2] (5) -- (25);
        
        \draw[blackedge2] (12) -- (23);
        \draw[blackedge2] (12) -- (2);
        \draw[blackedge2] (23) -- (2);
        \draw[blackedge2] (12) -- (25);
        \draw[blackedge2] (23) -- (25);
        \draw[blackedge2] (2) -- (25);
        
        \draw[blackedge2] (45) -- (4);
        \draw[blackedge2] (45) -- (34);
        \draw[blackedge2] (4) -- (34);
        
        \draw[blackedge2] (23) -- (3);
        \draw[blackedge2] (23) -- (34);
        \draw[blackedge2] (3) -- (34);
        
        \draw[blackedge2] (6) -- (56);
        \draw[blackedge2] (1) -- (12);
        
        \draw[blackedge] (6) -- node[pos=0.5, vertex] {} node[edgelabel, pos=0.6, sloped, above=2pt] {(2,6)} (12);
        \draw[blackedge] (1) -- node[pos=0.5, vertex] {} node[edgelabel, pos=0.6, sloped, below=2pt] {(1,5)} (56);
        \draw[blackedge] (12) -- node[pos=0.5, vertex] {} node[edgelabel, pos=0.5, sloped, below=2pt] {(1,5)} (56);
        
        \draw[blackedge] (3) -- node[pos=0.5, vertex] {} node[edgelabel, pos=0.6, sloped, below=2pt] {(3,5)} (45);
        \draw[blackedge] (4) -- node[pos=0.5, vertex] {} node[edgelabel, pos=0.6, sloped, above=2pt] {(3,4)} (23);
        
        \end{scope}
        
        \begin{scope}[xshift=6cm, scale=0.4]
        \node at (-6, -3) {$G_3$};
        
        \node[vertex, label={[redlabel]0:2,5}] (25) at (0, 0) {};
        \node[vertex, label={[redlabel]90:5}]  (5)  at (0, 1.5) {};
        \node[vertex, label={[redlabel]270:2}]  (2)  at (0, -1.5) {};
        
        \node[vertex, label={[redlabel]90:5,6}]  (56) at (-2, 3) {};
        \node[vertex, label={[redlabel]90:4,5}]  (45) at (2, 3) {};
        \node[vertex, label={[redlabel]270:1,2}] (12) at (-2, -3) {};
        \node[vertex, label={[redlabel]270:2,3}] (23) at (2, -3) {};
        
        \node[vertex, label={[redlabel]135:6}] (6) at (-4, 1.5) {};
        \node[vertex, label={[redlabel]225:1}] (1) at (-4, -1.5) {};
        
        \node[vertex, label={[redlabel]90:4}]   (4)  at (4, 1.5) {};
        \node[vertex, label={[redlabel]270:3}]  (3)  at (4, -1.5) {};
        \node[vertex, label={[redlabel]0:3,4}] (34) at (4, 0) {};

        
        \draw[blackedge2] (56) -- (45);
        \draw[blackedge2] (56) -- (5);
        \draw[blackedge2] (45) -- (5);
        \draw[blackedge2] (56) -- (25);
        \draw[blackedge2] (45) -- (25);
        \draw[blackedge2] (5) -- (25);
        
        \draw[blackedge2] (12) -- (23);
        \draw[blackedge2] (12) -- (2);
        \draw[blackedge2] (23) -- (2);
        \draw[blackedge2] (12) -- (25);
        \draw[blackedge2] (23) -- (25);
        \draw[blackedge2] (2) -- (25);
        
        \draw[blackedge2] (45) -- (4);
        \draw[blackedge2] (45) -- (34);
        \draw[blackedge2] (4) -- (34);
        
        \draw[blackedge2] (23) -- (3);
        \draw[blackedge2] (23) -- (34);
        \draw[blackedge2] (3) -- (34);
        
        \draw[blackedge2] (6) -- (56);
        \draw[blackedge2] (1) -- (12);
        
        \draw[blackedge] (6) -- node[pos=0.5, vertex] (6A) {} node[edgelabel, pos=0.6, sloped, above=2pt] {(2,6)} (12);
        \draw[blackedge] (1) -- node[pos=0.5, vertex] (1A) {} node[edgelabel, pos=0.6, sloped, below=2pt] {(1,5)} (56);
        \draw[blackedge] (12) -- node[pos=0.5, vertex] (12A) {} node[edgelabel, pos=0.5, sloped, below=2pt] {(1,5)} (56);
        
        \draw[blackedge] (3) -- node[pos=0.5, vertex] (3A) {} node[edgelabel, pos=0.6, sloped, below=2pt] {(3,5)} (45);
        \draw[blackedge] (4) -- node[pos=0.5, vertex] (4A) {} node[edgelabel, pos=0.6, sloped, above=2pt] {(3,4)} (23);
        
        
        \draw[draw=red, thick, rounded corners=2.5pt]
            ($(1)  + (-0.25, 0)$) --
            ($(1A)  + (-0.25, 0)$) --
            ($(1A)  + (0, 0.25)$) --
            ($(12A)  + (0, 0.25)$) --
            ($(12A)  + (0.25, 0)$) --
            ($(12) + (0.25, 0)$) --
            ($(12) + (0, -0.25)$) --
            ($(1)  + (0, -0.25)$) -- cycle;
        
        \draw[draw=red, thick, rounded corners=2.5pt]
            ($(6)  + (-0.25, 0)$) --
            ($(6A)  + (-0.25, 0)$) --
            ($(6A)  + (0, -0.25)$) --
            ($(6A)  + (0.25, 0)$) --
            ($(6)  + (0.25, 0)$) --
            ($(56) + (0.25, 0)$) --
            ($(56) + (0, 0.25)$) --
            ($(6)  + (0, 0.25)$) -- cycle;
        
        \draw[draw=red, thick, rounded corners=2.5pt]
            ($(12A) + (0, -0.25)$) --
            ($(1A) + (0, -0.25)$) --
            ($(1A) + (-0.25, 0)$) --
            ($(56) + (-0.25, 0)$) --
            ($(56) + (0, 0.25)$) --
            ($(45) + (0, 0.25)$) --
            ($(45) + (0.25, 0)$) --
            ($(3A) + (0.25, 0)$) --
            ($(3A) + (0, -0.25)$) --
            ($(25) + (0, -0.25)$) -- cycle;
        
        \draw[draw=red, thick, rounded corners=2.5pt]
            ($(6A) + (0, 0.25)$) --
            ($(6A) + (-0.25, 0)$) --
            ($(12) + (-0.25, 0)$) --
            ($(12) + (0, -0.25)$) --
            ($(23) + (0, -0.25)$) --
            ($(23) + (0.25, 0)$) --
            ($(25) + (0.25, 0)$) --
            ($(25) + (0, 0.25)$) -- cycle;
        
        \draw[draw=red, thick, rounded corners=2.5pt]
            ($(45) + (0, 0.25)$) --
            ($(4)  + (0, 0.25)$) --
            ($(4)  + (0.25, 0)$) --
            ($(34) + (0.25, 0)$) --
            ($(34) + (0, -0.25)$) --
            ($(4A) + (0, -0.25)$) --
            ($(4A) + (-0.25, 0)$) --
            ($(45) + (-0.25, 0)$) -- cycle;
        
        \draw[draw=red, thick, rounded corners=2.5pt]
            ($(23) + (0, -0.25)$) --
            ($(3)  + (0, -0.25)$) --
            ($(3)  + (0.25, 0)$) --
            ($(34) + (0.25, 0)$) --
            ($(34) + (0, 0.25)$) --
            ($(3A) + (0, 0.25)$) --
            ($(3A) + (-0.25, 0)$) --
            ($(4A) + (-0.25, 0)$) --
            ($(23) + (-0.25, 0)$) -- cycle;

        \end{scope}

    \end{tikzpicture}
    \caption{clique-bags are extended to cover $G_3$ in full.}
    \label{fig: step 4}

    \end{figure}

    Note that $V(G_4) \isom V$, via the map that sends every vertex $v \in V$ to $\bar K^v$. 
    We claim that this map is in fact an isomorphism from $H$ to $G_4$:
    \labtequ{cl isom}{For every $x,y \in V$, we have $\bar K^x \bar K^y \in E(G_4)$ if and only if $xy \in E(G \cap H)$ or there is a green label $(x,y)$ in $G_0$.}

    Note that \eqref{cl isom} and \eqref{cl green labels} combined imply $G_4 \isom H$ as desired, hence it remains to prove \eqref{cl isom}.
    For the backward implication, if $xy \in E(G \cap H)$, then $K^x K^y \in E(B)$ and so $ \bar K^x \bar K^y \in E(G_4)$.
    Otherwise, suppose there is an edge in $G_0$ with a green label $(x,y)$.
    Then, by \eqref{cl G1 G2 gr lab}, there is also a green label $(x,y)$ on some $2$-path in $G_3$. Let $m$ be the midpoint of such a path. By construction, the clique-bags $K^x$ and $K^y$ have been enlarged to include $m$, so $m \in \bar K^x \cap \bar K^y$ and so $\bar K^x \bar K^y \in E(G_4)$.
    
    For the forward implication, suppose two enlarged clique-bags $\bar K^x, \bar K^y$ have non-empty intersection.
    By construction, the intersection can only contain  vertices from $V(G_3) \setminus V(A)$ or the vertex of $G_3$ bearing $xy$ as red label. If $\bar K^x \cap \bar K^y$ contains the vertex with red label $xy$, then $K^x K^y \in E(B)$ and so $xy \in E(G \cap H)$.
    If $\bar K^x \cap \bar K^y$ contains a vertex $m \in V(G_3) \setminus V(A)$, then $m$ is the midpoint of a $2$-path bearing $(x,y)$ as its green label and hence $(x,y)$ was a green label in $G_0$ by \eqref{cl G1 G2 gr lab} as desired.
\end{proof}

\subsection{Proof of \Cr{cayley}}

    We now adapt the methods of the above proof to the setting of groups acting on graphs.
    Let $G,H$ be graphs with the same vertex set $V$.
    If a group $\Gamma$ acts by isomorphisms on both $G$ and $H$, inducing the same action on $V$, we say that $\Gamma$ \defi{acts on $(G,H)$}.

    Note that if a group $\Gamma$ acts on a pair of edge-sliding twins $(G,H)$, any edge-sliding operation that transforms $G$ into $H$ is invariant under the action of $\Gamma$, namely every edge $xy \in E(G)$ slides to $zx \in E(H)$ if and only if, for every $g \in \Gamma$, the edge $(g \cdot x) (g \cdot y) \in E(G)$ slides to $(g \cdot z) (g \cdot x) \in E(H)$.
    In this case, we say that \defi{$\Gamma$ acts on an edge-sliding from $G$ to $H$}. 

    \begin{lemma} \label{lem: action on est}
        Let $\Gamma$ be a group that acts on an edge-sliding from $G$ to $H$. Then $G$ is an iterated \scig\ of $H$, using at each step a cover that is $\Gamma$-invariant.
    \end{lemma}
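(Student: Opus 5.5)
We will rerun the proof of \Lr{lem: est implies cig} verbatim, with the roles of $G$ and $H$ swapped. This gives $G$ as a \scig\ of a \scig\ of $H$; the only new point is that every choice in the construction can be made $\Gamma$-equivariantly. Being edge-sliding twins is symmetric, so the hypothesis is the same as $\Gamma$ acting on an edge-sliding from $H$ to $G$. By definition, this means we are given sliding assignments $T: E_H \to E(G)$ and $S: E_G \to E(H)$, with pivots and sliders, that commute with the action: $T(g\cdot e)=g\cdot T(e)$, and similarly for $S$. The frame $G\cap H$ is $\Gamma$-invariant, since $\Gamma$ acts on both graphs with the same action on $V$.

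We then go through the steps in order and check invariance at each one.

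\textbf{Step 0.} The green labels form the set $\{(e,T(e))\}\cup\{(S(e),e)\}$, which is $\Gamma$-invariant by equivariance of $T$ and $S$.

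\textbf{Step 1.} We subdivide exactly the green-labelled edges. This is an invariant set of edges, so $\Gamma$ acts on the subdivision, and the subgraph $M$ is invariant.

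\textbf{Step 2.} The edge-bags and vertex-bags of $G\cap H$ are permuted by $\Gamma$. Each enlargement is prescribed by a green label $(x,y)$ attached to a $3$-path $P$: the pivot vertex-bag gets one specified vertex of $P$, and the slider edge-bag gets two. Applying $g$ sends this prescription to the one for $g\cdot(x,y)$ on $g\cdot P$. So the cover $\cc$ of $G_1$ is $\Gamma$-invariant, and $\Gamma$ acts on $G_2$.

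\textbf{Step 3.} We subdivide the green-labelled edges of $G_2$, which again form an invariant set.

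\textbf{Step 4.} The extended clique-bags $\bar K^v$ satisfy $g\cdot \bar K^v=\bar K^{g\cdot v}$. Hence the final cover is invariant, and the isomorphism $v\mapsto \bar K^v$ is $\Gamma$-equivariant.

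The main obstacle is the assignment of green labels to second-type edges of $G_2$, where the original proof makes a choice between $(z,x)$ and $(y,z)$. First-type labels are canonical, so they are automatically invariant. For second-type edges we would make the choice orbit by orbit: pick one representative edge $e$ in each $\Gamma$-orbit and transport its label by $g\cdot e$. This is well defined unless some $g$ stabilises $e$ while exchanging the two candidate labels. If that happens, we will attach both candidate labels to $e$ instead of one. Each candidate still satisfies \eqref{cl endpoints} and \eqref{cl G1 G2 gr lab first}, because both come from first-type edges. Properties \eqref{cl endpoints} and \eqref{cl G1 G2 gr lab} are what the final isomorphism \eqref{cl isom} relies on, and they only use that each label present is genuine. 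So in Step 3 we will subdivide such an edge once per label, in parallel, which keeps the subdivision invariant and preserves the argument.

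With all choices made equivariantly, each cover used is $\Gamma$-invariant. The conclusion then follows exactly as in \Lr{lem: est implies cig}: two $\SCIG$ steps suffice, and they can be iterated if needed.
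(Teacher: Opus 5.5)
Your plan is the paper's: choose $T,S$ equivariantly so that Steps 0--1 are invariant, note that Steps 3--4 are determined by the green labels, and reduce to an equivariant labelling $R$ of $E(G_2)\setminus E(A)$, where the first-type labels are forced. You are also right that orbit-transport on second-type edges is well defined only if $\mathrm{Stab}_\Gamma(e)$ fixes the chosen label. The paper's proof passes over this: it sets $R(g\cdot e):=R(g\cdot e')$ without checking consistency when $g$ fixes $e$.

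It can genuinely fail. Take $V=\{x,y,z,t\}$ with frame $\{xy,zt\}$. Let $xz,yt$ be the edges that disappear and $xt,yz$ the edges that appear, and let $\Gamma=\mathbb{Z}/2$ act by $(x\,y)(z\,t)$. Choose the equivariant sliding so that $xz$ carries the labels $xt$ (pivot $x$, slider $zt$) and $yz$ (pivot $z$, slider $xy$), and $yt$ carries their images. Then the slider-bags $xy$ and $zt$ become adjacent in $G_2$, and $\Gamma$ fixes this edge $e$ and both its ends. No pair $\{a,b\}$ with $a\in\{x,y\}$, $b\in\{z,t\}$ is $\Gamma$-invariant, so by \eqref{cl endpoints} no single label on $e$ is equivariant.

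The gap is in your remedy. Replacing $e$ by several internally disjoint $2$-paths is not a subdivision: $S^n$ is defined by subdividing each edge into \emph{one} path of length at most $3$. So your $G_3$ is not a subdivision of $G_2$, and your chain is not an iterated \scig\ as the statement requires.

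A single path cannot do the job either. In the example $\Gamma$ fixes the subdivided path of $e$ pointwise, so in an invariant cover $\bar K^x,\bar K^y$ have the same trace on it, as do $\bar K^z,\bar K^t$. Covering the edges of the path then forces $\bar K^x\cap\bar K^z\neq\emptyset$, which puts an edge $xz$ in $G_4$ that is not in the target graph.

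So the step you flagged as the main obstacle needs a genuinely different idea. One option is to use the freedom in choosing the equivariant $T,S$: in the example, pairing $xz$ only with $xt$ and $yt$ only with $yz$, with pivots $x$ and $y$, removes the offending edge, though it is not clear such a choice always exists. Another is to modify the Step~2 cover. As written, neither your argument nor the paper's settles this case.
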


    \begin{proof}
        We adapt the four-step construction from the proof of \Tr{qi implies iter scig}.
        Since the graph covers of $G_1, G_3$ are determined by the green labels on $G_0, G_2$ respectively, it suffices to ensure that the assignments of green labels at Steps 0 and 2 are $\Gamma$-invariant.
        In Step 0, recall that we label some edges of $G_0$ using the maps $T,S$.
        By our assumption, the edge-sliding from $G$ to $H$ is $\Gamma$-invariant.
        Hence we can ask that, for every $g \in \Gamma$ and every edge $e$, we have $T(g \cdot e) = g \cdot T(e)$ and $S(g \cdot e) = g \cdot S(e)$.
        With this choice of $T,S$, the labeling of $G_0$ is $\Gamma$-invariant, and so is the resulting cover of $G_1$.
        Hence $\Gamma$ also acts by automorphisms on the intersection graph $G_2$.
        In Step 2, recall that we assign to every edge in $e \in E(G_2) \setminus E(A)$ a single green label, let us say $R(e)$.
        We claim that we can choose $R$ such that, for every edge $e \in E(G_2) \setminus E(A)$,
        \begin{equation} \label{eq: gamma inv}
        R( g \cdot e) = g \cdot R(e).
        \end{equation}
        Indeed, if $e\in E(G_2) \setminus E(A)$ joins a slider $yz$ with a pivot $x$, without loss of generality, we assume $xy \in E(G)$ is sliding to $zx \in E(H)$ and hence $R(e)=(z,x)$.
        Pick some $g \in \Gamma$; by our assumption there is an edge $g \cdot e \in E(G_2) \setminus E(A)$ between the slider $(g \cdot y)(g \cdot z)$ and the pivot $(g \cdot x)$.
        In this case, the assignment $R( g \cdot e) = (g \cdot z)(g \cdot x)$ is forced and satisfies \eqref{eq: gamma inv}.
        If an edge $e$ joins two sliders $xy$, $zt$, recall that we assign to $e$ the same label as of some edge $e'$ ---without loss of generality--- between $xy$ and $z$.
        Pick some $g \in \Gamma$; by our assumption there is 
        an edge $g \cdot e$ between $(g \cdot x)(g \cdot y)$ and $(g \cdot z)(g \cdot t)$ and an edge $g \cdot e'$ between $(g \cdot x)(g \cdot y)$ and $(g \cdot z)$.
        Hence a valid choice for the green label $R(g \cdot e)$ is $R(g \cdot e')$, which by the previous case is equal to $g \cdot R(e') = g \cdot R(e)$.
        Finally, the cover of $G_3$ is determined by the green labels attached to $G_2$ and, with this choice, it is $\Gamma$-invariant.
    \end{proof}

    \begin{lemma} \label{lem: action on biL}
        Let $\Gamma$ be a group that acts by isomorphisms on $(G,H)$, where $G,H$ are bi-Lipschitz equivalent graphs. 
        Then $G, H$ are iterated edge-sliding twins, where at each step $\Gamma$ acts on the corresponding edge-sliding operation.
    \end{lemma}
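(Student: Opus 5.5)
The plan is to rerun the proof of \Lr{lem: biL and est}, \ref{bil-est i} $\Rightarrow$ \ref{bil-est ii}, making every choice $\Gamma$-equivariantly. Since $\Gamma$ acts on $(G,H)$ with the same action on $V$, the identity is a bi-Lipschitz equivalence commuting with the action. We will produce two chains, $H=G_0,\dots,G_{\lceil M\rceil}=G\cup H$ and symmetrically from $G$ to $G\cup H$. Concatenating them gives the desired sequence. Each intermediate graph must be $\Gamma$-invariant, and at each step the sliding data (pivot, slider, and target edge) must be chosen $\Gamma$-equivariantly.

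To do this, I will work one $\Gamma$-orbit of edges at a time. Fix a set of orbit representatives $\mathcal{R}$ of the action of $\Gamma$ on $E(G)\setminus E(H)$. This set is $\Gamma$-invariant since both $G$ and $H$ are. For each $e=uv\in\mathcal{R}$, choose as in the proof of \Lr{lem: biL and est} a sliding $\{x_{i,e}y_{i,e}\}_{i=0}^{\lceil M\rceil}$ along $H$ from an edge $e'\in E(H)$ to $e$. It is built from two geodesics of $H$ as in \Lr{lem: iest}, padded as in \Rr{rem: est eventually}. Then, for $g\in\Gamma$, we would like to set the sliding of $g\cdot e$ to be $\{(g\cdot x_{i,e})(g\cdot y_{i,e})\}_i$, and put $G_i=H\cup\{g\cdot x_{i,e}y_{i,e}: e\in\mathcal{R},\,g\in\Gamma\}$.

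This is where I expect the main obstacle: the definition is ambiguous when $g\cdot e=h\cdot e$ for $g\neq h$, i.e.\ when the stabiliser $\Gamma_e$ is nontrivial. Elements of $\Gamma_e$ may swap $u$ and $v$ or move the chosen geodesics. My first fix is not to pick a single sliding per edge, but to include, for each $e\in\mathcal{R}$, all slidings $\{(g\cdot x_{i,e})(g\cdot y_{i,e})\}_i$ for $g\in\Gamma$ simultaneously, so that several edges may slide onto the same $g\cdot e$. This is permitted by the definition of edge-sliding twins, which only requires, for each edge in the symmetric difference, the existence of some $z$ with a frame edge and an edge of the other graph, not a bijection. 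The frame at step $i$ still contains $H$, so every new edge at step $i$ is witnessed by the slider $g\cdot(\text{edge of }P\cup Q)\in E(H)$ and the pivot $g\cdot$(fixed endpoint). Consequently each $G_i$ is $\Gamma$-invariant by construction, and the step from $G_{i-1}$ to $G_i$ is witnessed by a $\Gamma$-equivariant assignment of pivots and sliders. This is exactly the hypothesis ``$\Gamma$ acts on an edge-sliding'', with the maps $T,S$ of Step 0 equivariant.

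Edges that vanish between steps also need witnesses. A vanishing edge has the form $g\cdot x_{i-1,e}y_{i-1,e}$, and its successor $g\cdot x_{i,e}y_{i,e}$ lies in $G_i$ via a frame slider, so again the witness is chosen equivariantly. Finally, $G_0=H$ and $G_{\lceil M\rceil}=G\cup H$, because every edge of $G\setminus H$ lies in the orbit of some representative. Repeating the construction with the roles of $G$ and $H$ exchanged and concatenating gives at most $2\lceil M\rceil$ steps, each acted on by $\Gamma$.
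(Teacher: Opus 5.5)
Your argument is correct and has the same skeleton as the paper's: rerun the forward implication of \Lr{lem: biL and est} with $\Gamma$-compatible choices, and build chains from $H$ and from $G$ to $G\cup H$. The difference is in how you make the choices compatible, and there your version is the sound one.

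The paper simply asks that $x_{i,g\cdot e}=g\cdot x_{i,e}$ and $y_{i,g\cdot e}=g\cdot y_{i,e}$, i.e.\ that one sliding per edge be chosen equivariantly. This is impossible whenever some $\gamma\in\Gamma$ fixes an edge $e=uv\in E(G)\setminus E(H)$ but swaps $u$ and $v$. The last edge of the sliding that differs from $uv$ shares exactly one endpoint with $uv$, so it cannot be $\gamma$-invariant. Such an edge exists because the sliding starts in $E(H)$. This does occur, e.g.\ for the edge $\{1,s\}$ with $s\in S\setminus S'$ an involution in the setting of \Cr{cayley}.

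Your union over all translates of one sliding per orbit avoids the issue. A new (resp.\ vanishing) edge at step $i$ comes from a translate whose $i$-th step is not a padding step. The predecessor (resp.\ successor) under that translate then supplies a pivot and a slider in $H\subseteq G_{i-1}\cap G_i$. Since every $G_i$ is also $\Gamma$-invariant, this is exactly what the paper's definition of ``$\Gamma$ acts on an edge-sliding'' amounts to.

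There are two caveats.

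First, your construction gives a $\Gamma$-invariant \emph{set} of witnesses, not single-valued equivariant maps. Your closing claim that the maps $T,S$ of Step~0 can then be taken equivariant is false in the situation just described. If $\gamma$ swaps the endpoints of a new edge $f$, then $S(f)$ contains one endpoint of $f$ as pivot and $\gamma\cdot S(f)$ contains the other. Hence $S(\gamma\cdot f)=S(f)\neq\gamma\cdot S(f)$. The remark is not needed for this lemma and should be dropped.

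Second, like the paper, you silently take the bi-Lipschitz equivalence to be $\mathrm{Id}_V$. That is an extra assumption, not a consequence of $\Gamma$ acting on $(G,H)$. It is also necessary, since iterated edge-sliding twins are always bi-Lipschitz equivalent via the identity.
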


    \begin{proof}
        We adapt the proof of the implication \ref{bil-est i} $\Rightarrow$ \ref{bil-est ii} of \Lr{lem: biL and est}, ensuring that every choice is $\Gamma$-invariant.
        Suppose we want to show the statement for $G \cup H$ and $H$.
        Note that by our assumption $\Gamma$ acts on $G$ and $H$ with the same induced action on their vertex set, and hence it also acts on $(G \cup H,H)$.
        Recall that for any $e \in E(G) \setminus E(H)$, we chose a sliding $\{G_{i,e}\}^{\lceil M \rceil}_{i=0}$ from an edge $e' \in E(H)$ to $e$ of length $\lceil M \rceil+1$.
        At every step $i$, we had $G_{i,e} = H \cup \{x_{i,e}y_{i,e}\}$, for some vertices $x_{i,e}, y_{i,e}$.
        Using the additional hypothesis that $\Gamma$ acts on $(G \cup H,H)$ we can ask that, for every $g \in \Gamma$ and every $e \in E(G) \setminus E(H)$, we have $x_{i, g \cdot e} = g \cdot x_{i,e}$ and $y_{i, g \cdot e} = g \cdot y_{i,e}$.
        The resulting sequence of graphs $\{G_i\}_{i=0}^{\lceil M \rceil}$, obtained as in \Lr{lem: biL and est} is such that at every step the group $\Gamma$ acts on the corresponding edge-sliding operation.
    \end{proof}

    We are now ready to prove the main result of this subsection, \Cr{cayley}, which we restate for convenience:

    \begin{nocorollary} 
    Let $\Gamma$ be a finitely generated group and let $\Gamma' \leq \Gamma$ be a finite index subgroup. Let $S, S'$ be generating sets of $\Gamma, \Gamma'$ respectively. Then the Cayley graph $G = Cay(\Gamma, S)$ is an iterated \scig\ of $G'= Cay(\Gamma',S')$, using at each step a cover that is $\Gamma'$-invariant.
\end{nocorollary}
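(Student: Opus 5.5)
The plan is to reduce to a pair of bi-Lipschitz equivalent graphs on a common vertex set carrying the same $\Gamma'$-action, and then chain \Lr{lem: action on biL} with \Lr{lem: action on est}.

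\textbf{Step 1: build a $\Gamma'$-equivariant model of $G$ on the vertex set $\Gamma'$.} Fix a finite set $R=\{r_1=1,\dots,r_k\}$ of right coset representatives, so that $\Gamma=\bigsqcup_i \Gamma' r_i$. Build $G$ from $G'$ in two moves, mirroring \Lr{VG = VH} but keeping everything $\Gamma'$-invariant.
\begin{itemize}
\item First, attach to each $h\in\Gamma'$ a clique on the vertices $\{hr_i\}_{i\le k}$. This gives a graph $G''$ on vertex set $\Gamma$, and $\Gamma'$ acts on $G''$ by left multiplication.
\item By \Rr{cig itself}, subdivide each edge of $G'$ once and take the stars at original vertices, together with the single new vertices $hr_i$ ($i\ge 2$) attached as pendant pieces. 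Choosing these pieces $\Gamma'$-equivariantly, $G''$ arises as a \scig\ of $G'$ with a $\Gamma'$-invariant cover. Alternatively, we may realize the clique attachment directly by enlarging vertex-bags.
\end{itemize}

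\textbf{Step 2: compare $G''$ with $G$.} Now $G''$ and $G=Cay(\Gamma,S)$ have the same vertex set $\Gamma$, and $\Gamma'$ acts on both by left multiplication, inducing the same action on $V$. So $\Gamma'$ acts on $(G'',G)$.

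We need $G''$ and $G$ to be bi-Lipschitz equivalent via the identity. Since $\Gamma'$ has finite index, the inclusion $\Gamma'\to\Gamma$ is a quasi-isometry. Both graphs are locally finite, connected, and carry the same cocompact action, so it suffices to bound the distance in each graph between the endpoints of an edge of the other.
\begin{itemize}
\item There are finitely many $\Gamma'$-orbits of edges of $G$: an edge $g,gs$ with $g=hr_i$ is the translate by $h$ of the edge $r_i,r_is$.
\item Likewise, there are finitely many orbits of edges of $G''$.
\item Hence both required distances are bounded by the maximum over finitely many representatives, which is finite since both graphs are connected.
\end{itemize}
This gives the identity map as an $M$-bi-Lipschitz equivalence.

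\textbf{Step 3: conclude.} Apply \Lr{lem: action on biL} to get a finite chain $G''=G_0,\dots,G_m=G$ of edge-sliding twins, with $\Gamma'$ acting on each edge-sliding operation. Then apply \Lr{lem: action on est} to each consecutive pair (in the direction from $G_{j}$ to $G_{j+1}$). This realizes each step as an iterated \scig\ with $\Gamma'$-invariant covers. Concatenating these with Step 1 expresses $G$ as an iterated \scig\ of $G'$, with every cover $\Gamma'$-invariant.

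\textbf{Main obstacle.} I expect the delicate point to be Step 1. There we must check that the subdivision-and-cover realizing the clique attachment can be chosen $\Gamma'$-invariantly; this requires care when $S'$ contains involutions or when there are multiple edges.

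I also need \Lr{lem: action on est} to produce the twin in the correct direction. The lemma is stated as ``$G$ is a \scig\ of $H$'', but by the symmetry of the edge-sliding twin relation we may swap the roles of the two graphs.

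Finally, the cover elements should be chosen orbitwise. We pick a choice on orbit representatives and translate it, using that the action of $\Gamma'$ on $\Gamma$ is free, so the translates do not conflict.
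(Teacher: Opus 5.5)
Your proposal is correct and has the same architecture as the paper's proof. Attach a $k$-clique at each $h\in\Gamma'$ to get $G''$, realised as a \scig\ of $G'$ as in \Lr{VG = VH}. Show that the identity $G''\to G$ is bi-Lipschitz. Then chain \Lr{lem: action on biL} with \Lr{lem: action on est}.

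The real difference is in the bi-Lipschitz step. The paper first settles the case $\Gamma=\Gamma'$, which is itself a full pass through the edge-sliding machinery, and uses it to replace $S$ by $S'\cup T$. Then $G^*\subseteq G$, so $d_G\le d_{G^*}$ holds automatically, and the reverse inequality comes from the quasi-isometry of the inclusion $G'\hookrightarrow G$. Your argument instead uses that $\Gamma'$ acts by automorphisms on both $G''$ and $G$ with finitely many edge-orbits; this needs $S,S'$ finite and $[\Gamma:\Gamma']<\infty$. So the endpoints of any edge of either graph are at uniformly bounded distance in the other. This handles an arbitrary finite $S$ at once. It makes both the reduction to $S'\cup T$ and the separate case $\Gamma=\Gamma'$ unnecessary, since that case is just your argument with $k=1$. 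It also never uses the quasi-isometry of the inclusion that you quote. The paper's route gets one inequality for free from a subgraph relation, at the cost of an extra round of the construction; yours is shorter and uniform.

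Two small corrections. First, in Step 1 the cover elements indexed by $hr_2,\dots,hr_k$ must be copies of the singleton $\{h\}$ in the subdivided $G'$. The vertices $hr_i$ do not exist in that graph, so they cannot appear as ``pendant pieces'' of the cover. Subdividing every edge once and taking stars plus these singletons is canonical, so the cover is automatically $\Gamma'$-invariant. Hence the obstacle you anticipate in Step 1 does not arise.

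Second, involutions genuinely matter only inside the equivariant choices of \Lr{lem: action on biL} and \Lr{lem: action on est}. A free action on vertices can still swap the two ends of an edge, for instance $\{x,xs\}$ with $s^2=1$ and $xsx^{-1}\in\Gamma'$. So choosing oriented data orbitwise is not automatic there: pivots, or the endpoints $x_{i,e},y_{i,e}$. You use these lemmas as stated, exactly as the paper does, so this point does not separate the two arguments.
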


    \begin{proof}

    We first consider the case where $\Gamma=\Gamma'$.  
    Easily, in this case, the Cayley graphs $G, G'$ are bi-Lipschitz equivalent and $\Gamma$ acts on $(G, G')$.
    By \Lr{lem: action on biL} and \Lr{lem: action on est}, the statement follows.
    
    In the general case where $k:=[\Gamma : \Gamma'] < \infty$, we can apply the previous case to replace $S$ with any generating set of $G$.
    Hence we can assume $S = S' \cup T$, where $T = \{t_1, \dots, t_k\}$ is a set of representatives for the right cosets of $\Gamma'$ in $\Gamma$ and $t_1=\Id$.
    Let $G''$ be the graph obtained by attaching to each vertex $g \in G'$ a $k$-clique $K^g$.
    We associate each vertex of $K^g$ with the element $gt_i \in \Gamma$ (in particular, we associate $g$ with $gt_1=g$).
    The resulting graph $G''$ has vertex set $\Gamma$ and, for every $g, g'\in \Gamma'$ and $t, t' \in T$,
    \labtequ{cl action isom}{$(gt)(g't')$ is an edge of $G''$ if and only if $g = g'$ or $t = t' = \Id$ and $g'^{-1}g \in S'$.}
    
    Clearly $\Gamma'$ acts by left multiplication on the vertex set $\Gamma$ of $G''$ and we claim that this is indeed an action by isomorphisms on $G''$.
    Indeed, pick $h, g, g' \in \Gamma'$ and $t, t' \in T$.
    Then using \eqref{cl action isom}, there is an edge $(hgt)(hg't')$ in $G''$ if and only if $hg = hg'$ or $t = t' = \Id$ and $(hg')^{-1}hg \in S'$. 
    Using \eqref{cl action isom} again, the latter is equivalent to the existence of an edge $(gt)(g't')$, as desired.
    In fact, since $\Gamma'$ acts by isomorphisms (by left multiplication) also on $G$, we deduce that $\Gamma'$ acts on $(G'', G)$.
    
    If we can show that $G'', G$ are bi-Lipschitz equivalent, then, applying \Lr{lem: action on biL} and \Lr{lem: action on est} as above, we deduce that $G$ is an iterated \cig\ with subdivisions of $G''$, using at each step a cover that is $\Gamma'$-invariant.
    Furthermore, recalling that the construction of $G''$ can be realized via a \cig\ with subdivisions of $G'$ (as in the proof of \Lr{VG = VH}) using a cover that is $\Gamma'$-invariant, this would conclude the proof.

    It remains to check that $G'', G$ are bi-Lipschitz equivalent.
    Note that $G''$ is bi-Lipschitz equivalent to the graph $G^*$ obtained from $G'$ by attaching $k-1$ leaves to each vertex (instead of a clique).
    So it suffices to show that $G^*$ and $G$ are bi-Lipschitz equivalent, where $G^* \subseteq G$ since $S = S' \cup T$.
    Clearly, we have $d_G \leq d_{G^*}$.
    On the other hand, pick $g, g' \in \Gamma'$ and $t, t' \in T$.
    We have $d_{G^*}(gt, g't') \leq d_{G^*}(g, g') + 2 = d_{G'}(g, g') + 2 $. 
    Since $[\Gamma : \Gamma']$ is finite, it is well-known ---and easy to see--- that the inclusion map from $G'$ to $G$ is a $(M,A)$-quasi-isometry (for some $M \geq 1, A \geq 0$).
    Hence we can bound $d_{G'}(g, g') + 2$ by $M(d_G(g, g'))+A+2$.
    Recalling that $T \subseteq S$, we also have $d_G(g, gt) \leq 1$ and $d_G(g', g't') \leq 1$, hence $M(d_G(g, g'))+A+2 \leq M(d_G(gt, g't')+2)+A+2$.
    This proves the other desired inequality, and establishes that the identity map between $G^*$ and $G$ is a bijective quasi-isometry and hence the two graphs are bi-Lipschitz equivalent.    
    \end{proof}

\section{Open problems} \label{sec OP}

As mentioned in the introduction, we do not know whether subdivisions are necessary in \Tr{pl iff subd iter str}:
\begin{problem} \label{prob subd}
    Is every countable quasi-planar graph an iterated string graph?\\ In particular, is every finite graph an iterated string graph?
\end{problem} 

    \comment{We start with a potential strengthening of \Tr{Davies}, which may be too good to be true: 

    \begin{problem} \label{pr D gen}
    Let \cc\ be a contraction-closed class of graphs. Must every \cig\ of a graph of \cc\ be (uniformly) \qic\ to a graph in \cc?\\ (In other words, is $\CIG(\cc) \subseteq \QI_{(M,A)}(\cc)$ for fixed $M,A$ depending on \cc\ only)
\end{problem}

    Similarly, one can ask if $\QI(\cc)$ is contraction-closed \fe\ contraction-closed \cc; by our \Tr{cig to cm}, this would imply a positive answer to \Prb{pr D gen}. We could ask for even more: is it true \fe\ $G\in \cc$, and every \cig\ $H$ of $G$, that $H$ is \qic\ to a contraction minor of $G$?
}

\subsection{Forbidding induced minors}

It is well-known, and not hard to see, that if a graph $G$ forbids a graph $H$ as a minor, then no graph in $\CIG(G)$ contains the 1-subdivision $H^{(1)}$ of $H$ as an induced minor \cite{LeeSep,BonHickInd}. A question, independently posed  by Lokshtanov, McCarty and Wiederrecht,  asked about the converse, and Bonnet \& Hickingbotham \cite{BonHickInd} provided an example showing that it is generally not true that every graph forbidding some $H$ as an induced minor is a \cig\ of a graph in $Forb(K_t)$ for any fixed $t$. Our work raises the question of whether we can obtain a converse if we allow iterated \cig s, possibly mixed with bounded subdivisions:

\begin{question} \label{Q Find CIG}
    Let $H$ be a finite graph. Are there $t,n\in \N$ \st\ $\Find(H)\subseteq \CIG^n(\Forb(K_t))$? Or $\Find(H)\subseteq \SCIG^n(\Forb(K_t))$? 
\end{question}

Interestingly, a positive answer to this question would imply, by our \Cr{equivalence} and the fact that $\Forb(K_t)$ has finite asymptotic dimension \cite{LiuAss}, the following conjecture of Abrishami et al.: 
\begin{conjecture}[{\cite[Conjecture 8.6.]{ABDDMRW}}]  \label{conj ABDDMRW}
     For every finite graph $H$, the class $\Find(H)$ has finite asymptotic dimension.
\end{conjecture}

This in turn has interesting consequences on graph colouring, because as proved in the aforementioned paper \cite[Theorem 1.4]{ABDDMRW}, every hereditary graph class with finite asymptotic dimension is Burling-controlled. 

More generally, to prove \Cnr{conj ABDDMRW}, it would suffice to prove weaker versions of \Qr{Q Find CIG} where apart from bounded edge-subdivisions we could also use any other graph transformation that preserves the quasi-isometry class. See also \cite{sphereDimSoCG} for related questions about $\Find(H)$.

\subsection{A notion of stretch} \label{sec stretch}

Given a graph \G, and a spanning subgraph $H \subseteq G$, we define the \defi{$H$-stretch} of \g to be the smallest $K \in \N \cup \{\infty\}$ \st\ for every $xy \in E(G)$ we have $d_H(x,y) \leq K$. In the rich graph spanner literature one also says that $H$ is a $K$-spanner of $G$.

\begin{observation} \label{obs str}
Whenever $H\subseteq G$ is spanning, $G$ has bounded $H$-stretch if and only if the identity $\Id: V(H) \to V(G)$ is a quasi-isometry, and in fact a bi-Lipchitz equivalence.
\end{observation}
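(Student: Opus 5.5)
We need to prove Observation: for spanning $H\subseteq G$, $G$ has bounded $H$-stretch iff the identity $\Id: V(H)\to V(G)$ is a quasi-isometry, and in fact a bi-Lipschitz equivalence.

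The plan is to compare $d_G$ and $d_H$ directly on the common vertex set $V:=V(H)=V(G)$. Since $H\subseteq G$, every path in $H$ is a path in $G$, so $d_G(x,y)\le d_H(x,y)$ for all $x,y\in V$. Thus $\Id$ is always $1$-Lipschitz from $H$ to $G$. Moreover, being bijective, $\Id$ automatically satisfies \ref{quasiisom:2} with $A=0$. Everything therefore reduces to controlling $d_H$ by $d_G$.

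For the forward direction, suppose the $H$-stretch is $K<\infty$. Take $x,y\in V$ and a geodesic $x=v_0v_1\dots v_n=y$ in $G$, so that $n=d_G(x,y)$. Each $v_{i-1}v_i$ is an edge of $G$, hence $d_H(v_{i-1},v_i)\le K$. The triangle inequality in $H$ then gives $d_H(x,y)\le K\,d_G(x,y)$. If $d_G(x,y)=\infty$ we also get $d_H(x,y)=\infty$, since $d_H\ge d_G$. Combining the two inequalities, $\Id$ satisfies \ref{quasiisom:1} with $M=\max(K,1)$ and $A=0$. Together with bijectivity, this makes $\Id$ a bi-Lipschitz equivalence, and in particular a quasi-isometry.

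For the backward direction, suppose $\Id$ is an $(M,A)$-quasi-isometry. Then for every edge $xy\in E(G)$ we have
$$d_H(x,y)\le M\,(d_G(x,y)+A)=M(1+A).$$
The direction used here is the lower bound in \ref{quasiisom:1} for $\Id$, rearranged. Hence the $H$-stretch is at most $\lceil M(1+A)\rceil$, which is finite.

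There is no real obstacle in this argument. The only point needing care is the bookkeeping of the direction of the inequalities, namely that the lower bound in \ref{quasiisom:1} bounds $d_H$ in terms of $d_G$. One should also note that infinite distances, which can occur for disconnected graphs, cause no trouble.
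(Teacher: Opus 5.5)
Your proof is correct and is the routine verification the paper leaves implicit; it states this as an observation with no proof. The steps are: $d_G\le d_H$ since $H\subseteq G$; summing the stretch bound along a $G$-geodesic gives $d_H\le K\,d_G$; and conversely the lower bound in \ref{quasiisom:1}, applied to edges of $G$, bounds the stretch by $M(1+A)$, with your explicit constants making the equivalence uniform, as needed for families of finite graphs.
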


It is natural to ask whether this is enough to understand quasi-planar graphs, which was our initial motivation for studying this notion of stretch. In a similar spirit, Berger \& Seymour \cite{BerSeyBou} considered the stretch of (spanning) trees of \defi{quasi-trees}, i.e.\ graphs quasi-isometric to trees. Motivated by an analogue of \Or{obs str}, they wondered whether every $(M,A)$-quasi-tree \g admits a spanning tree $T$ with bounded $T$-stretch. They proved that the answer is negative, with $G$ being the Farey graph as an example \cite[Theorem 6.1]{BerSeyBou}. Nevertheless, will show that the answer becomes positive if we are allowed to raise $G$ to a power $K$ depending on $M,A$:

\begin{corollary} \label{cor power}
    For every $M\geq 1, A\geq 0$ there are $K,K'\in \N$ \st\ if \g is a $(M,A)$-quasi-tree, then $G^K$ has a spanning tree $T$ with $T$-stretch at most $K'$.
\end{corollary}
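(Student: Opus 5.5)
Let $f\colon G\to T_0$ be an $(M,A)$-quasi-isometry to a tree $T_0$. We want to replace $T_0$ by a spanning tree $T$ on $V(G)$ such that the identity $V(T)\to V(G)$ is bi-Lipschitz, and then read off a bounded stretch. The trees are closed under adding leaves, since attaching a leaf keeps a tree a tree. They are also closed under contracting connected subgraphs. So by \Cr{leaves and sh contr}~\ref{lshc iii}, $G$ is bi-Lipschitz equivalent to some tree $T_1$ with constant $M_1$ depending only on $M,A$. Concretely, we first make $f$ surjective onto an $A$-shallow contraction minor of $T_0$ by \Prr{surjective}, which is again a tree. We then make it injective by \Prr{injective}, attaching leaves to the tree. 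Let $\varphi\colon V(G)\to V(T_1)$ be the resulting bijection, and transport $T_1$ along $\varphi^{-1}$ to get a tree $T$ with vertex set $V(G)$, so that $d_T$ and $d_G$ are $M_1$-bi-Lipschitz on $V(G)$.

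We then choose $K:=M_1$ and $K':=M_1$ and check the two required properties. First, $T\subseteq G^K$. Every edge $xy$ of $T$ has $d_T(x,y)=1$, so $d_G(x,y)\le M_1$, and hence $xy\in E(G^{K})$. Thus $T$ is a spanning tree of $G^K$. Second, we bound the stretch. Every edge $xy$ of $G^K$ has $d_G(x,y)\le K$, so $d_T(x,y)\le M_1K=M_1^2$. This gives $T$-stretch at most $K'$ after setting $K':=M_1^2$. Equivalently, this is \Or{obs str} applied to $T\subseteq G^K$, since $G^K$ is bi-Lipschitz to $G$ with constant $K$.

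The main point needing care is that the constant $M_1$ produced by \Prr{injective} and \Prr{surjective} depends only on $M,A$, and that both operations stay inside the class of trees. For the contraction step, note that contracting connected vertex sets of a tree yields a tree; possible multiple edges or loops are simply deleted, which does not affect distances. For the leaf step, attaching leaves to a tree obviously yields a tree. A minor issue is that $G$ is assumed connected, which holds implicitly for quasi-trees. Otherwise we treat components separately and $T$ becomes a spanning forest. Beyond these checks I expect no real obstacle.
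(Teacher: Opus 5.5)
Your proof is correct and is essentially the paper's argument. Like the paper, you apply part~\ref{lshc iii} of \Cr{leaves and sh contr} to the class of trees to get a tree bi-Lipschitz equivalent to $G$ with constant depending only on $M,A$, then run the argument of \Prr{cor: qi and bs}, which you inline with explicit constants. The only slip is bookkeeping: take $K=\lceil M_1\rceil$ so that $K\in\N$, and $K'=M_1\lceil M_1\rceil$, rather than first announcing $K'=M_1$.
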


This holds much more generally, in particular when \g is quasi-planar, and we want a spanning planar sugraph. We will derive it from the following:

\begin{proposition} \label{cor: qi and bs}
    Let $\mathcal{C}$ be a class of graphs. Then for any graph $G$ the following are equivalent:
    \begin{enumerate}
        \item \label{qibs i} $G$ is bi-Lipschitz equivalent to an element of $\mathcal{C}$;
        \item \label{qibs ii} there exist some $H \in \mathcal{C}$ and $K \in \N$ such that $H^K$ has bounded $G$-stretch;
        \item \label{qibs iii} there exist some $H \in \mathcal{C}$ and $K \in \N$ such that $G^K$ has bounded $H$-stretch.
        \item \label{qibs iv} there exist some $H \in \mathcal{C}$ and $K \in \N$ such that $H^K$ has bounded $G$-stretch and $G^K$ has bounded $H$-stretch.
    \end{enumerate}
\end{proposition}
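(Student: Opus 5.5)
We prove the cycle of implications \ref{qibs i} $\Rightarrow$ \ref{qibs iv} $\Rightarrow$ \ref{qibs ii} $\Rightarrow$ \ref{qibs i} and \ref{qibs iv} $\Rightarrow$ \ref{qibs iii} $\Rightarrow$ \ref{qibs i}. Throughout we identify $V(G)$ with $V(H)$ via the relevant bijection; this is what ``$H^K$ has bounded $G$-stretch'' implicitly requires. The stretch notion is defined for spanning subgraphs, and we extend it to any two graphs on a common vertex set in the obvious way: every edge of one has bounded distance in the other.

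For \ref{qibs i} $\Rightarrow$ \ref{qibs iv}, let $\varphi: V(G)\to V(H)$ be an $M$-bi-Lipschitz bijection with $H\in\mathcal C$, and transport $H$ along $\varphi$ so that $V(G)=V(H)$ and the identity is $M$-bi-Lipschitz. Take $K:=\lceil M\rceil$. Every edge $xy$ of $H^K$ has $d_H(x,y)\le K$, hence $d_G(x,y)\le MK$. So $H^K$ has $G$-stretch at most $MK$, and symmetrically $G^K$ has $H$-stretch at most $MK$. The implications \ref{qibs iv} $\Rightarrow$ \ref{qibs ii} and \ref{qibs iv} $\Rightarrow$ \ref{qibs iii} are trivial.

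For \ref{qibs ii} $\Rightarrow$ \ref{qibs i}, suppose $V(G)=V(H)$ and every edge of $H^K$ has $G$-distance at most $K'$. Then every $H$-edge, being an edge of $H^K$ when $K\ge1$, has $G$-distance at most $K'$, so $d_G\le K' d_H$. The reverse inequality $d_H\le C d_G$ does not follow from stretch alone, since $G$ could have extra edges far apart in $H$. This is the main obstacle: the statement as written would need an extra hypothesis. One option is to require $H^K$ to be a spanning subgraph, i.e.\ $G\subseteq H^K$ as in the definition of stretch given before \Or{obs str}. Under that reading, every $G$-edge has $d_H\le K$, so $d_H\le K d_G$, and combined with the bound above the identity is $\max(K,K')$-bi-Lipschitz, giving \ref{qibs i}. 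Symmetrically, for \ref{qibs iii} $\Rightarrow$ \ref{qibs i} the spanning condition means $H\subseteq G^K$, so $d_G\le K d_H$, while stretch gives $d_H\le K' d_G$.

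So the key step is pinning down the convention: ``$H^K$ has bounded $G$-stretch'' should be read as $G$ being a spanning subgraph of $H^K$ with $H^K\subseteq G^{K'}$. This is exactly the pair of containments appearing in \Rr{rem: est implies biL}. With this reading every implication reduces to the one-line distance comparison along paths, in the spirit of \Or{obs str}, and \Lr{lem: biL and est} is not needed. If instead one insists on non-spanning stretch, then \ref{qibs i} requires a Voronoi-cell argument in the spirit of \Prr{surjective} and \Prr{injective} to fix the vertex sets. The proof would then need $\mathcal C$ to be closed under adding leaves and shallow contractions, as in \Cr{leaves and sh contr}. I would first check which convention the authors intend before committing to the longer argument.
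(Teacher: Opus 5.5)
Your proof is correct and is essentially the paper's. The paper defines $H$-stretch only for a spanning subgraph $H$, so your containment reading is the intended convention rather than an extra hypothesis. Without it, \ref{qibs ii}$\Rightarrow$\ref{qibs i} is simply false: take $\mathcal{C}$ the class of trees, $G=\Z^2$ and $H$ a spanning tree of it, so the Voronoi detour you mention could not rescue it. The paper likewise proves \ref{qibs i}$\Rightarrow$\ref{qibs iv} and returns from \ref{qibs ii} or \ref{qibs iii} via \Or{obs str}, which is your one-line distance comparison; the one step to make explicit is that \ref{qibs i}$\Rightarrow$\ref{qibs iv} also needs $G\subseteq H^{K}$ and $H\subseteq G^{K}$, which your choice $K=\lceil M\rceil$ guarantees because every edge of either graph has length at most $M$ in the other.
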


\begin{proof}
    It suffices to prove that \ref{qibs i} implies \ref{qibs iv}, since the latter entails both \ref{qibs ii}, \ref{qibs iii}, any of which implies \ref{qibs i} by \Or{obs str}. 
    
    So consider some  bi-Lipchitz equivalence $f$ from $G$ to some $H \in \cc$. Let us first check that $G^{K'}$ has bounded $H$-stretch for some $K'$. Indeed, since bi-Lipchitz equivalences are bijective, we may assume that $V(G)=V(H)$ and $f=\Id_{V(G)}$. Then any edge $xy \in E(H) \setminus E(G)$ satisfies $d_G(x,y) \leq K'$ for some $K'$, and so $H\subseteq G^{K'}$. Since $G^{K'}$ is quasi-isometric to $G$ it is also quasi-isometric to $H$, and so $G^{K'}$ has bounded $H$-stretch by \Or{obs str}. 
    
    We can repeat the argument on $f^{-1}$ to deduce that $H^{K''}$ has bounded $G$-stretch for some $K''
    \in \N$, and choose $K:= \max \{K',K''\}$.
\end{proof}

To see how this implies \Cr{cor power}, first apply part \ref{lshc iii} of \Cr{leaves and sh contr} to the class $\cx$ of trees, to deduce that any quasi-tree $G$ is bi-Lipschitz equivalent to a tree.
Then part \ref{qibs ii} of \Prr{cor: qi and bs} guarantees the 
existence of a tree $T'$ spanning $G^K$ with stretch at most $K'$, for some $K, K'$.

\smallskip
The aforementioned result of Berger \& Seymour motivates the following, and again we expect the answer to be negative.

\begin{question} \label{spanning planar}
    Let \g be a quasi-planar graph. Is there a spanning planar subgraph $H\subseteq G$ such that $G$ has bounded $H$-stretch?
\end{question}

Catusse, Chepoi \& Vax{\`e}s \cite{CCVHop} proved that the answer is positive for unit-disc graphs, which are a subclass of string graphs. 

\smallskip
The notion of stretch raises many algorithmic questions; we offer the following as a sample. 

\begin{question} \label{algo GH}
Is there a polynomial-time algorithm that given two graphs $G,H$ with $|V(G)|=|V(H)|$ computes/approximates\footnote{up to a fixed multiplicative constant} the $H$-stretch of \G?
\end{question}

Given a class \cp\ of graphs, we define the \defi{\cp-stretch}
of a graph $G$ as the minimal $H$-stretch of \g over all  ${H\in \cp}$. For example, we can talk about the planar stretch of \G.

\begin{question} \label{algo planar}
Is there a polynomial-time algorithm for computing/approximating the planar stretch of a finite graph?
\end{question}

Results on analogous problems \cite{ADGK2t,BDLLTre,CDNRV} suggest that exact computation may be NP-hard, but approximation up to a large enough multiplicative constant may be tractable.


\comment{
\begin{conjecture} \label{spanning planar}
    A graph \g is quasi-planar, \iff\ there is a spanning planar subgraph $H\subseteq G$ and $K\in \R$ \st\ $d_H(x,y)< K$ \fe\ $xy\in E(G) \sm E(H)$.
\end{conjecture}
}

\bibliographystyle{plain}
\bibliography{collective}

@string{combi = {Combinatorica}}

@string{ejc = {Europ.\ J.\ Comb.}}

@string{tams = {Trans.\ Am.\ Math.\ Soc.}}

@book{DiestelBook25,
	Author = {Diestel, Reinhard},
	Note = {\\ Electronic edition available at:\\ {\small\tt http://www.math.uni-hamburg.de/home/diestel/books/graph.theory}},
	Publisher = {Springer-Verlag},
	Title = {Graph {T}heory \emph{(6th edition)}},
	Year = {2025}}

@book{GR01,
	Author = {C.~Godsil and G.~Royle},
	Publisher = {Springer-Verlag},
	Title = {Algebraic Graph Theory},
	Year = {2001}}

@book{LyndonSchupp,
	Author = {Roger~C.~Lyndon  and Schupp, Paul E.},
	Isbn = {9783540411581},
	Language = {en},
	Month = jan,
	Publisher = {Springer Science \& Business Media},
	Title = {Combinatorial {Group} {Theory}},
	Year = {2001}}

@incollection{GroAsyInv,
	Author = {Gromov, M.},
	Title = {{Asymptotic invariants of infinite groups}},
	Booktitle = {{Geometric group theory, Vol.~2 (Sussex, 1991)}},
	Number = {182},
	Pages = {1--295},
	Publisher = {Camb.\ Univ.~Press},
	Series = {London Math.~Soc.~Lecture Note Ser.},
	Year = {1993},
}

@article{FujPapAsy,
	Author = {K.~Fujiwara and P.~Papasoglu},
	Title = {Asymptotic dimension of planes and planar graphs},
	volume = {374},
	journal = tams,
	year = {2021},
	pages = {8887--8901}
	}

@article{FujPapCoa,
	Author = {K.~Fujiwara and P.~Papasoglu},
	Title = {A COARSE-GEOMETRY CHARACTERIZATION OF CACTI},
	note = {{arXiv:2305.08512}},
	}

@article{GNRS,
	Author = {A.~Gupta and I.~Newman and Y.~Rabinovich and A.~Sinclair},
	Pages = {233--269},
	volume = {2},
	Title = {{Cuts, trees and $\ell_1$-embeddings of graphs}},
	journal = combi,
	Year = {2004},
}

@article{BBEGLPS,
	title = {Asymptotic {Dimension} of {Minor}-{Closed} {Families} and {Assouad}-{Nagata} {Dimension} of {Surfaces}},
	author = {Bonamy, M. and Bousquet, N. and Esperet, L. and Groenland, C. and Liu, C.-H. and Pirot, F. and Scott, A.},
	volume = {26},
	number = {10},
	journal = {J.\ Eur.\ Math.\ Soc.},
	year = {2023},
	pages = {3739--3791},
}

@article{GeoPapMin,
	title = {{Graph minors and metric spaces}},
	author = {A.~Georgakopoulos and P.~Papasoglu},
    journal = {Combinatorica},
    volume = {45},
    pages = {33},
	year = {2025}
}

@article{BerSeyBou,
	title = {Bounded diameter tree-decompositions},
	Journal = combi,
	author = {Berger, E. and Seymour, P.},
	volume = {44},
	number = {1},
	year = {2024},
	pages = {659--674}
}

@article{CDNRV,
	title = {Constant {Approximation} {Algorithms} for {Embedding} {Graph} {Metrics} into {Trees} and {Outerplanar} {Graphs}},
	volume = {47},
	number = {1},
	journal = {Discrete \& Computational Geometry},
	author = {Chepoi, V. and Dragan, F. F. and Newman, I. and Rabinovich, Y. and Vax\`es, Y.},
	year = {2012},
	pages = {187--214},
}

@article{LiuAss,
	title = {Assouad-{Nagata} dimension of minor-closed metrics},
	author = {Liu, C.-H.},
	Note = {{arXiv:2308.12273}},
}

@article{EsGiCoa,
	title = {Coarse geometry of quasi-transitive graphs beyond planarity},
	author = {Esperet, L. and Giocanti, U.},
	journal = ejc,
	volume = {31},
    number = 2,
    Pages = {P2.41},
	Year = {2024}
}

@article{DHIM,
	title = {Fat minors cannot be thinned (by quasi-isometries)},
	author = {Davies, J. and Hickingbotham, R. and Illingworth, F. and McCarty, R.},
    year = {2026},
    pages = {},
    volume = {14},
    journal = {Analysis and Geometry in Metric Spaces},
    number={1}
}

@article{McMFat,
	title = {Fat minors in finitely presented groups},
	author = {MacManus, J.},
    journal={Combinatorica},
    year = {2025},
    pages = {},
    volume = {45},
    number = {40},
}

@article{McMAcc,
	title = {{Accessibility, planar graphs, and quasi-isometries}},
	author = {MacManus, J.},
	note = {arXiv:2310.15242},
}

@article{AJKW,
    title = {A Characterisation of Graphs Quasi-isometric to ${K}_4$-minor-free Graphs},
	author = {Albrechtsen, S. and Jacobs, R. and Knappe, P. and Wollan, P.},
	year = {2025},
    pages = {},
    volume = {45},
    number = {61},
    journal = {Combinatorica},
}

@article{DJKK,
	title = {Canonical graph decompositions via coverings},
	author = {Diestel, R. and Jacobs, R.~W. and Knappe, P. and Kurkofka, J.},
	note = {arXiv:2207.04855},
}

@article{NgScSeAsyI,
	title = {{Asymptotic structure. I. Coarse tree-width}},
	author = {Nguyen, T. and Scott, A. and Seymour, P.},
	journal = {arXiv:2501.09839},
}

@article{NgScSeAsyII,
	title = {{Asymptotic structure. II. Path-width and additive quasi-isometry}},
	author = {Nguyen, T. and Scott, A. and Seymour, P.},
	Note = {Preprint 2024},
}

@article{ADGSmallCounterexamples,
    author = {Albrechtsen, S. and Distel, M. and Georgakopoulos, A.},
    title = {Small counterexamples to the fat minor conjecture},
    note = {arXiv:2601.05761},
}

@article{ADGK2t,
    author = {Albrechtsen, S. and Distel, M. and Georgakopoulos, A.},
    title = {{Excluding $K_{2,t}$ as a fat minor}},
    note = {arXiv:2510.14644},
}

@InProceedings{sphereDimSoCG,
  author =	{Davies, J. and Georgakopoulos, A. and Hatzel, M. and McCarty, R.},
  title =	{{Strongly Sublinear Separators and Bounded Asymptotic Dimension for Sphere Intersection Graphs}},
  booktitle =	{41st International Symposium on Computational Geometry (SoCG 2025)},
  pages =	{36:1--36:16},
  series =	{Leibniz International Proceedings in Informatics (LIPIcs)},
  year =	{2025},
  volume =	{332},
  editor =	{Aichholzer, O. and Wang, H.},
  address =	{Dagstuhl, Germany},
}

@article{DaviesStringGraphsQuasiPlanar,
	title = {String graphs are quasi-isometric to planar graphs},
	author = {Davies, J.},
	note = {arXiv:2510.19602},
}

@article{CCTZstring,
      title={{O(1)-Distortion Planar Emulators for String Graphs}}, 
      author={H.-C.~Chang and J.~Conroy and Z.~Tan and D.~W.~Zheng},
      note = {arXiv:2510.21700},
}

@article{BonHickInd,
  title = {Induced Minors and Region Intersection Graphs},
  author = {Bonnet, {\'E}. and Hickingbotham, R.},
  journal = {Innovations in Graph Theory},
  volume = {2},
  pages = {313--327},
  year = {2025},
}

@InProceedings{ABDDMRW,
    author = {Abrishami, T. and Bria{\'n}ski, M. and Davies, J. and Du, X. and Masa{\v{r}}{\'i}kov{\'a}, J. and Rz{\k{a}}{\.z}ewski, P. and Walczak, B.},
    title = {Burling Graphs in Graphs with Large Chromatic Number},
    booktitle = {Proceedings of the 2026 Annual ACM-SIAM Symposium on Discrete Algorithms (SODA)},
    pages = {3978-3998},
    year={2026}
}

@article{GeoVigCoa,
AUTHOR = {A.~Georgakopoulos and F.~Vigolo},
TITLE = {Triangulating surfaces quasi-isometrically},
	Note = {arXiv:2603.21189},
}

@InProceedings{LeeSep,
  author =	{J.~R.~Lee},
  title =	{{Separators in region intersection graphs}},
  booktitle =	{Proc.\ 8th Innovations in Theoretical Computer Science},
  pages =	{1:1--1:8},
  series =	{LIPIcs},
  year =	{2017},
  volume =	{67},
  editor =	{C.~H.~Papadimitriou},
  publisher =	{Schloss Dagstuhl},
}

@unpublished{AlbGeoBlo,
    author = {Albrechtsen, S.  and Georgakopoulos, A.},
    title = {A coarse block-cutvertex tree-decomposition},
    note = {arXiv:2607.07030},
}

@article {StrDualPath,
    author = {Albrechtsen, S. and Diestel, R. and Elm, A.-K. and Fluck, E. and Jacobs, R. W. and Knappe, P. and Wollan, P.},
    title = {A structural duality for path-decompositions into parts of small radius},
    journal = {Innovations in Graph Theory},
    volume = {3},
    pages = {207--246},
    year = {2026},
}

@misc{BLPPSep,
      title={{Coarse Balanced Separators in Fat-Minor-Free Graphs}}, 
      author={{\'E}.~Bonnet and H.~Le and Ma.~Pilipczuk and Mi.~Pilipczuk},
      note = {arXiv:2604.11318}, 
}

@article{LiuWeakCoarseMenger,
    author = {C.-H.~Liu},
    title = {{Coarse Menger property of quasi-minor excluded graphs and length spaces}},
    note = {arXiv:2605.10068}
}

@misc{DGHLMGallai,
      title={{A coarse Gallai theorem}}, 
      author={M.~Distel and U.~Giocanti and J.~Hodor and C.~Legrand-Duchesne and P.~Micek},
      note={arXiv:2601.18439},
      }

@article{NgScSeAsyVI,
	title = {{Asymptotic structure.\ VI.\ Distant paths across a disc}},
	author = {Nguyen, T. and Scott, A. and Seymour, P.},
	Note = {ArXiv:2509.07174},
}

@article{BlPiPrCoa,
      title={{A coarse Menger's Theorem for planar and bounded genus graphs}}, 
      author={V.~Blazej and M.~Pilipczuk and E.~Protopapas},
      note={arXiv:2605.11112}, 
}

@InProceedings{CCVHop,
    author="Catusse, N.
    and Chepoi, V.
    and Vax{\`e}s, Y.",
    editor="Scheideler, C.",
    title="Planar Hop Spanners for Unit Disk Graphs",
    booktitle="Algorithms for Sensor Systems",
    year="2010",
    publisher="Springer Berlin Heidelberg",
    pages="16--30",
}

@book{AlgDes,
    author = {Kleinberg, J. and 
    Tardos, E.},
    title = {Algorithm Design},
    publisher = {Pearson},
    year = {2005}
}

@article{MaMaSpAlm,
      title={Almost planar finitely presented groups}, 
      author={J.~M.~Mackay and J.~P.~MacManus and D.~Spriano},
    Note={arXiv:2605.03040}      
}

@article{BDLLTre,
title = {Tree spanners on chordal graphs: complexity and algorithms},
journal = {Theor.\ Comput.\ Sci.},
volume = {310},
number = {1},
pages = {329--354},
year = {2004},
author = {A.~Brandst\"adt and F.~F.~Dragan and H.-O.~Le and V.~B.~Le},
}

\end{document}